\definecolor{darkred}{rgb}{0.5,0,0}
\definecolor{darkgreen}{rgb}{0,0.5,0}
\definecolor{darkblue}{rgb}{0,0.3,0.8}
\numberwithin{equation}{section}
	\newcommand{\spmat}[1]{%
		\left(
		\let~=&
		\begin{smallmatrix}#1\end{smallmatrix}
		\right)
}}
\DeclareRobustCommand{\intprod}{
	\mathbin{\mathpalette\intprod@{(0.1,0)(0.9,0)(0.9,0.8)}}}
\newcommand{\intprod@}[2]{
	\begingroup
	\setbox0=\hbox{$\m@th#1+$}%
	\unitlength=\wd0
	\begin{picture}(1,1)
		\roundcap
		\linethickness{0.1\unitlength}
		\polyline#2%
	\end{picture}%
	\endgroup
}
\newtheorem*{exr*}{Exercise}
\newtheorem*{thm*}{Theorem}
\newtheorem{thm}{Theorem}[section]
\newtheorem{prop}[thm]{Proposition}
\newtheorem{lem}[thm]{Lemma}
\newtheorem{rem}[thm]{Remark}
\newtheorem*{Conjecture}{Conjecture}
\newtheorem*{Construction}{Construction}
\newtheorem{cor}[thm]{Corollary}
\newtheorem{defn}[thm]{Definition}
\newtheorem*{assump}{Assumption}
\theoremstyle{definition}
\newtheorem{example}[thm]{Example}
\newcommand{\xrightarrowdbl}[2][]{%
	\xrightarrow[#1]{#2}\mathrel{\mkern-14mu}\rightarrow
}
\newcommand{\Z}{\mathbb{Z}}
\newcommand{\R}{\mathbb{R}}
\newcommand{\Q}{\mathbb{Q}}
\newcommand{\C}{\mathbb{C}}
\newcommand{\bS}{\mathbb{S}}
\newcommand{\bb}{\boldsymbol{b}}
\newcommand{\bv}{\mathbf{v}}
\newcommand{\bw}{\mathbf{w}}
\newcommand{\CF}{\mathrm{CF}}
\newcommand{\HF}{\mathrm{HF}}
\newcommand{\bL}{\mathbb{L}}
\newcommand{\CP}{\mathbb{C}P}
\newcommand{\cF}{\mathscr{F}}
\newcommand{\cS}{\mathcal{S}}
\newcommand{\cH}{\mathcal{H}}
\newcommand{\bF}{\mathbf{F}}
\newcommand{\fr}{\textrm{fr}}
\newcommand{\Hom}{\mathrm{Hom}}
\newcommand{\End}{\mathrm{End}}
\newcommand{\Fuk}{\mathrm{Fuk}}
\newcommand{\Ker}{\mathrm{Ker}\,}
\renewcommand{\Im}{\mathrm{Im}\,}
\newcommand{\GL}{\mathrm{GL}}
\newcommand{\cE}{\mathcal{E}}
\newcommand{\tr}{\mathrm{tr} \,}
\newcommand{\one}{\mathbf{1}}
\newcommand{\bP}{\mathbb{P}}
\newcommand{\bM}{\mathcal{M}}
\newcommand{\bMC}{\mathcal{MC}}
\newcommand{\cA}{\mathcal{A}}
\newcommand{\cD}{\mathcal{D}}
\newcommand{\cL}{\mathcal{L}}
\newcommand{\cM}{\mathcal{M}}
\newcommand{\MF}{\mathrm{MF}}
\newcommand{\Id}{\mathrm{Id}}
\newcommand{\cG}{\mathscr{G}}
\title{Mirror construction of Hecke correspondence between Nakajima quiver varieties}
\author{Siu-Cheong Lau and Ju Tan}
\date{\today}
\begin{document}
	\begin{abstract}
	Nakajima constructed geometric representations of a deformed Kac-Moody Lie algebra using Hecke correspondences between quiver varieties. In this paper, we show that Hecke correspondences, which are holomorphic Lagrangians in products of Nakajima quiver varieties, can be obtained by applying the localized mirror construction to the morphism spaces between families of framed Lagrangian branes supported on the core of a plumbing of two-spheres. 
	Moreover, for a non-ADE quiver, we show that the localized mirror functor is fully-faithful.
	\end{abstract}
	\maketitle 
	{ \hypersetup{hidelinks} \tableofcontents }
\newpage
\section{Introduction}


Nakajima quiver varieties for affine ADE quivers were originally introduced as moduli spaces of anti–self-dual (ASD) instantons on ALE spaces and as moduli of framed torsion-free sheaves over their compactifications \cite{KN90,Nak94,Nak07}. 
Subsequently, these holomorphic symplectic varieties for general framed double quivers were found to play a central role in geometric representation theory, providing a powerful framework for realizing the representations of deformed Kac–Moody and quantum affine algebras via Hecke correspondences \cite{Nak98,Nak01}.



On the other hand, for a Lagrangian immersion $L$ in a symplectic manifold $M$, its noncommutative deformation space (which serves as a mirror of $M$ localized to $L$) was formulated as quiver algebras in \cite{CHL17,CHL21}. It is natural to ask whether Nakajima quiver varieties lie in this framework, motivated from SYZ mirror symmetry and family Floer theory \cite{SYZ96,Fuk02,Tu,Abouzaid17}. 

Our previous work \cite{HLT24} provided an affirmative answer to this question: 
we developed the notion of a framed Lagrangian brane, constructed explicit examples in dimension two and computed their stable deformation spaces. By analyzing the Floer theoretical obstructions, we showed that these deformation spaces are isomorphic to Nakajima quiver varieties.\footnote{More precisely, we need to work over the Novikov ring $\Lambda_{\geq 0}$ with formal parameter $T$ for convergence in Floer theory. On the other hand, we will identify with objects of $\cM(\bv,\bw)$ that only have polynomial dependence in $T$, so that we can set $T=e^{-1}$ to get back to the complex field $\C$.}

\begin{thm}[\cite{HLT24}]\label{thm: MC}
	Let $D$ be a non-directed graph and $Q$ the corresponding double quiver. Let $\bM(\bv,\bw)$ be the Nakajima quiver variety associated to the framed quiver $Q^\fr$, where $\bv$ and $\bw$ are the dimension vectors for $Q$ and the framing respectively.
	
	There exists a framed Lagrangian brane $(\bL^\fr,\cE)$ in a symplectic manifold $M$, where $\bL^\fr$ is a framed Lagrangian immersion and $\cE$ consists flat bundles of rank $\bv$ and $\bw$ over its compact and framing components respectively, such that its Maurer-Cartan deformation space $\bMC(\bL^{\fr},\cE)$ is isomorphic to the Nakajima quiver variety:
	$$ \bMC(\bL^{\fr},\cE) \cong \bM(\bv,\bw). $$
	In above, $\bMC(\bL^{\fr},\cE)$ is defined as the GIT quotient of the solution space of the Maurer-Cartan equation for deformations of $(\bL^{\fr},\cE)$ by the gauge group associated to $(\bL^{\fr},\cE)$, with the stability condition $\zeta$ adapted to the framing which has GIT character $0$ on the framing component and $1$ on the compact components. The Lagrangian immersion $\bL^\fr \subset M$ is constructed by plumbing of 
	$T^*\bS^2$ according to the graph $D$.
\end{thm}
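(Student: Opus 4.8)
The plan is to realize the entire Nakajima package geometrically from a plumbing, and then extract the quiver data via Floer theory. First I would \emph{construct $M$ and $\bL^\fr$}: take one copy of $T^*\bS^2$ for each vertex $i$ of $D$ and plumb $T^*\bS^2_i$ with $T^*\bS^2_j$ once for each edge joining $i$ and $j$ (self-plumbing $T^*\bS^2_i$ for a loop). The resulting open symplectic manifold $M$ carries the union of zero sections $\bL = \bigcup_i \bS^2_i$ as an immersed Lagrangian whose singularities are transverse double points, one per edge-plumbing. For the framing, attach to each vertex $i$ a non-compact component (a Lefschetz-thimble-type Lagrangian) meeting $\bS^2_i$ in a single transverse double point, which will carry the ADHM maps $i_i\colon \C^{\bw_i}\to \C^{\bv_i}$ and $j_i\colon \C^{\bv_i}\to\C^{\bw_i}$. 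Equipping each compact $\bS^2_i$ with the trivial flat $\C^{\bv_i}$-bundle and each framing component with $\C^{\bw_i}$ gives the brane $(\bL^\fr,\cE)$.

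Next I would \emph{identify the deformation algebra}. The relevant complex is $\CF^\bullet(\bL^\fr,\bL^\fr)$ with its $A_\infty$-structure $\{m_k\}$ deformed by the flat connections in $\cE$. Using the local model of $T^*\bS^2$, each edge-double-point $a$ contributes two degree-$1$ generators $X_a$, $X_{\bar a}$ (the two branches), with degree-$2$ output dual to the point class at the two relevant spheres; each framing double point contributes generators for $i_i$ and $j_i$. A general degree-$1$ deformation is $b = \sum_a B_a X_a + \sum_i(I_i\, i_i + J_i\, j_i)$ with $B_a\in\Hom(\C^{\bv_{s(a)}},\C^{\bv_{t(a)}})$. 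One then computes $m_1(b) + m_2(b,b) + m_3(b,b,b)+\cdots$: the contributing holomorphic polygons are the small low-area ones supported near the plumbing points, and the count shows that the coefficient of the degree-$2$ generator at vertex $i$ is $\sum_{a\colon t(a)=i}\varepsilon(a)\, B_a B_{\bar a} + i_i j_i$ for suitable signs $\varepsilon$. Hence the Maurer--Cartan equation is exactly Nakajima's complex moment-map equation $\mu_\C(B,i,j)=0$; convergence is handled over $\Lambda_{\geq 0}$ as in the footnote, with only finitely many polygon classes contributing modulo the relevant ideal, and $T=e^{-1}$ recovering the $\C$-points.

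Finally I would \emph{match the quotient}. The gauge group of $(\bL^\fr,\cE)$ is $\prod_i\GL(\bv_i)\times\prod_i\GL(\bw_i)$, and the stability datum $\zeta$ is taken with GIT character $0$ on the framing factors and $1$ on the compact factors; then $\zeta$-semistable Maurer--Cartan solutions are precisely Nakajima-stable ADHM data, and the effective quotient group is $\prod_i\GL(\bv_i)$ acting on framed representations. This is exactly Nakajima's presentation $\bM(\bv,\bw)=\mu_\C^{-1}(0)^{\zeta\text{-ss}}/\prod_i\GL(\bv_i)$, so $\bMC(\bL^\fr,\cE)\cong\bM(\bv,\bw)$.

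\textbf{Main obstacle.} The technical heart is the Floer computation in the second step: pinning down the $A_\infty$-operations on the plumbing precisely enough to see that the \emph{full} Maurer--Cartan equation (all $m_k$, not just $m_2$) reproduces $\mu_\C=0$ with the correct signs, that no spurious relations arise from $m_1$ or from large-area polygons, and that the whole count converges over the Novikov ring and specializes correctly. This disk-counting analysis, together with the verification that $\zeta$-stability of the brane matches Nakajima's stability condition, is what is carried out in \cite{HLT24}.
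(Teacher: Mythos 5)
Your proposal follows essentially the same route as \cite{HLT24}, which is summarized in Section \ref{sec: framed} of this paper: plumb copies of $T^*\bS^2$ according to $D$, read off the arrows of the framed double quiver from the immersed generators and the framing intersections, identify the Maurer--Cartan equation with $\mu_\C=0$, and match the GIT quotients under the framing-adapted stability condition. Two points deserve sharpening. First, the disk count does not directly produce the quadratic moment map: since $\bL^\fr$ is exact with constant primitive, the contributions to $m_0^b$ are constant polygons at the nodal points with arbitrarily many corners, so the raw obstruction at a node has the form $\bigl(1+\sum_j c_j(B_{\bar a}B_a)^j\bigr)B_aB_{\bar a}-\bigl(1+\sum_j c_j(B_aB_{\bar a})^j\bigr)B_{\bar a}B_a$ rather than the bare commutator-type expression. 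The essential input --- which you correctly flag as the ``main obstacle'' but do not supply --- is the local anti-symplectic involution symmetry of \cite[Lem.~3.4]{HKL23}, which forces the coefficients on the two branches to coincide, so that a gauge-equivariant formal change of coordinates reduces $m_0^b=0$ to the preprojective (moment-map) relations; without this symmetry the higher-order terms cannot in general be removed. Second, your treatment of the quotient is slightly off: a GIT quotient with trivial character on the $\prod_i\GL(\bw_i)$ factor still quotients by that factor in the affine direction, which would wrongly identify framings. The paper avoids this by declaring the gauge group on the framing component to be trivial from the outset (Definition \ref{def: flag}), so that only $\prod_i\GL(\bv_i)$ acts, exactly matching Nakajima's presentation.
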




The above result serves as the first step to categorify geometric representations of Kac-Moody algebras via Fukaya category, namely, it realizes the moduli spaces $\bMC(\bL^{\fr},\cE)$ of framed objects in the category as holomorphic symplectic manifolds. 

From this viewpoint, a natural next goal is to understand how the representation-theoretic structures encoded in Nakajima’s theory arise on the symplectic side. More precisely, in Nakajima’s theory, the module actions of Kac-Moody algebras are realized by Fourier-Mukai transformations between the derived category of coherent sheaves supported over holomorphic Lagrangians in $\bMC(\bL^{\fr},\cE)$. To carry out an analogous categorification on the symplectic side, a key missing ingredient is the symplectic counterparts of the correspondences that drive Nakajima’s construction. This leads to the following central question:

\begin{center}
	\emph{What are the structures corresponding to the Hecke correspondences between Nakajima quiver varieties in Lagrangian Floer theory?}
\end{center}

One of the main results of this paper is the following symplectic construction of the Hecke correspondence, which induces maps on the Borel–Moore homology of quiver varieties. These correspondences form the essential geometric input to construct actions of Kac-Moody algebras on the direct sum of Borel–Moore homology of quiver varieties. The theorem below realizes these correspondences by morphisms between framed Lagrangian branes.

\begin{thm}[Theorem \ref{thm:H}]
	Assume $D$ has no edge loops. Consider two branes $(\bL^\fr,\cE_j)$ for $j=1,2$ where $\cE_j$ are trivial bundles over $\bL^\fr$ of rank $(\bv^j,\bw)$ for $\bv^2 = \bv^1 + \mathbf{e}^k$ (meaning that $\cE_2$ has one higher rank than $\cE_1$ at the $k$-th component).  Then the support of the sheaf $\mathcal{H}$ over $\bMC(\bL^{\fr},\cE_2) \times \bMC(\bL^{\fr},\cE_1)$ formed by the Floer cohomology $\HF^2((\bL^\fr,\cE_2),(\bL^\fr,\cE_1))$ is a holomorphic Lagrangian subvariety 
	$$\mathfrak{P}_k \subset \bMC(\bL^{\fr},\cE_2) \times \bMC(\bL^{\fr},\cE_1),$$
	which is isomorphic with the Hecke correspondence in $\cM(\bv^2,\bw) \times \cM(\bv^1,\bw)$.
	Moreover, 
	$$\mathcal{H} = \iota_* \mathcal{L}_{\mathfrak{P}_k} $$ 
	where $\iota: \mathfrak{P}_k \to \bMC(\bL^{\fr},\cE_2) \times \bMC(\bL^{\fr},\cE_1)$ is the inclusion map and $\mathcal{L}_{\mathfrak{P}_k}$ is a line bundle over $\mathfrak{P}_k$.
\end{thm}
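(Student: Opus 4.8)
The plan is to compute the stalks of $\mathcal{H}$ one point at a time and then recognize them as the fibers of a line bundle over a holomorphic Lagrangian. By Theorem~\ref{thm: MC} a point of $\bMC(\bL^\fr,\cE_j)$ is represented by a $\zeta$-stable framed quiver representation $x^j=(B^j,i^j,j^j)$ of dimension $(\bv^j,\bw)$ solving the preprojective relations, under an identification that also matches the complex (indeed holomorphic symplectic) structures; so a point of the product corresponds to a pair $(x^2,x^1)$ with $\bv^2=\bv^1+\mathbf{e}^k$, together with bounding cochains $b_2,b_1$. I would first write out, following \cite{HLT24}, the Floer cochain complex $\CF^\bullet\big((\bL^\fr,\cE_2,b_2),(\bL^\fr,\cE_1,b_1)\big)$ from the plumbing geometry: its generators are the intersection and self-intersection points of $\bL^\fr$ together with the fundamental and point classes of the compact $\bS^2$-components and the classes of the framing components, each tensored with the relevant $\Hom$-space of the flat bundles, identified with the linear maps between the spaces $V^j_v$. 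Since $\bL^\fr$ is a Lagrangian surface inside the $4$-manifold $M$, this complex lives in degrees $0,1,2$, with $\CF^0\cong\bigoplus_v\Hom(V^2_v,V^1_v)$, $\CF^1$ built from the edge- and framing-intersections, and $\CF^2\cong\bigoplus_v\Hom(V^2_v,V^1_v)$; since $\CF^{\geq 3}=0$,
\[
\HF^2\big((\bL^\fr,\cE_2,b_2),(\bL^\fr,\cE_1,b_1)\big)=\operatorname{coker}\big(\mathfrak{m}_1^{b_2,b_1}\colon\CF^1\to\CF^2\big),
\]
which is the stalk of $\mathcal{H}$ at $(x^2,x^1)$.

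Next I would evaluate the deformed differential $\mathfrak{m}_1^{b_2,b_1}=\sum_{p,q}\mathfrak{m}_{p+q+1}(b_2^{\otimes p},-,b_1^{\otimes q})$ explicitly. The holomorphic strips and discs that contribute are governed by the same disc data of the plumbing of $T^*\bS^2$'s already determined in \cite{HLT24} when computing the potential and the preprojective relations; the hypothesis that $D$ has no edge loops keeps these contributions quadratic (no self-edge at a vertex, hence no extra self-intersection of $\bL^\fr$ and no higher curve corrections), so that $\mathfrak{m}_1^{b_2,b_1}$ is assembled linearly from $B^2,B^1,i^1,i^2,j^1,j^2$. Carrying out the count identifies the three-term complex $\big(\CF^0\to\CF^1\to\CF^2\big)$, fiberwise over $(x^2,x^1)$, with Nakajima's tautological complex $\cC^\bullet_{\bv^2,\bv^1}$ on $\cM(\bv^2,\bw)\times\cM(\bv^1,\bw)$ used to cut out the Hecke correspondence (possibly after a cohomological shift) — the complex whose terms are $\bigoplus_v\Hom(V^2_v,V^1_v)$, the arrow- and framing-homomorphism spaces, and $\bigoplus_v\Hom(V^2_v,V^1_v)$ again, with differentials $\sigma,\tau$ built from $B^2,B^1,i,j$, which form a complex precisely because of the preprojective relations satisfied by $x^1$ and $x^2$.

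It then remains to invoke the known behaviour of $\cC^\bullet_{\bv^2,\bv^1}$ \cite{Nak98,Nak01}: for $\zeta$-stable $x^1,x^2$ the map $\sigma$ is injective and $\tau$ is surjective except over a closed subvariety — exactly the Hecke correspondence $\mathfrak{P}_k\subset\cM(\bv^2,\bw)\times\cM(\bv^1,\bw)$, a smooth holomorphic Lagrangian of half the dimension of the product — over which $\operatorname{coker}\tau$ is one-dimensional and canonically the tautological line (the line recording the one-dimensional kernel/cokernel of the incidence at the vertex $k$). Transporting this along the isomorphism of Theorem~\ref{thm: MC}, one gets $\operatorname{supp}\mathcal{H}=\mathfrak{P}_k$, and the constant-rank-one stalks glue to a holomorphic line bundle, so $\mathcal{H}=\iota_*\mathcal{L}_{\mathfrak{P}_k}$ with $\mathcal{L}_{\mathfrak{P}_k}$ the tautological line bundle (up to dual or twist). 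Holomorphicity of $\mathcal{H}$ follows from the holomorphic dependence of the family of deformed Floer differentials on $(b_2,b_1)$ for the complex structure of Theorem~\ref{thm: MC}, and $\mathfrak{P}_k$ acquires its natural scheme structure as the Fitting support of $\mathcal{H}$, equivalently as the degeneracy locus of $\mathfrak{m}_1^{b_2,b_1}$ in top degree.

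The \textbf{main obstacle} I anticipate is the faithful evaluation of the deformed differential $\mathfrak{m}_1^{b_2,b_1}$ on the plumbing — counting holomorphic strips and discs with boundary on the immersed components while correctly bookkeeping the self-intersection (corner) inputs from $b_2,b_1$ — and showing that it matches $\sigma,\tau$ on the nose, including all signs and the homological placement of the terms. This is the relative analogue of the potential and preprojective-relation computations of \cite{HLT24}, and the no-edge-loop hypothesis is what keeps it tractable by ruling out self-intersections that would introduce further generators and curve corrections; even so, the identification with $\cC^\bullet_{\bv^2,\bv^1}$, and in particular the constancy of the rank of $\mathfrak{m}_1^{b_2,b_1}$ along $\mathfrak{P}_k$, genuinely uses the $\zeta$-stability of $x^1$ and $x^2$. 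A secondary subtlety is matching conventions so that the support comes out as the intended correspondence $\mathfrak{P}_k$ rather than its transpose, and $\mathcal{L}_{\mathfrak{P}_k}$ as the expected tautological line bundle.
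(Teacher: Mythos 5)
Your overall strategy---realize the Floer complex between the two branes as a three-term monad of tautological bundles over the product of Maurer--Cartan spaces and read off $\HF^2$ as a torsion sheaf supported on the Hecke correspondence---is the same as the paper's, but the two steps you flag as "to be carried out" are exactly where the content lies, and as written both contain genuine gaps. First, the role of the no-edge-loop hypothesis is misidentified. You claim it "keeps these contributions quadratic\dots no higher curve corrections," so that $m_1^{b_2,b_1}$ is linear in $B,i,j$. It does not: even without edge loops, constant polygons at the immersed points contribute higher-order terms, and the second differential has the form $(1+\sum_j c_j(B_{\bar{2}}B_{2})^j)B_{\bar{2}}\eta-\eta(1+\sum_j c_j(B_{\bar{1}}B_{1})^j)B_{\bar{1}}+\cdots$ before simplification. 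The paper removes these by a change of coordinates whose existence rests on the local anti-symplectic involution symmetry (equality of the coefficients $c_j$ on both sides); this coordinate change is part of the statement of Theorem \ref{thm:H}, not a convention issue. The no-edge-loop hypothesis is instead what guarantees the resulting subvariety has the correct dimension. Without this step your complex is not yet Nakajima's.

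Second, and more seriously, the key identification is delegated to a citation that does not carry it. The Floer complex has an extra degree-zero summand $L^0(F,F)\cong\C$ coming from the framing Lagrangian (your explicit $\CF^0\cong\bigoplus_v\Hom(V^2_v,V^1_v)$ omits it), and Nakajima's unframed tautological complex does not have this summand; its behaviour is therefore not directly quotable here. It is precisely this extra scalar $d$ that makes the degree-zero kernel of the dual complex equal to the space of pairs $(\xi,d)$ with $B_2\xi=\xi B_1$, $j_2\xi=dj_1$, $\xi i_1=i_2d$, i.e.\ framed homomorphisms after normalizing $d=1$. The paper's proof accesses this by passing to the fiberwise dual $\CF^\bullet((\bL^\fr,\cE_1),(\bL^\fr,\cE_2))$, identifying $\Ker\sigma$ with $\Hom$ of framed quiver representations via Proposition \ref{prop: Hom-Ext}, and then applying the stability condition twice: if $d=0$ then $\Im\xi$ is a $B_2$-invariant subspace of $\Ker j_2$, forcing $\xi=0$; and for $d\neq0$, $\Ker\xi$ is a $B_1$-invariant subspace of $\Ker j_1$, forcing injectivity. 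One-dimensionality of the fibers (hence the line bundle structure) follows from a further uniqueness argument: two kernel elements $(\xi_1,d)$, $(\xi_2,d)$ differ by $(\xi_1-\xi_2,0)$, which must vanish. These short arguments are the proof; your proposal needs them written out rather than attributed to the known behaviour of a different complex.
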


Let us briefly explain the proof. By definition, the Hecke correspondence is a subvariety in the product space $\cM(\bv^1,\bw) \times \cM(\bv^2,\bw)$ that parametrizes pairs of representations $(V^1,V^2)$ in the product together with an injective morphism $V^1 \xhookrightarrow[]{} V^2$. We consider the Floer complex $\CF^*((\bL^\fr,\cE_2),(\bL^\fr,\cE_1))$, which was observed to be isomorphic with Nakajima's generalized monadic complex in \cite{HLT24}. For fixed $b_1$ and $b_2$, $\HF^2((\bL^\fr,\cE_2,b_2),(\bL^\fr,\cE_1,b_1))$ 
is dual to 
$\HF^0((\bL^\fr,\cE_1,b_1),(\bL^\fr,\cE_2,b_2))$.
Besides, we will show that 
$\HF^0((\bL^\fr,\cE_1,b_1),(\bL^\fr,\cE_2,b_2))$ 
is isomorphic to the morphism space of the corresponding framed quiver representations. 
A morphism in $\HF^0$ is nonzero precisely when the morphism 
between the framing vertices is nontrivial; 
furthermore, by the stability condition $\zeta$ stated in Theorem \ref{thm: MC}, a nonzero morphism is injective. Hence, after an appropriate change of coordinates, the nonzero locus of $\HF^0$ (equivalently its dual $\HF^2$) parametrizes the space of injective morphisms between framed representations—that is, the Hecke correspondence.

Next, we consider sheaves that are supported over holomorphic Lagrangians in $\bMC(\bL^{\fr},\cE)$. As a GIT quotient at the stability condition $\zeta$, it has a projective morphism $$\pi:\bMC(\bL^{\fr},\cE)\to \bMC_0(\bL^{\fr},\cE)$$
where $\bMC_0(\bL^{\fr},\cE)$ is the GIT quotient with the trivial stability condition. 
By Theorem \ref{thm: MC} and the result of Nakajima, the preimage of trivial representation $\pi^{-1}([0])$ is a holomorphic Lagrangian subvariety. We verify that $\pi^{-1}([0])$ can be produced by the mirror functor applied to the framing Lagrangian $F$ in the ADE case.

\begin{thm}[Theorem \ref{thm: lag}, Corollary \ref{cor: hlag}]
	Let $D$ be a graph, and $F$ be the framing component of $\bL^\fr$. The support of the sheaf over $\bMC(\bL^{\fr},\cE)$ whose fibers are $\HF^2((\bL^\fr,\cE,b),F)$ forms a subvariety in $\bMC(\bL^{\fr},\cE)$ defined by $i=0$. 
	
	Moreover, if $D$ is an ADE Dynkin diagram, $$\HF^2((\bL^\fr,\cE),F) \cong \iota_* \mathcal{O}_{L(\bv)} $$ 
	where $L(\bv) \subset \bMC(\bL^\fr,\cE)$ is the holomorphic Lagrangian subvariety $\pi^{-1}([0])$ and $\iota: L(\bv) \to \bMC(\bL^{\fr},\cE)$ is the inclusion map.
\end{thm}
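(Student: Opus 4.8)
The plan is to mirror the strategy sketched above for the Hecke correspondence: realize the Floer complex as a monadic complex through \cite{HLT24}, use Floer Poincar\'e duality to turn $\HF^2$ into a $\Hom$-space whose support is read off by linear algebra, and then upgrade the fiberwise picture to an isomorphism of coherent sheaves. First I would observe that, under the dictionary of Theorem~\ref{thm: MC}, the sub-brane $F\subset\bL^\fr$ corresponds to the framed quiver representation $R_F$ of dimension vector $(0,\bw)$ with all structure maps zero, so that \cite{HLT24} identifies $\CF^*\big((\bL^\fr,\cE,b),F\big)$ with Nakajima's generalized monadic complex for the ordered pair $\big(R(b),R_F\big)$, where $R(b)$ denotes the representation corresponding to $(\bL^\fr,\cE,b)$. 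Since the ambient manifold is real four-dimensional, Floer Poincar\'e duality gives $\HF^2\big((\bL^\fr,\cE,b),F\big)\cong\HF^0\big(F,(\bL^\fr,\cE,b)\big)^{\vee}$, and, again by the \cite{HLT24} computation, the right-hand side is the space of homomorphisms $R_F\to R(b)$ of modules over the deformed preprojective algebra of $Q^\fr$.

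For the support statement, valid for an arbitrary graph $D$, I would compute this $\Hom$-space directly. Since $R_F$ has vanishing compact part, a homomorphism $R_F\to R(b)$ is determined by its framing-vertex components, and compatibility with the framing arrow forces them into $\ker i$; a careful analysis, using that $F$ carries the prescribed rank-$\bw$ framing flat bundle, shows that this space is one-dimensional when $i=0$ and zero otherwise. Consequently the family sheaf $\mathcal{H}_F:=\HF^2\big((\bL^\fr,\cE),F\big)$ over $\bMC(\bL^\fr,\cE)$ is pushed forward from the zero locus $\{i=0\}$ of the tautological framing map, with one-dimensional stalks there; thus $\mathcal{H}_F=\iota_*\mathcal{L}$ for a line bundle $\mathcal{L}$ on the subvariety $\{i=0\}$, proving the first assertion.

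Now suppose $D$ is an ADE Dynkin diagram. On $\{i=0\}$ the complex moment-map equation reduces to the preprojective relation, so $(V,B)$ becomes a module over $\Pi(Q)$; as $\Pi(Q)$ is finite-dimensional for Dynkin $Q$, every arrow acts nilpotently, every trace of a $B$-cycle vanishes, and with $i=0$ all $\mathrm{GL}(\bv)$-invariants vanish, giving $\{i=0\}\subseteq\pi^{-1}([0])$. For the reverse inclusion, if $\pi([R])=[0]$ then every invariant $j\circ p(B)\circ i$ vanishes, so the $B$-submodule of $V$ generated by $\mathrm{Im}\,i$ lies in $\ker j$ and is $B$-invariant, hence zero by the stability $\zeta$ of Theorem~\ref{thm: MC}, forcing $i=0$. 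Thus $\{i=0\}=\pi^{-1}([0])=L(\bv)$, already known to be a holomorphic Lagrangian. It remains to trivialize $\mathcal{L}$: the one-dimensional stalks of $\mathcal{H}_F$ are spanned by the canonical framing-preserving homomorphism $R_F\to R(b)$, which varies algebraically and vanishes nowhere on $L(\bv)$, so dualizing gives $\mathcal{L}\cong\mathcal{O}_{L(\bv)}$ and hence $\HF^2\big((\bL^\fr,\cE),F\big)\cong\iota_*\mathcal{O}_{L(\bv)}$, as claimed.

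The main obstacle I anticipate is the passage from fiberwise Floer cohomology to the sheaf-level isomorphism over the possibly singular and non-reduced variety $L(\bv)$: one must show the stalk dimension is constant along $L(\bv)$ --- so that $\mathcal{H}_F$ is genuinely a line bundle there and not merely a torsion sheaf --- and one must produce a global, monodromy-free generator in order to conclude that the line bundle is \emph{trivial}, not merely locally free of rank one. A subsidiary, more bookkeeping point is to check that the Floer Poincar\'e pairing used above is intertwined under the \cite{HLT24} dictionary with the holomorphic-symplectic Serre pairing on the quiver variety, since the reduction of $\HF^2$ to a $\Hom$-space rests on that compatibility.
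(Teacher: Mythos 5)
Your proposal follows essentially the same route as the paper's: compute the two‑term Floer complex between $(\bL^\fr,\cE)$ and $F$, pass through the fiberwise duality $\HF^2((\bL^\fr,\cE,b),F)\cong \HF^0(F,(\bL^\fr,\cE,b))^{\vee}$ so that the support is read off from the kernel of $\sigma(\xi)=\sum_k (i_k\xi)I_k$, and in the ADE case identify $\{i=0\}$ with $\pi^{-1}([0])$ via nilpotency of representations of the Dynkin preprojective algebra (your appeal to finite‑dimensionality of $\Pi(Q)$ is equivalent to the paper's citation of Lusztig's Proposition 14.2 / Nakajima's Proposition 6.7). Two points of comparison. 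First, you prove the inclusion $\pi^{-1}([0])\subseteq\{i=0\}$ explicitly, using that the invariant ring of $\cM_0$ is generated by traces of cycles and the entries of $j\,p(B)\,i$, so that their vanishing plus the stability condition forces $i=0$; the paper delegates this direction to Nakajima's Lemma 5.9, so your argument is a welcome elaboration rather than a deviation. Second, your support computation leans on ``$F$ carries the prescribed rank-$\bw$ framing flat bundle,'' and as stated this does not yield your conclusion: if $\cE|_F$ genuinely had rank greater than one, the kernel of $\xi\mapsto i\xi$ would be nonzero precisely on the non‑injectivity locus of $i$ (not on $\{i=0\}$), and over $\{i=0\}$ its fibers would have dimension $(\dim W)^2$ rather than one. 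The theorem is proved under the convention of Definition \ref{def: flag} and the Construction of Section \ref{sec:Hecke}: the framing component carries a rank‑one bundle and $\bw$ records the number of intersection points of $F$ with each sphere, so that $L^0(F,F)=L^2(F,F)=\C$; under that convention your dichotomy (one‑dimensional fiber if $i=0$, zero otherwise) is correct, the cokernel of $\tau$ is literally $\cO/(\text{entries of }i)$, and the generator $P_F$ supplies the global trivialization whose existence you flagged as the main obstacle.
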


The assumption that $D$ is an ADE Dynkin diagram is necessary to ensure that the resulting subvarieties are holomorphic Lagrangians. For general graphs, the subvariety defined by $i=0$ may have higher dimension, see Example \ref{ex: cexample}.

Moreover, this work fits into the general framework of Homological Mirror Symmetry conjecture proposed by Kontsevich \cite{Kon95}, which predicts an equivalence between the Fukaya category of a symplectic manifold and the derived category of coherent sheaves on its mirror.

In our setting, by applying the localized mirror functor, we establish a local homological mirror symmetry equivalence for the universal brane $(\bL,\bb)$.

\begin{thm}[Theorem \ref{thm:qiso}]
	Let $M$ be the plumbing of cotangent bundles $T^*\bS^2$ according to a non-Dynkin diagram $D$, and $\bL \subset M$ be the zero section. Then the noncommutative deformation space $\cA_\bL$ is a 2-Calabi-Yau algebra, and the localized mirror functor $\cF^{(\bL,\bb)}(\bL,\bb)$ gives a self-dual projective resolution of $\cA_\bL$ as an $\cA_\bL$-bimodule. 
	
	Moreover, the localized mirror functor $$\cF^{(\bL,\bb)}:\Fuk^{sub}(M) \to \mathrm{Perf}(\cA_\bL)$$ induces a quasi-equivalence, where $\Fuk^{sub}(M)$ is the Fukaya subcategory split generated by $(\bL,\bb)$ and and $\mathrm{Perf}(\cA_\bL)$ is the dg category of perfect $\cA_\bL$-modules.
\end{thm}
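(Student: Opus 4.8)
The plan is to make $\cA_\bL$ explicit as a preprojective algebra, recognize $\cF^{(\bL,\bb)}(\bL,\bb)$ as its canonical self-dual bimodule resolution, and conclude by a split-generation argument.

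\emph{Step 1 (the mirror algebra).} First I would pin down $\cA_\bL$ explicitly. Applying the localized mirror construction to the zero section $\bL$ of the plumbing --- the unframed counterpart of the computation behind Theorem~\ref{thm: MC}, carried out in \cite{HLT24} --- one finds that $\cA_\bL$ is the (completed) preprojective algebra $\Pi_D$ of the double quiver $Q$ of $D$: its quiver has one vertex per two-sphere component of $\bL$, a pair of opposite arrows $a,a^{*}$ for every edge of $D$ coming from the two immersed generators at the corresponding plumbing point, and its relations are the cyclic derivatives of the standard preprojective potential $W$, which encode the Floer $A_\infty$-products among these generators. Since $D$ is non-Dynkin, $\Pi_D$ is infinite dimensional and, by the classical structure theory of preprojective algebras, is a $2$-Calabi--Yau algebra; this is the first assertion.

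\emph{Step 2 (the self-dual resolution).} The localized mirror functor sends $(\bL,\bb)$ to $\cF^{(\bL,\bb)}(\bL,\bb)=\bigl(\CF^{*}((\bL,\bb),(\bL,\bb)),\,m_{1}^{\bb}\bigr)$, a complex of $\cA_\bL$-bimodules, free on the self-Floer generators of $\bL$. Sorting these by Morse index puts them in degrees $0,1,2$, with terms $\cA_\bL\otimes_{S}\cA_\bL$ in degrees $0$ and $2$ (the unit and point classes of the components) and $\cA_\bL\otimes_{S}\OL{Q}_1\otimes_{S}\cA_\bL$ in degree $1$ (the immersed generators), where $S=\bigoplus_{\text{vertices}}\C$ and $\OL{Q}_1$ is the $S$-bimodule spanned by the arrows of $Q$. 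The $\bb$-deformed $A_\infty$-operations give the differential, which one identifies with the differential built from the cyclic derivatives of $W$; thus $\cF^{(\bL,\bb)}(\bL,\bb)$ is the Ginzburg-type bimodule complex of $(Q,W)$. Poincar\'e duality for Floer cohomology on $\bL$ (here $\dim_{\R}\bL=2$), $\CF^{k}\cong(\CF^{2-k})^{\vee}$, is compatible with the bimodule structure and descends to an isomorphism $\cF^{(\bL,\bb)}(\bL,\bb)^{\vee}\cong\cF^{(\bL,\bb)}(\bL,\bb)[2]$ --- the asserted self-duality. What is left is to check that this complex is a resolution of the diagonal bimodule $\cA_\bL$, equivalently that $\HF^{*}((\bL,\bb),(\bL,\bb))$ is concentrated in degree $0$; this is precisely where the non-Dynkin hypothesis is indispensable, since exactly for non-Dynkin $D$ the algebra $\Pi_D$ admits a minimal projective bimodule resolution of length two of this self-dual form, and our complex --- self-dual, of the correct shape, with $H^{0}=\cA_\bL$ --- is then quasi-isomorphic to it. (Conversely, producing such a resolution re-proves that $\cA_\bL$ is $2$-Calabi--Yau.)

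\emph{Step 3 (quasi-equivalence, and the main obstacle).} By Step~2 the complex $\cF^{(\bL,\bb)}(\bL,\bb)$ is quasi-isomorphic in $\mathrm{Perf}(\cA_\bL)$ to the free module $\cA_\bL$, so $\mathrm{RHom}_{\cA_\bL}\bigl(\cF^{(\bL,\bb)}(\bL,\bb),\cF^{(\bL,\bb)}(\bL,\bb)\bigr)\simeq\cA_\bL$, concentrated in degree $0$; by construction the map on morphism complexes that the mirror functor induces at the object $(\bL,\bb)$ is the canonical identification $\HF^{*}((\bL,\bb),(\bL,\bb))=\cA_\bL\xrightarrow{\sim}\mathrm{End}_{\cA_\bL}(\cA_\bL)=\cA_\bL$, so $\cF^{(\bL,\bb)}$ is cohomologically full and faithful on $(\bL,\bb)$. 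Since $(\bL,\bb)$ split-generates $\Fuk^{sub}(M)$ by definition, $\cF^{(\bL,\bb)}(\bL,\bb)\simeq\cA_\bL$ split-generates $\mathrm{Perf}(\cA_\bL)$, and $\cF^{(\bL,\bb)}$ is exact, the standard d\'evissage --- the full subcategory on which an exact functor is full and faithful is triangulated and closed under retracts, hence is the whole split-closure of $(\bL,\bb)$ --- shows that $\cF^{(\bL,\bb)}$ is full and faithful throughout $\Fuk^{sub}(M)$ and essentially surjective onto $\mathrm{Perf}(\cA_\bL)$, i.e.\ a quasi-equivalence. The main obstacle is Step~2: identifying the $\bb$-deformed Floer differential on the self-intersection complex with the cyclic-derivative differential of the preprojective potential is a genuine $A_\infty$-computation on the plumbing, and exactness of the resulting complex genuinely fails outside the non-Dynkin case --- for Dynkin $D$ the preprojective algebra is finite dimensional, not $2$-Calabi--Yau, and $\HF^{*}((\bL,\bb),(\bL,\bb))$ acquires classes above degree $0$. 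Tracking Novikov convergence and completions throughout is a further, more routine, technical point.
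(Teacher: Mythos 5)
Your proposal is correct and follows essentially the same route as the paper: identify $\cA_\bL$ with the non-Dynkin preprojective algebra, match the complex $\cF^{(\bL,\bb)}(\bL,\bb)$ computed in the Morse model with the known self-dual length-two projective bimodule resolution (the cyclic-derivative/Ginzburg-type complex), and conclude full faithfulness at the generator plus split generation. Two small caveats: exactness must come from the explicit term-by-term identification of the Floer differentials $\sigma,\tau$ with those of the standard resolution (as the paper does against \cite{CBK22}) rather than from ``self-dual, correct shape, $H^0=\cA_\bL$'' alone, which would not by itself rule out higher cohomology; and the paper proves the stronger statement that $\CF^*((\bL,\bb),U)\to\Hom_{\cA_\bL}(\cF^{(\bL,\bb)}(\bL,\bb),\cF^{(\bL,\bb)}(U))$ is a quasi-isomorphism for arbitrary $U$ via a double-complex spectral sequence (used later for Corollary \ref{cor: T}), whereas you verify it only at the generator, which suffices for the quasi-equivalence as stated.
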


This quasi-equivalence can be viewed as a local realization of Kontsevich’s Homological Mirror Symmetry conjecture, in which the Fukaya subcategory generated by the universal brane corresponds to a local chart of the mirror, represented by the noncommutative algebra $\cA_\bL$.

In addition, we also obtain the following three-dimensional analogue.

\begin{prop}[Proposition \ref{prop: 3d}]
	Let $\bL$ be a compact, relatively spin, oriented Lagrangian immersion of dimension 3 and $\cA_\bL$ be its noncommutative deformation space. If $\cA_\bL$ is a 3-Calabi-Yau algebra, then the localized mirror functor $\cF^{(\bL,\bb)}(\bL,\bb)$ gives a self-dual projective resolution of $\cA_\bL$ as an $\cA_\bL$-bimodule. 
	
	Moreover, the localized mirror functor $$\cF^{(\bL,\bb)}:\Fuk^{sub}(M) \to \mathrm{Perf}(\cA_\bL)$$ induces a quasi-equivalence.
\end{prop}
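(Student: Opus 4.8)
The plan is to follow the proof of Theorem~\ref{thm:qiso} essentially verbatim, keeping careful track of which steps there relied on the explicit plumbing geometry of the $T^*\bS^2$'s and which used only the hypotheses now in force (compactness, relative spinness, orientability of $\bL$, and the $3$-Calabi--Yau property of $\cA_\bL$). First I would set up the localized mirror functor of \cite{CHL17,CHL21}: since $\bL$ is compact, deform it by the universal weak bounding cochain $\bb$ valued in $\cA_\bL^{1}$, so that the curvature $m_0^{\bb}$ is by definition the potential $W\in\cA_\bL$. When $\cA_\bL$ is $3$-Calabi--Yau it is in particular homologically smooth and its Maurer--Cartan deformations are unobstructed, so $W=0$ (parity considerations special to the odd dimension force this, exactly as in the $2$-dimensional case). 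Hence $\cF^{(\bL,\bb)}$ is a genuine, non-curved $\AI$-functor $\Fuk^{sub}(M)\to\mathrm{Perf}(\cA_\bL)$, sending a brane $\cL$ to $\CF^*((\bL,\bb),\cL)$ regarded as a one-sided dg $\cA_\bL$-module via the $\AI$-operations with $\bb$-inputs.

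Second, I would analyze the image of the generating object. By construction $\cF^{(\bL,\bb)}(\bL,\bb)$ is $\CF^*((\bL,\bb),(\bL,\bb))$, and deforming \emph{both} slots by the universal $\bb$ makes it a bounded complex of free $\cA_\bL$-bimodules, bounded because $\bL$ is compact. The key claim is that this complex $P^{\bullet}:=\cF^{(\bL,\bb)}(\bL,\bb)$ is quasi-isomorphic, as a complex of $\cA_\bL$-bimodules, to the diagonal bimodule $\cA_\bL$; equivalently, it is a finite projective bimodule resolution of $\cA_\bL$. The comparison map is the canonical inclusion of the subcomplex spanned by the identity morphisms, which induces $\cA_\bL\to P^{\bullet}$; that this is a quasi-isomorphism expresses that the localized mirror functor ``is the identity on $\bL$''. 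In the $2$-dimensional case this followed from identifying $\cA_\bL$ with a preprojective algebra and $\CF^*((\bL,\bb),(\bL,\bb))$ with the derived preprojective algebra, which is concentrated in cohomological degree $0$ precisely when $D$ is non-Dynkin; here $\bL$ is abstract and $\HF^*((\bL,\bb),(\bL,\bb))$ is genuinely spread over degrees $0,1,2,3$, so I would instead build the resolution abstractly from the $3$-Calabi--Yau structure. Concretely, a homologically smooth $3$-Calabi--Yau algebra admits a canonical self-dual length-$3$ projective bimodule resolution built from its (derived) superpotential, in the sense of Ginzburg and Van den Bergh, and I would identify its differentials with the higher products $m_{k}^{\bb,\dots,\bb}$, using that $\bL$ is oriented of dimension $3$ so that $\CF^*((\bL,\bb),(\bL,\bb))$ carries a cyclic $\AI$-structure of dimension $3$ (Poincar\'e duality for compact, oriented, relatively spin Lagrangians). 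The same Poincar\'e duality pairing $\HF^{k}((\bL,\bb),(\bL,\bb))\otimes\HF^{3-k}((\bL,\bb),(\bL,\bb))\to\C$, being compatible with the $\AI$- and bimodule structures, then upgrades on $P^{\bullet}$ to an isomorphism of complexes of bimodules $P^{\bullet}\cong \mathrm{RHom}_{\cA_\bL\text{-}\cA_\bL}\!\bigl(P^{\bullet},\,\cA_\bL\otimes\cA_\bL\bigr)[3]$, which is the asserted self-dual projective resolution (and re-derives that $\cA_\bL$ is $3$-Calabi--Yau, consistent with the hypothesis).

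Third, the quasi-equivalence. Since $\Fuk^{sub}(M)$ is by definition split-generated by $(\bL,\bb)$ and $\cF^{(\bL,\bb)}$ is an $\AI$-functor, full faithfulness reduces to checking it is a quasi-isomorphism on the endomorphism algebra of $(\bL,\bb)$, i.e. that $\CF^*((\bL,\bb),(\bL,\bb))\to\mathrm{RHom}_{\cA_\bL}(P^{\bullet},P^{\bullet})$ is a quasi-isomorphism; since $P^{\bullet}$ is the projective bimodule resolution of $\cA_\bL$, the right-hand side is identified with $\CF^*((\bL,\bb),(\bL,\bb))$ tautologically through the construction of the functor. For essential surjectivity, $P^{\bullet}\simeq\cA_\bL$ split-generates $\mathrm{Perf}(\cA_\bL)$ and is the image of $(\bL,\bb)$, so $\cF^{(\bL,\bb)}$ hits a split-generator; together with full faithfulness this yields the quasi-equivalence $\cF^{(\bL,\bb)}:\Fuk^{sub}(M)\xrightarrow{\ \sim\ }\mathrm{Perf}(\cA_\bL)$.

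The main obstacle is the second step: showing that $\cF^{(\bL,\bb)}(\bL,\bb)$ resolves the diagonal bimodule. In the plumbing setting this was a finite, explicit computation; for an abstract $3$-dimensional $\bL$ one must instead extract the bimodule resolution from the $3$-Calabi--Yau structure and verify that the cyclic structure on Floer cohomology produced by Poincar\'e duality is compatible with the Calabi--Yau structure on $\cA_\bL$ that enters Ginzburg's construction of the resolution. A secondary point requiring care is the vanishing $W=0$ --- genuine unobstructedness of the deformations --- which should be deduced from the $3$-Calabi--Yau hypothesis together with the parity of the Floer grading in odd dimension, so that the functor honestly lands in $\mathrm{Perf}(\cA_\bL)$ rather than in a category of matrix factorizations.
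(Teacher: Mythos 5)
Your proposal follows essentially the same route as the paper: the paper also proves this by identifying $\cF^{(\bL,\bb)}(\bL,\bb)$, via unitality of the maximum points and Fukaya's cyclic structure, with Ginzburg's extended cotangent complex of the quiver with spacetime superpotential $\Phi_\bL$, and then invokes the Ginzburg--Bocklandt criterion that this complex is a projective bimodule resolution if and only if $\cA_\bL$ is $3$-Calabi--Yau, with the quasi-equivalence then following from the spectral-sequence argument of Theorem \ref{thm:qiso} (Corollary \ref{cor: res}). The only organizational difference is that the paper computes the differentials $d_1$, $d_2=\partial_{x_a}m_0^{\bb}$, $d_3$ of the Floer complex explicitly and unconditionally---no Calabi--Yau hypothesis enters the identification with the superpotential complex, which dissolves the ``compatibility'' worry you flag as the main obstacle---and the vanishing $W=0$ is a consequence of the $\Z$-grading rather than of unobstructedness.
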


Moreover, in these two cases, the noncommutative deformation space $\cA_\bL$ is Calabi-Yau if and only if the localized mirror functor $\cF^{(\bL,\bb)}(\bL,\bb)$ provides a self-dual projective $\cA_\bL$-bimodule resolution of $\cA_\bL$.

The paper is organized as follows.
Section \ref{sec: review} reviews the necessary background on convolution algebras, and Nakajima quiver varieties.
Section \ref{sec: gen} develops the general Floer-theoretic framework, introducing the localized mirror construction for families of higher-rank Lagrangian branes.
Section \ref{sec: H} gives the Floer-theoretic realization of Hecke correspondences and the construction of holomorphic Lagrangians in Nakajima quiver varieties.
Section \ref{sec: CY} establishes the full-faithfulness of the localized mirror functor and explains how this construction fits into local Homological Mirror Symmetry.

\subsection*{Acknowledgments}
We thank Hansol Hong for very valuable discussions on algebraic resolutions, Koszul duality and other related topics during his stay at Boston University in sabbatical. We are grateful to Naichung Conan Leung and Kwokwai Chan for enlightening discussions and invitations to The Chinese University of Hong Kong, and Helge Ruddat for the Miami mirror symmetry workshop in November 2024. The first author expresses his gratitude to Hiroshi Iritani and Kaoru Ono for the Kyoto workshop in December 2024 and useful discussions on disc moduli and Kuranishi structures. He thanks Cheol-Hyun Cho and Yoosik Kim for the invitation to Seoul National University in summer 2024 and the very interesting discussions there. He is also grateful to Banff International Research Station for the support of the mirror symmetry workshop in October 2025. The second author expresses his gratitude to Sheel Ganatra, whose questions about the mirror–symmetric correspondence of stability conditions motivated the present consideration of local homological mirror symmetry. He also thanks Ki Fung Chan, Eddie Lam and Yan Lung Leon Li for many stimulating discussions on Hecke correspondences.

\section{Quick review on Nakajima quiver varieties}\label{sec: review}

\subsection{Convolution Algebra} Let us first recall the convolution product for the Borel-Moore homology. More discussions about the convolution product can be found in \cite{FM81,CG97}.

Let $X$ be a topological space that can be embedded as a closed subspace of a Euclidean space $\R^m$. We define a version of Borel-Moore homology groups (see the Appendix B of \cite{Ful97} for the fundamental properties), denoted by $H_*(X)$, by the formula
$$H_i(X):=H^{m-i}(\R^m, \R^m \setminus X),$$ where the right-hand side is the relative singular cohomology group.

The definition is independent of the choice of an embedding: if $X$ is embedded as a closed subspace of an oriented smooth manifold $M$, there is a canonical isomorphism $$H_i(X) \cong H^{n-i}(M,M \setminus X),$$ where $n$ is the dimension of $M$.
If $f:X \to Y$ is a proper map, there is a pushforward homomorphism: $$f_*: H_i(X) \to H_i(Y).$$ 
If $\iota:U \to Y$ is an open embedding, there's a pullback homomorphism: $$\iota^*:H_i(Y) \to H_i(U).$$ 
If $X$ and $Y$ are closed subsets of an $n$-dimensional oriented manifold $M$, we have the cup product in the relative cohomology group $$\cup: H^{n-i}(M,M \setminus X) \otimes H^{n-j}(M,M \setminus Y) \to H^{2n-i-j}(M,M \setminus (X\cap Y)).$$ By the canonical isomorphism introduced before, it induces a cap product in the Borel-Moore homology group:
$$\cap: H_i(X) \otimes H_j(Y) \to H_{i+j-n}(X \cap Y).$$ Note that this product depends on the ambient space $M$.


Let $M^1,M^2$ and $M^3$ be oriented smooth manifolds and $p_{ij}:M^1 \times M^2 \times M^3 \to M^i \times M^j$ the natural projection. Let $Z \subset M^1 \times M^2$ and $Z' \subset M^2 \times M^3$ be closed subsets. By the cap product in $M^1 \times M^2 \times M^3,$ we have $$\cap: H_{i+d_3}(p_{12}^{-1}Z) \otimes H_{j+d_1}(p_{23}^{-1}Z') \to H_{i+j-d_2}(p_{12}^{-1}Z \cap p_{23}^{-1}Z'),$$ where $d_i$ is the dimension of $M^i$.

Assume the map $p_{13}: p_{12}^{-1}Z \cap p_{23}^{-1}Z' 
\to M^1 \times M^3$ is proper. Its image is denoted by $Z \circ Z'.$ We define the convolution by 
\begin{equation}
	*:H_i(Z) \otimes H_j(Z') \to H_{i+j-d_2}(Z \circ Z'), \quad c *c':=(p_{13})_*(p_{12}^*c \cap p_{23}^*c'), 
\end{equation} where $p_{12}^* c= c \times [M^3]$ and so on. This makes sense for disconnected manifolds as well.

\subsection{Nakajima Quiver Varieties}
In this subsection, we fix notaions for Nakajima quiver varieties. Let $D:=(I,E)$ be a finite graph, where $I$ is the set of vertices and $E$ the set of edges. Let $\textbf{A}$ be the adjacency matrix of the graph. Then $\textbf{C=2I-A}$ is a (symmetric) Cartan matrix.

Let $H$ be the set of pairs consisting of an edge together with its orientation. For $a \in H$, we denote $h(a)$ (resp. $t(a)$) the head (resp. tail) vertex of $a$. For $a \in H$, we denote $\Bar{a}$ the same edge as $a$ with the reverse orientation. An orientation $\Omega$ of the graph is a subset $\Omega \subset H$ such that $\Bar{\Omega} \cup \Omega=H, \Bar{\Omega} \cap \Omega= \emptyset.$ The orientation defines a function $\epsilon:H \to \{ \pm 1\}$ given by $\epsilon(a)=1$ if $a \in \Omega$ and $=-1$ if $a \in \Bar{\Omega}.$

Let $V= \oplus_{i \in I} V_i$ be an $I$-graded vector space. We define its dimension vector by 
$$\mathbf{v} = \dim V:= (\dim V_i)_{i \in I} \in \Z^I_{\geq 0}.$$ 
If $V^1, V^2$ are $I$-graded vector spaces, we introduce vector spaces $$L(V^1, V^2):=\oplus_{i \in I}\Hom(V^1_i, V^2_i)$$ which is the space of linear maps among the vector spaces over the same vertices. We also introduce $$E(V^1, V^2):=\oplus_{a \in H} \Hom(V^1_{t(a)},V^2_{h(a)})$$ which is the representation of the double quiver $Q$, see Definition below.

\begin{defn}
	The double quiver $Q$ of a graph $D:=(I,E)$ is the the quiver with the same vertices of $D$ and with the set of oriented edges $H$.
\end{defn} 

For $B=(B_a) \in E(V^1, V^2), C=(C_a) \in E(V^2,V^3),$ we define a multiplication of $B$ and $C$ by \begin{equation} \label{eq: mult}
	CB=\left(\sum_{h(a)=i} C_a B_{\Bar{a}}\right)_i \in L(V^1, V^3).
\end{equation}

Multiplications $Ba,Ca$ of $a \in L(V^1, V^2), B \in L(V^2,V^3), C\in E(V^2,V^3)$ are defined in similar manner. If  $a \in L(V^1,V^1)$, its trace $tr(a)$ is understood as $\sum_k tr(a_k).$

Let $V$ and $W$ be $I$-graded vector spaces. We define $$M(V,W):=E(V,V) \oplus L(V,W) \oplus L(W,V).$$
$M(V,W)$ is the representation space of the framed quiver $Q^\fr$, which is the double of $D$ with framing at each vertex. $W$ is called the framing whose dimension vector is denoted by $\bw$. The representation space of a double quiver is isomorphic to the cotangent space of the quiver representation space before doubling. Sometimes, we will write $M(\bv,\bw)$ instead of $M(V,W)$ to emphasize the dimension of the representations.

An element in $M(V,W)$ will be denoted by $(B, a, b)$. This space has a holomorphic symplectic form given by $$\omega((B,a,b),(B',a',b')):=tr(\epsilon BB') + tr(ab'-a'b),$$
where $\epsilon B$ is an element of $E(V,V)$ defined by $(\epsilon B)_a= \epsilon(a)B_a.$

Let $G:=G_\bv$ be the Lie group $\prod_i GL(V_i)$. It acts on $M(V,W)$ via $$g \cdot (B,a,b) \mapsto (gBg^{-1}, ag^{-1}, gb),$$
which preserves the symplectic form. The moment map is given by $$\mu(B,a,b)=\epsilon BB+ab \in L(V,V),$$ where the dual of the Lie algebra of $G$ is identified with $L(V,V)$ via the trace and the multiplication is defined as in Equation \ref{eq: mult}. 

Let $\zeta_\C=(\zeta_{\C,i}) \in \C^I $. We define a corresponding element in the center of $\mathrm{Lie} (G)$ by $\oplus_i \zeta_{\C,i} id_{V_i},$ where we delete the summand corresponding to $i$ if $V_i=0.$ Let $\mu^{-1}(\zeta_\C)$ be an affine algebraic variety (not necessarily irreducible) defined as the zero set of $\mu- \zeta_\C$. The group $G$ acts on $\mu^{-1}(\zeta_\C).$

We now define the stability conditions and Nakajima quiver varieties.

For $\zeta_\R= (\zeta_{\R,i})_{i \in I} \in \R^I,$ let $\zeta_\R \cdot \bv:= \sum_i \zeta_{\R,i} \bv_i.$

\begin{defn}\label{def: stab}
	A point $(B,a,b) \in M(V,W)$ is $\zeta_\R$-semistable if the following two conditions are satisfied:
	\begin{enumerate}
		\item If an $I$-graded subspace $S$ of $V$ is contained in $\Ker\, b$ and $B$-invariant, then $\zeta_\R \cdot \dim S \leq 0.$
		\item If an $I$-graded subspace $T$ of $V$ contains in $\Im\, a$ and $B$-invariant, then $\zeta_\R \cdot \dim T \leq \zeta_\R \cdot \dim V.$
	\end{enumerate}
	We say $(B,a,b)$ is $\zeta_\R$-stable if the strict inequalities hold in $1,2$ unless $S=0$, $T=V$ respectively.
\end{defn}
\begin{rem}\label{rem:stability}
	If  $\zeta_{\R,i}>0$ for all $i$, the condition $(2)$ is superfluous and the condition $(1)$ is that there is no nonzero $B$-invariant $I$-graded subspaces $S$ contained in $\Ker \, b$.
\end{rem}

Let $H^s_{\zeta_\R,\zeta_\C}$ (resp. $H^{ss}_{\zeta_\R,\zeta_\C}$) be the set of $\zeta_\R$-stable (resp. $\zeta_\R$-semistable) points in $\mu^{-1}(\zeta_\C).$ We say that two $\zeta_\R$-semistable points $(B,a,b), (B',a',b')$ are $S$-equivalent when the closures of $G_\bv$-orbits intersect in $H^{ss}_{\zeta_\R,\zeta_\C}$. We denote the pair $(\zeta_\R,\zeta_\C)$ by $\zeta$ for brevity. 
\begin{defn}
	The Nakajima quiver variety is defined to be $$\mathcal{M}_\zeta:=\mathcal{M}_\zeta(\bv,\bw):=H^{ss}_{\zeta_\R,\zeta_\C}/ \sim,$$ where $\sim$ denotes the $S-$equivalence relation. If the stability condition is clear in the context, it will be denoted by $\bM(\bv,\bw)$. The regular part is defined by
	$$\mathcal{M}^{reg}_\zeta:=\mathcal{M}^{reg}_\zeta(\bv,\bw):=H^{s}_{\zeta_\R,\zeta_\C}/ G_\bv.$$
\end{defn}

\begin{rem}\label{rem: tau}
	For a generic stability condition, $H^{s}_{\zeta_\R,\zeta_\C}$ is a principal $G_\bv$-bundle over $\mathcal{M}_\zeta(\bv,\bw)$. Since $V_i$ is a $G_\bv$ representation, one can associate the tautological vector bundles $\mathcal{V}_i:=H^{s}_{\zeta_\R,\zeta_\C} \times_{G_\bv} V_i$ over $\mathcal{M}_\zeta(\bv,\bw)$ for each vertex $i$. As for vector spaces, one can define the holomorphic vector bundles $L(\mathcal{V}^1,\mathcal{V}^2)$ and $E(\mathcal{V}^1,\mathcal{V}^2).$ We will also use this notations to describe the Lagrangian Floer complexes.
\end{rem}

Next, we briefly recall the definition of the Hecke correspondence following Nakajima \cite[§5]{Nak98}.
\begin{defn}
	Fix a vertex $k\in I$ and set $\bv^2=\bv^1+\mathbf{e}^k$.
	The \emph{Hecke correspondence}
	\[
	\mathfrak{P}_k(\bv^2,\bw)
	\subset \cM(\bv^1,\bw)\times \cM(\bv^2,\bw)
	\]
	is defined to be the subvariety consisting of pairs of framed representations
	\[
	\big([B_1,i_1,j_1],[B_2,i_2,j_2]\big)
	\]
	for which there exists an injective homomorphism 
	$\xi:V^1\hookrightarrow V^2$ satisfying
	\[
	B_2\circ\xi=\xi\circ B_1,\qquad i_2=\xi\circ i_1,\qquad j_1=j_2\circ\xi.
	\]
\end{defn}
Equivalently, $\mathfrak{P}_k(\bv^2,\bw)$ parameterizes $B_2$–invariant subspaces 
$S=\operatorname{Im}(\xi)\subset V^2$ containing $\operatorname{Im}(i_2)$ 
with $\dim(V^2/S)=\mathbf{e}^k$. 
Moreover, Nakajima \cite{Nak98} proved that $\mathfrak{P}_k(\bv^2,\bw)$ is a holomorphic Lagrangian subvariety. Its fundamental cycle serves as the geometric kernel of the creation and annihilation operators on the Borel-Moore homology of quiver varieties. This produces an 
integrable highest-weight representation of the deformed Kac–Moody algebra 
\cite[Thm.~10.3]{Nak98}.

\section{Representation theory from moduli spaces of Lagrangian branes}\label{sec: gen}
\subsection{Operators associated to moduli of branes supported on a Lagrangian immersion}
\label{sec:op_Fuk}
Let $X$ be a symplectic manifold. We assume that $X$ is either compact or convex non-compact. 
\begin{defn}
	A Lagrangian brane in $X$ is a pair $(\bL,\cE)$, where $\bL \subset X$ is a relatively-spin oriented graded Lagrangian immersion and $\cE \to \hat{\bL}$ is a flat vector bundle over the normalization $\hat{\bL}$ of $\bL$.
\end{defn} Under suitable choices of perturbation data, $(\bL,\cE)$ is associated with an $A_\infty$ algebra 
\begin{equation} \label{eq:CF}
	\CF((\bL,\cE),(\bL,\cE)) = C^*(\hat{\bL},\End(\cE)) \oplus \bigoplus_{a} \left(C^*(S_a,\Hom(\cE|_{\iota_-(S_a)},\cE|_{\iota_+(S_a)})) \oplus C^*(S_a,\Hom(\cE|_{\iota_+(S_a)},\cE|_{\iota_-(S_a)}))\right)
\end{equation}
by Lagrangian Floer theory \cite{FOOO09,AJ10,FOOO-can}, where $a$ indexes the clean intersections $S_a$ of $\bL$ and $\iota_\pm:S_a \to \hat{\bL}$ is the embedding to its normalization.  
We will denote the component contributed by the immersed sectors by
$$ \CF_{\textrm{imm}}((\bL,\cE),(\bL,\cE)) = \bigoplus_{a} \left(C^*(S_a,\Hom(\cE|_{\iota_-(S_a)},\cE|_{\iota_+(S_a)})) \oplus C^*(S_a,\Hom(\cE|_{\iota_+(S_a)},\cE|_{\iota_-(S_a)}))\right).$$

In this paper, we will use a Morse model \cite{OZ11, BC-pearl, FOOO-can, HKL23} for $\CF((\bL,\cE),(\bL,\cE))$ where we fix a Morse function on each component of $\hat{\bL}$ and the clean intersections $S_a$.\footnote{One can also use de Rham model \cite{FOOO_Kur} where the chains are bundle-valued differential forms, where $m_1 = \nabla + \sum_{\beta \not=0} m_{1,\beta}$ with $\nabla$ being the differential operator induced by the flat connection, and then algebraically pass to a finite dimensional minimal model. We use a Morse model for simpler intuition and computations.} We quickly review the setup here. The chains are linear combinations of homomorphisms of $\cE$ over the critical points of the Morse functions and $m_k$ are defined by counting pearl trajectories weighted by holonomy contributions. To ensure convergence,  we take the base field to be the Novikov field $\Lambda$ (or the base ring to be $\Lambda_0$), where
$$ \Lambda_0 = \left\{ \sum_{i=0}^\infty a_i T^{A_i} \mid a_i \in \C, A_0 \geq 0 \text{ and } A_i \textrm{ is strictly increasing } \right\}, $$
its maximal ideal
$$ \Lambda_+ = \left\{ \sum_{i=0}^\infty a_i T^{A_i} \mid a_i \in \C, A_0 > 0 \text{ and } A_i \textrm{ is strictly increasing } \right\}, $$
and its fraction field
$$ \Lambda = \left\{ \sum_{i=0}^\infty a_i T^{A_i} \mid a_i \in \C, A_i \textrm{ is strictly increasing } \right\}. $$
The $A_\infty$-operations $m_k$ for the category are defined by counting pseudo-holomorphic polygons bounded by $(L_0, L_1, \ldots, L_k)$ weighted by $T^A \,\mathrm{Hol}$, where $A$ denotes the symplectic area of the polygon and $\mathrm{Hol} \in \Hom(\cE_0|_p, \cE_k|_p)$ denotes the holonomy of the flat vector bundles along the counterclockwise boundary of the polygon (also composed with the homomorphisms at input corners) from the output intersection point $p \in L_0 \cap L_k$ back to itself. For a single Lagrangian $\bL$, $m^\bL_0$ is called to be the obstruction of $\bL$, which is a Floer-theoretical generalization of curvature of a bundle over $\bL$.

\begin{rem} \label{rem:trivialization}
	More explicitly, we can write the homomorphisms between components as matrices by choosing simply connected open subsets of components of $\hat{\bL}$ that contain all the critical points in the components and the self clean intersections $S_a$, and a trivialization of the flat bundle $\cE$ over each of these open subset. Then the fibers of $\cE$ over all the critical points can be identified as $\C^d$ for some $d$. The open subset in each component $\hat{\bL}_i$ is taken to be the complement of the union of unstable submanifolds of degree-one critical points of the Morse function on $\hat{\bL}_i$. The construction of Morse model ensures that the unstable submanifolds are transverse to pseudo-holomorphic polygons that contribute to the $m_k$ operations.
\end{rem}

We consider the space of flat connections $\cE$ over $\hat{\bL}$ and bounding cochains in the immersed part of $CF_{\textrm{imm}}^1((\bL,\cE),(\bL,\cE))$ with coefficients in $\Lambda_+$. 
\begin{defn}\label{def: MC}
	Let $\bL$ be a relatively-spin oriented graded Lagrangian immersion and $\bv$ be a dimension vector for flat bundles over $\bL$.
	The set of Maurer-Cartan solutions $\mathrm{MC}(\bv)$ is defined to be
	$$\mathrm{MC}(\bv) := \left\{(A,b): A \in \bigoplus_i \Hom(\pi_1(\hat{\bL}_i),\GL_{\Lambda_{\geq 0}}(v_i)) \textrm { and } b \in CF_{\textrm{imm},\Lambda_+}^1((\bL,\cE_A),(\bL,\cE_A)) \text{ with } m_0^b = 0\right\}$$
	where $\hat{\bL}_i$ are the $i$-th component of $\hat{\bL}$ and $v_i$ is the $i$-th component of $\bv$ which is the rank of $\cE|_{\hat{\bL}_i}$. By the trivialization in Remark \ref{rem:trivialization}, $b$ can be written in terms of tuples of matrices in $\CF_{\textrm{imm},\Lambda_+}^1((\bL,\bv),(\bL,\bv))$, so 
	$$\mathrm{MC}(\bv) \subset \left(\bigoplus_i \Hom(\pi_1(\hat{\bL}_i),\GL_{\Lambda_{\geq 0}}(v_i))\right) \times \CF_{\textrm{imm},\Lambda_+}^1((\bL,\bv),(\bL,\bv))$$
	is an affine variety.
	
	We have the gauge group symmetry
	$\cG$ on $\mathrm{MC}(\bv)$, where $\cG = \prod_i GL_{\Lambda_{\geq 0}}(v_i)$ is a reductive group.
	The Maurer-Cartan space is defined to be the quotient stack 
	$$[\mathrm{MC}(\bv) / \cG]$$ 
	which parametrizes a family of Lagrangian branes $(\cE,b)$ supported on $\bL$.
	
	To be more geometric, we can fix a GIT stability condition $\zeta$ and define the GIT quotient 
	$$\bMC_\zeta(\bv) := \mathrm{MC}(\bv) \sslash_\zeta \cG.$$ 
\end{defn}

By \cite[Theorem 2.19]{CHL17}, we have the $A_\infty$ functor
$$\cF^{(\bL,A,b)}: \Fuk(X) \to dg-Mod(\mathrm{MC}(\bv))$$
defined by the $A_\infty$ operations $m_k^{(\bL,A,b),-,\ldots,-}$, where $dg-Mod$ denotes the dg categories of complexes of modules over the affine variety $\mathrm{MC}(\bv)$.  
Since $m_k^{(\bL,A,b),-,\ldots,-}$ are strictly equivariant under the gauge group action $\cG$ on $\mathrm{MC}(\bv)$, the functor descends to the quotient $\bMC_\zeta(\bv)$.

\begin{thm}
	Let $\bL$ be a relatively-spin oriented graded Lagrangian immersion. For each $\bv \in \Z_{>0}^n$ where $n$ is the number of components of $\hat{\bL}$, there exists an $A_\infty$-functor 
	$$\cF^{(\bL,A,b)}: \Fuk(X) \to \mathrm{Perf}_{dg}(\bMC_\zeta(\bv))$$
	where $\mathrm{Perf}_{dg}$ denotes the dg categories of perfect complexes over $\bMC_\zeta(\bv)$.
\end{thm}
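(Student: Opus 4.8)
The plan is to upgrade the $A_\infty$ functor $\cF^{(\bL,A,b)}: \Fuk(X) \to dg\text{-}Mod(\mathrm{MC}(\bv))$ from the previous theorem (which, by \cite[Theorem 2.19]{CHL17}, lands in complexes of modules over the affine variety $\mathrm{MC}(\bv)$) to one that lands in $\mathrm{Perf}_{dg}(\bMC_\zeta(\bv))$, i.e. to prove two things: (i) for every Lagrangian brane $(\bL',\cE')$ in $\Fuk(X)$, the output complex $\cF^{(\bL,A,b)}(\bL',\cE')$ is a \emph{perfect} complex, not merely a dg module; and (ii) the $\cG$-equivariance of the operations $m_k^{(\bL,A,b),-,\ldots,-}$ noted in the excerpt allows the functor to descend from the quotient stack $[\mathrm{MC}(\bv)/\cG]$ to the GIT quotient $\bMC_\zeta(\bv)$ as a functor into perfect complexes on that scheme.

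For step (i), I would argue as follows. The Floer complex $\CF^*((\bL,A,b),(\bL',\cE'))$ for fixed $(A,b)$ is a finitely generated module over $\Lambda_0$ (or over the polynomial ring, after the $T=e^{-1}$ specialization discussed in the footnote) because in the Morse model the generators are homomorphisms of $\cE,\cE'$ over the finitely many intersection/critical points, and there are finitely many of them. As $(A,b)$ varies over $\mathrm{MC}(\bv)$, the differential $m_1^{(\bL,A,b),-}$ depends polynomially (algebraically) on the coordinates $(A,b)$, since the structure maps $m_k$ are defined by counting the same finite set of rigid pseudo-holomorphic polygons weighted by $T^A\,\mathrm{Hol}$ and the holonomy/bounding-cochain insertions depend polynomially on the matrix entries of $A$ and $b$. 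Hence $\cF^{(\bL,A,b)}(\bL',\cE')$ is represented by a finite complex of free modules of finite rank over the coordinate ring of $\mathrm{MC}(\bv)$ — a strictly perfect complex — which in particular is perfect. (If one prefers not to choose such a global trivialization, one still gets a bounded complex with coherent, locally free cohomology-degreewise terms, which is perfect on the smooth ambient affine variety.)

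For step (ii), the strict $\cG$-equivariance means $\cF^{(\bL,A,b)}(\bL',\cE')$ is naturally a $\cG$-equivariant strictly perfect complex on $\mathrm{MC}(\bv)$, and the higher structure maps of the functor are $\cG$-equivariant as well. A $\cG$-equivariant perfect complex on $\mathrm{MC}(\bv)$ descends to a perfect complex on the GIT quotient $\bMC_\zeta(\bv) = \mathrm{MC}(\bv)\sslash_\zeta\cG$: on the stable (or more generally semistable) locus the $\cG$-action is suitably free/nice and equivariant descent along the quotient map identifies $\cG$-equivariant perfect complexes upstairs with perfect complexes downstairs, using that $\cG$ is reductive so that taking invariants is exact. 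Assembling these descended complexes with the descended structure maps — which is legitimate precisely because all $m_k^{(\bL,A,b),-,\ldots,-}$ are strictly $\cG$-equivariant, so no coherence-up-to-homotopy correction is needed at this stage — yields the desired $A_\infty$ functor $\cF^{(\bL,A,b)}: \Fuk(X) \to \mathrm{Perf}_{dg}(\bMC_\zeta(\bv))$.

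The main obstacle I anticipate is step (i), specifically the claim that the Floer differential varies \emph{algebraically} (rather than merely continuously or formally) over $\mathrm{MC}(\bv)$. This requires care with the Novikov coefficients: a priori $m_1$ is an infinite sum $\nabla + \sum_{\beta\neq 0} m_{1,\beta}T^{\omega(\beta)}$, so one must invoke the convergence/polynomiality discussed in the paper's footnote — that one works with the subspace of Maurer-Cartan solutions having polynomial $T$-dependence, allowing specialization $T = e^{-1}$ — to land in an honest (Noetherian, finite-type) affine scheme where "perfect complex" has its usual meaning. A secondary subtlety is checking that perfectness is preserved when one does not trivialize globally and must instead glue over a cover of $\mathrm{MC}(\bv)$ (equivalently, patching the local trivializations of Remark \ref{rem:trivialization}); this is routine since perfectness is local, but it should be stated. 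Finally, one should remark that when $\bMC_\zeta(\bv)$ is only the semistable quotient and has singularities or nontrivial stabilizers, "$\mathrm{Perf}_{dg}$" is taken in the appropriate derived-category-of-the-GIT-quotient sense, and descent still works because of reductivity of $\cG$; I would point to the stability condition $\zeta$ from Theorem \ref{thm: MC} which is generic enough that $H^s_{\zeta_\R,\zeta_\C}$ is a principal $\cG$-bundle (Remark \ref{rem: tau}), making the descent manifestly clean on the regular part.
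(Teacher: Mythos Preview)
Your proposal is correct and follows the same two-step outline the paper uses: invoke \cite[Theorem 2.19]{CHL17} to get the functor into $dg\text{-}Mod(\mathrm{MC}(\bv))$, then use strict $\cG$-equivariance of the $m_k^{(\bL,A,b),-,\ldots,-}$ to descend to $\bMC_\zeta(\bv)$. In fact the paper gives no formal proof beyond the two sentences immediately preceding the theorem statement; your treatment is more thorough than the paper's, in that you explicitly justify why the output complex is \emph{perfect} (finite free terms, algebraic dependence of the differential) and flag the Novikov-convergence subtlety, points the paper leaves implicit.
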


The framed setup below ensures stable quotients.

\begin{defn}\label{def: flag}
	For a relatively-spin oriented graded Lagrangian immersion, suppose its normalization has a component $F$ that has no $H^1$ nor $H^2$. Moreover we assume that for each component $C$, there exists at least one $\deg=1$ morphism between $C$ and a different component. We call the splitting $\bL \cup F$ to be a \emph{framed Lagrangian immersion} $\bL^{\fr}$ where $\bL$ is the union of all components other than $F$. Every bundle $\cE$ we consider over $\bL^\fr$ is required to satisfy $\cE|_{F} = \underline{\C}_F$, the trivial line bundle, so that the gauge group for the component $F$ is trivial. Moreover, by the assumption that $H^2(F)=0$, the framing component $F$ does not contribute to the obstruction $m_0^{\bL^\fr}$.  We denote the Maurer-Cartan space of $\bL^\fr$ by
	$\bMC_\zeta(\bv,\bw).$
\end{defn}

In the framed case, the ambient space $\left(\bigoplus_i \Hom(\pi_1(\hat{\bL}_i),\GL(d_i))\right) \times CF_{\textrm{imm}}^1((\bL^\fr,\bv,\bw),(\bL^\fr,\bv,\bw))$ can be identified with a quiver representation space in the situation of Definition \ref{def: stab}, where the dimension vector $\bw$ is the number of intersections of $F$ with the $i$-th component, and those framing components $a_i$ or $b_i$ that correspond to $\deg\not=1$ morphisms between $F$ and $L_i$ are set to zero. Using the stability condition in Remark \ref{rem:stability}, all semi-stable points are stable points.



\begin{defn} \label{def:Hecke sheaf}
	Let $\bL$ be a relatively-spin oriented graded Lagrangian immersion. For two different rank vectors $\bv^1$ and $\bv^2$, we have the complex of tautological bundles over $\bMC_\zeta(\bv^1) \times \bMC_\zeta(\bv^2)$, which is formed by the Floer complex $\left(\CF^*((\bL,\cE_1), (\bL,\cE_2)),m_1^{b_1,b_2}\right)$ that is equivariant under the gauge group action. The Hecke sheaf $\cH_\zeta(\bv^1,\bv^2)$ is defined to be its cohomology sheaf over $\bMC_\zeta(\bv^1) \times \bMC_\zeta(\bv^2)$.  The sheaf serves as a kernel of the derived Fourier-Mukai transform which gives a functor from the derived category of $\bMC_\zeta(\bv^1)$ to that of $\bMC_\zeta(\bv^2)$.
	
	Similarly, for a framed Lagrangian immersion $\bL^\fr = \bL \cup F$, we have the Hecke sheaf $\cH_\zeta(\bv^1,\bv^2,\bw)$ over $\bMC_\zeta(\bv^1,\bw) \times \bMC_\zeta(\bv^2,\bw)$.
\end{defn}

\begin{defn} \label{def:operators}
	Let $\bL$ be a relatively-spin oriented graded Lagrangian immersion. Fix a character vector $\zeta \in \C^n$ where $n$ is the number of components of $\bL$. Let
	$$ \cD := \bigoplus_\bv D^b(\bMC_\zeta(\bv)). $$
	The creation functor $\cD \to \cD$ is defined by the Fourier-Mukai transform from $D^b(\bMC_\zeta(\bv))$ to $D^b(\bMC_\zeta(\bv+\mathbf{e}^i)$ for all $\bv$, where $\mathbf{e}^i$ is the rank vector that is $1$ on the $i$-th component and $0$ otherwise.
	Similarly, the annihilation functor $\cD \to \cD$ is defined by the Fourier-Mukai transform from $D^b(\bMC_\zeta(\bv))$ to $D^b(\bMC_\zeta(\bv-\mathbf{e}^i))$. ($D^b(\bMC_\zeta(\bv)) := 0$ if $\bv$ has negative entries.)
	
	Similarly, we have $\cD^\fr := \bigoplus_\bv D^b(\bMC_\zeta(\bv,\bw))$ for a framed Lagrangian immersion $\bL^\fr$ and the corresponding creation and annihilation functors $\cD^\fr \to \cD^\fr$.
\end{defn}
Rich and interesting geometric representation theory arise when one considers the algebra generated by these endo-functors and their induced actions on homological or K-theoretic invariants. 
In \cite{HLT24} and in the later part of this paper, $X$ is taken to be a non-compact exact conical symplectic 4-fold. The framing $F$ is topologically $\R^2$ and $\bL$ is a union of spheres.
In particular, when $X$ is a plumbing of $T^*\bS^2$ and $\bL^\fr$ is a framed Lagrangian immersion, we will see that these operations recover the Hecke correspondence algebra.

\subsection{Maurer-Cartan quiver algebra} \label{section:nc mirror}
To work with all dimension vectors $\bv$ simultaneously, we will use the quiver formulation. Noncommutative mirror functors with quivers were formulated in \cite{CHL21}. Motivated by the viewpoint of noncommutative geometry, in which a space is replaced by an algebra and studied via its representations, the commutative Maurer–Cartan space can be interpreted as the moduli of representations of the noncommutative localized mirror, see the discussion at the end of this section.

The quiver formulation modeling flat bundles and boundary deformations in general ranks is given as follows. As in Remark \ref{rem:trivialization}, we have fixed local trivializations of $\hat{\bL}$ and immersed sectors $S_a$ so that the connections and boundary deformations are written as matrices. We use invertible loop arrows at a vertex and arrows between different vertices of a quiver $Q$ to represent them. (This is slightly different from \cite{CHL21}: we include flat connections here and introduce invertible arrows as their holonomy variables. This depends on the choice of Morse functions we have made.) Below are the steps of constructing the Maurer-Cartan quiver algebra $\cA_{\bL}$.

\begin{enumerate}
	\item 	
	The Morse function on each component of $\hat{\bL}$ that we take is required to have a unique maximum point (the Morse flow we take is in the descending convention). Moreover, we take the stable submanifolds of the degree-one critical points as generators of the group $\prod_i \pi_1(\hat{\bL}_i)$. 
	We associate a quiver $Q$ to $\bL$ whose vertices are one-to-one corresponding to components of $\bL$. Each generator of $\pi_1(\hat{\bL}_i)$ corresponds to two loop arrows (back and forth) at the $i$-th vertex; each generator of $\CF_{\mathrm{imm}}^1(\bL)$ (which are critical points of Morse functions on the immersed sectors) corresponds to an arrow between the corresponding vertices.
	\item Let $\Lambda_0 Q$ be the free path algebra of $Q$ over Novikov ring. Each arrow $a$ in $Q$ is associated with a Novikov valuation such that $\text{val}(a) >0$. The valuation induces a filtration on $\Lambda_0 Q$. Take the completion of $\Lambda_0 Q$ with respect to this filtration. By abuse of notation, we still denote the completion by $\Lambda_0 Q$. 
	\item 
	Extend the Fukaya algebra of $\bL$ over the path algebra $\Lambda_0 Q$ and obtain a non-commutative $A_\infty$-algebra $$\tilde{A}^\bL = \Lambda_0 Q\otimes_{\Lambda_0^{\oplus}} \CF(\bL),$$ 
	whose unit is $\one_{\bL} = \sum \one_{\bL_i}$.  $\Lambda_0^{\oplus} \subset \Lambda_0 Q$ denotes $\bigoplus_i \Lambda_0 \cdot e_i$ where $e_i$ are the trivial paths at vertices of $Q$.  The fibered tensor product means that an element $a \otimes X$ is non-zero only when tail of $a$ corresponds to the source of $X$.  The $A_\infty$-operations are defined by
	\begin{equation} \label{eq:mk}
		m_k (f_1 X_1,\ldots,f_k X_k) := f_k \ldots f_1 \, m_k (X_1,\ldots,X_k)
	\end{equation}
	where $X_l \in \CF(\bL)$  and $f_l \in \Lambda_0 Q$.
	\item Extend the formalism of bounding cochains of \cite{FOOO09} over $\Lambda_0 Q$, that is, we take 
	\begin{equation} \label{eq:b}
		\mathbf{A} = (a_\gamma), \,\bb = \sum_l b_l X_l
	\end{equation}
	where $\gamma$ are the generators of $\pi_1(\hat{\bL}_i)$, $a_\gamma$ are the corresponding self-arrows in $Q$, $X_l$ are the immersed generators of $\CF^1(\bL)$, and $b_l$ are the corresponding arrows in $Q$.  Then define the deformed $A_\infty$-structure $m_k^{\mathbf{A},\bb}$ as in \cite{FOOO09} and via Equation \eqref{eq:mk}. To simplify the notation, we denote $(\mathbf{A},\bb)$ just by $\bb$. 
	\item Quotient out the quiver algebra by the two-sided ideal $R$ generated by coefficients of the obstruction term $m_0^{\bb}$, together with $xy-e_i$ for each pair of back-and-forth loop arrows $x,y$ associated to a generator of $\pi_1(\hat{\bL}_i)$, and those that correspond to the relations for $\pi_1(\hat{\bL}_i)$.
	$$\cA_\bL := \Lambda_0 Q/R.$$ 
	\item Extend the Fukaya category over $\cA_\bL$, and enlarge the Fukaya category by including the noncommutative family of objects $(\bL,\bb)$ where $\bb$ in \eqref{eq:b} is now defined over $\cA_\bL$.  This means for $L_1,L_2$ in the original Fukaya category, the morphism space is now extended as $\cA_\bL \otimes \CF(L_1,L_2)$.  The morphism spaces between $(\bL,\bb)$ and $L$ are enlarged to be $\CF((\bL,\bb),L) := \cA_\bL\otimes_{\Lambda_0^{\oplus}} \CF(\mathbb{L},L)$ (and similarly for $\CF(L,(\bL,\bb))$).  We already have $\CF((\bL,\bb),(\bL,\bb))$ in Step 2 (except that $\Lambda_0 Q$ is replaced by $\cA_\bL$).  The $m_k$ operations are extended in a similar way as \eqref{eq:mk}.
\end{enumerate}

\begin{thm}[\cite{CHL21}]
	Consider the boundary deformation $\bb$ of $\bL$. There exists a well-defined $A_\infty$-functor
	$$\cF^{(\bL,\bb)}: \Fuk(X) \to \MF(\cA_\bL,W).$$
\end{thm}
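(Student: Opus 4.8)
The plan is to build the $A_\infty$-functor $\cF^{(\bL,\bb)}$ by applying the general localized mirror construction of \cite{CHL17} with the deformed family $(\bL,\bb)$ as the reference object, but now working over the path algebra $\Lambda_0 Q$ and its quotient $\cA_\bL$ rather than over a commutative local ring. First I would recall that Step 6 above already enlarges the Fukaya category so that for any $L$ in $\Fuk(X)$ the morphism space $\CF((\bL,\bb),L) := \cA_\bL \otimes_{\Lambda_0^{\oplus}} \CF(\bL,L)$ is a right $\cA_\bL$-module, and the $m_k$ operations extend by the formula \eqref{eq:mk}. On objects, the functor sends $L$ to the pair $\bigl(\CF((\bL,\bb),L), m_1^{\bb,\cdots,\bb,-}\bigr)$, viewed as a $\Z/2$-graded $\cA_\bL$-module with a degree-one endomorphism; on morphisms (and higher morphisms) it is defined by $\cF^{(\bL,\bb)}_k(x_1,\ldots,x_k) := m_{k+1}^{\bb,\ldots,\bb,x_1,\ldots,x_k}$ with an appropriate number of $\bb$-insertions inserted before the first input, exactly as in \cite[Theorem 2.19]{CHL17} and the functor $\cF^{(\bL,A,b)}$ recalled earlier in this section.

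The two things to verify are (i) that $\cF^{(\bL,\bb)}$ lands in $\MF(\cA_\bL,W)$, i.e. $(m_1^{\bb,\ldots,\bb,-})^2 = W \cdot \Id$ for the potential $W$ determined by $m_0^{\bb}$, and (ii) that the $\cF^{(\bL,\bb)}_k$ satisfy the $A_\infty$-functor relations. For (i), the $A_\infty$-relation for $m$ applied to the string $(\bb,\ldots,\bb,x)$, together with the defining property that after quotienting by the ideal $R$ the obstruction $m_0^{\bb}$ becomes a scalar multiple $W\cdot \one_{\bL}$ (this is precisely how $W$ and $\cA_\bL$ are set up in Step 5 — the non-central part of $m_0^{\bb}$ is killed in $\cA_\bL$, and the assumption $H^2(F)=0$ from Definition \ref{def: flag} guarantees $F$ contributes nothing to $m_0^{\bL^\fr}$), forces $(m_1^{\bb})^2(x) = m_2(m_0^{\bb},x) \pm m_2(x,m_0^{\bb}) = W\cdot x$, where one uses that $W$ lands in the center of $\cA_\bL$ after imposing $R$. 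For (ii), the functor identities are the standard consequence of expanding the $A_\infty$-relations of the enlarged category on inputs of the form $(\bb,\ldots,\bb,x_1,\bb,\ldots,\bb,x_2,\ldots)$ and using convergence in the Novikov-filtered completion to make the infinite sums over the number of $\bb$-insertions well-defined; this is where $\val(a)>0$ for every arrow $a$ (Step 2) is essential, so that the sum $\sum_k m_k^{\bb,\ldots,\bb,-}$ converges $T$-adically in the completed path algebra.

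I expect the main obstacle to be bookkeeping rather than conceptual: one must check that all the sums defining $m_k^{\bb}$ and $\cF^{(\bL,\bb)}_k$ remain convergent after extending scalars from $\Lambda_0$ to the \emph{noncommutative} completed algebra $\Lambda_0 Q$, and that passing to the quotient $\cA_\bL = \Lambda_0 Q/R$ is compatible with the $A_\infty$-structure — i.e. that $R$ is preserved by all the relevant operations so that the operations descend. The subtle point is that $W = m_0^{\bb} \bmod R$ must be genuinely central in $\cA_\bL$ for $\MF(\cA_\bL,W)$ to make sense as a curved category; this is not automatic for a general path algebra, so one either invokes the relevant result of \cite{CHL21} directly (as the theorem is attributed there) or observes that the leading term of $m_0^{\bb}$ is $\sum_i W_i e_i$ with the $W_i$ forced to agree by the cyclic symmetry of the obstruction, hence central. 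Everything else — gradings, signs, the relatively-spin and orientation hypotheses ensuring the Morse-model $m_k$ are defined — is inherited verbatim from the construction recalled in Section \ref{sec: gen} and from \cite{CHL17,CHL21}, so the proof is essentially a citation together with the observation that the framed hypotheses of Definition \ref{def: flag} make the curved term behave as required.
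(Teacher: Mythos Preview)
The paper provides no proof of this theorem; it is quoted directly from \cite{CHL21} as a citation. Your sketch is an accurate outline of the construction carried out in that reference: the functor is defined on objects by $L \mapsto (\CF((\bL,\bb),L), m_1^{\bb,-})$ and on (higher) morphisms by inserting $\bb$'s into the $m_k$-operations, with the matrix-factorization relation $(m_1^{\bb})^2 = W\cdot \Id$ and the $A_\infty$-functor equations both following from the $A_\infty$-relations of the enlarged category together with $m_0^{\bb} = W\cdot \one_{\bL}$ over $\cA_\bL$.

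Two small corrections. First, the appeal to $H^2(F)=0$ from Definition \ref{def: flag} is misplaced: that definition concerns the framed immersion $\bL^\fr$, whereas the theorem here is stated for a general $\bL$ with no framing hypothesis; the vanishing of the non-central part of $m_0^{\bb}$ in $\cA_\bL$ is secured purely by Step 5 (quotienting by the ideal $R$ generated by the coefficients of $m_0^{\bb}$), not by any assumption on a framing component. Second, your discussion of the centrality of $W$ is a bit speculative: in the $\Z$-graded situation treated in this paper one simply has $W=0$ (as the remark following the theorem notes), so the issue does not arise; in the $\Z/2$-graded case of \cite{CHL21} centrality is established there and is what you should cite rather than argue from cyclic symmetry. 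These are minor; the structure of your argument matches the source.
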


\begin{rem}
	$m_0^{\bb}=W\cdot \one_\bL$ has degree $2$.  Thus if furthermore $\bL$ is graded and we work in the $\Z$-graded case, $W=0$, and the above $\MF(\cA_\bL,W)$ reduces to the dg category of complexes of $\cA_\bL$-modules.
	
	Intuitively, the noncommutative deformation space $\cA_\bL$ can be understood via Strominger-Yau-Zaslow Conjecture \cite{SYZ96}, which predicts that the mirror space is constructed as the moduli space of (special) Lagrangians (see also \cite{Aur07}). Roughly, a Lagrangian $\bL$ corresponds to a point of the mirror, while its deformation space $\cA_\bL$ forms a neighborhood of that point. Thus, $\cA_\bL$ is also refered as the localized mirror or noncommutative deformation space.
\end{rem}

The connection between the noncommutative deformation space $\cA_\bL$ and the Maurer-Cartan space $\bMC_\zeta(\bv)$ is as follows.
\begin{prop}
	The Maurer-Cartan space $\bMC_\zeta(\bv)$ equals the moduli space of $\zeta$-semistable representations of the quiver algebra $\cA_\bL$.
\end{prop}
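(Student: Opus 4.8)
The plan is to identify the Maurer--Cartan solution space $\mathrm{MC}(\bv)$ with the representation variety $\mathrm{Rep}_{\bv}(\cA_\bL)$ of $\bv$-dimensional representations of $\cA_\bL$, $\cG$-equivariantly for $\cG=\prod_i\GL(v_i)$, and then quote the standard GIT description of the moduli of semistable quiver representations. First I would unwind what a $\bv$-dimensional representation of $\cA_\bL=\Lambda_0 Q/R$ is: it assigns to each vertex $i$ of $Q$ a free module of rank $v_i$ and to each arrow a homomorphism between the modules at its endpoints, subject to the relations in $R$. By the construction of $Q$ in Steps (1)--(2) and the local trivializations of Remark \ref{rem:trivialization}, the arrows are of two kinds: the back-and-forth loop arrows $x_\gamma,y_\gamma$ at vertex $i$ attached to generators $\gamma$ of $\pi_1(\hat\bL_i)$, and the arrows $b_l$ between vertices attached to the immersed generators $X_l$ of $\CF^1_{\mathrm{imm}}(\bL)$. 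The relations $x_\gamma y_\gamma-e_i$ in $R$, together with the group relations of $\pi_1(\hat\bL_i)$, force the matrices on the loop arrows to assemble into a group homomorphism $\pi_1(\hat\bL_i)\to\GL(v_i)$, i.e.\ precisely a datum $A\in\bigoplus_i\Hom(\pi_1(\hat\bL_i),\GL_{\Lambda_{\geq 0}}(v_i))$ and hence a flat bundle $\cE_A$; while the matrices on the arrows $b_l$ assemble into an element $b=\sum_l b_l X_l\in\CF^1_{\mathrm{imm},\Lambda_+}((\bL,\cE_A),(\bL,\cE_A))$. (Here the positivity of the Novikov valuations attached to the arrows in Step (2) is exactly what makes the completed algebra $\cA_\bL$ act on such filtered representations and makes the relevant sums converge.)

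\textbf{Step: the obstruction relations become the Maurer--Cartan equation.} The remaining generators of $R$ are the coefficients of the obstruction $m_0^{\bb}$ of the universal deformation $\bb=(\mathbf{A},\bb)$. By the definition of the deformed operations through Equation \eqref{eq:mk}, substituting the matrices $(A,b)$ for the universal arrows $(\mathbf{A},\bb)$ turns $m_0^{\bb}$ into the genuine Floer obstruction $m_0^{b}$ of the bounding cochain $b$ over $\cE_A$; thus the vanishing of those coefficients is exactly the Maurer--Cartan equation $m_0^{b}=0$. Combining with the previous paragraph, one obtains an isomorphism of affine varieties $\mathrm{Rep}_{\bv}(\cA_\bL)\cong\mathrm{MC}(\bv)$ sitting inside the same ambient space displayed in Definition \ref{def: MC}, and it is $\cG$-equivariant because base-changing a representation at the vertices is precisely the action of $\cG=\prod_i\GL(v_i)$ on $\mathrm{MC}(\bv)$; in particular two points of $\mathrm{MC}(\bv)$ lie in the same $\cG$-orbit if and only if the corresponding branes $(\cE_A,b)$ are isomorphic as $\cA_\bL$-representations.

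\textbf{Step: GIT.} Finally I would invoke King's GIT construction of moduli of quiver representations: for the character $\zeta$, the moduli space of $\zeta$-semistable $\bv$-dimensional representations of the (complete) path algebra with relations $\cA_\bL$ is the GIT quotient of $\mathrm{Rep}_{\bv}(\cA_\bL)$ by $\prod_i\GL(v_i)$ with linearization $\zeta$. Transporting along the isomorphism of the previous step, this reads $\mathrm{MC}(\bv)\sslash_\zeta\cG=\bMC_\zeta(\bv)$, which is the assertion.

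The hard part — and essentially the only non-formal point — will be matching the two notions of stability: verifying that King's numerical $\zeta$-(semi)stability of an $\cA_\bL$-representation coincides with the GIT $\zeta$-(semi)stability used to define $\bMC_\zeta(\bv)$. This goes through the Hilbert--Mumford criterion, under which the destabilizing one-parameter subgroups of $\cG$ correspond to filtrations of the representation by $\cA_\bL$-subrepresentations (equivalently, by $\cE_A$-flat, $b$-invariant graded subspaces), so the numerical inequality on subrepresentations matches the Mumford weight computed from $\zeta$. In the framed situation of Definition \ref{def: flag} this is cleaner still, since there semistable equals stable by Remark \ref{rem:stability}; everything else is bookkeeping once the dictionary of the first step is in place.
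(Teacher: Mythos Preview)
Your proposal is correct and follows essentially the same route as the paper: identify $\mathrm{MC}(\bv)$ with $\mathrm{Rep}_{\bv}(\cA_\bL)$ $\cG$-equivariantly via the trivializations of Remark~\ref{rem:trivialization}, observe that the obstruction relations in $R$ specialize to the Maurer--Cartan equation, and then pass to the GIT quotient. Your write-up is in fact considerably more detailed than the paper's (which is a two-sentence sketch); in particular your discussion of how the invertibility relations $x_\gamma y_\gamma-e_i$ together with the $\pi_1$-relations recover flat connections, and your invocation of King's Hilbert--Mumford argument for the stability matching, make explicit what the paper leaves implicit.
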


\begin{proof}
By Remark \ref{rem:trivialization}, we have taken local trivializations to write $(A,b) \in \mathrm{MC}(\bv)$ in terms of matrices $A_D$ and $B_X$ respectively, where $D$ are the degree-one critical points of components of $\hat{\bL}$ and $X$ are the degree-one immersed generators. Thus, $(A^{\pm 1},b)$ can be taken as a representation of the arrows of $Q$. The obstruction equation $m_0^{(A,b)}=0$ for $(A,b)$ is identical to the quiver version $m_0^{\bb}=0$ via the quiver representation of $\bb$ by $(A,b)$. Moreover, the gauge group $\cG$ for ${MC}(\bv)$ equals to the automorphism group of quiver representations. Consequently, the Maurer-Cartan space $\bMC_\zeta(\bv)$ can be identified with the moduli space of $\zeta$-semistable representations of $\cA_\bL$. 
\end{proof}

\subsection{Lagrangian Floer cohomology and quiver representations}

In this subsection we establish some general relations between quiver representations and Lagrangian Floer theory. 

In previous subsections, we have introduced the moduli space $\bMC_\zeta(\bv)$ of Lagrangian branes with flat connections in rank $\bv$ supported over $\bL$.
For the purpose of this subsection, we will work with a slightly different moduli space $\widehat{\bMC}_\zeta(\bv)$ that takes degree-one boundary deformations in place of flat connections. 
When all components of $\bL$ are simply connected (which is the case in the later sections), there is no non-trivial flat connection and so $\bMC_\zeta(\bv) = \widehat{\bMC}_\zeta(\bv)$.

Roughly speaking, the relation between $\widehat{\bMC}_\zeta(\bv)$ and $\bMC_\zeta(\bv)$ is analogous to that between the Lie algebra $\mathfrak{gl}(n)$ and the Lie group $\GL(n)$.
However, the precise relation is subtle since $\pi_1(\bL)$ has non-commuting relations which are not directly related to those produced from $m_0^b$ by boundary deformations $b$.
For the proposition below, we need to use the relation between $m_0^b$ and $m_1^b$ which only works for $\widehat{\bMC}_\zeta(\bv)$ but not its multiplicative version $\bMC_\zeta(\bv)$.

\begin{defn}\label{def: MChat}
	Let $\bL$ be a relatively-spin oriented graded Lagrangian immersion equipped with the trivial bundle in rank $\bv$.
	The set of Maurer-Cartan solutions $\widehat{\mathrm{MC}}(\bv)$ is defined to be the affine variety
	$$\widehat{\mathrm{MC}}(\bv) := \left\{b \in CF_{\Lambda_+}^1((\bL,\bv),(\bL,\bv)): m_0^b = 0\right\}.$$
	
	We have the gauge group symmetry
	$\cG$ on $\widehat{\mathrm{MC}}(\bv)$, where $\cG = \prod_i GL_{\Lambda_{\geq 0}}(d_i)$ is a reductive group.
	We fix a GIT stability condition $\zeta$ and define the GIT quotient 
	$$\widehat{\bMC}_\zeta(\bv) := \widehat{\mathrm{MC}}(\bv) \sslash_\zeta \cG.$$ 
\end{defn}

$\widehat{\bMC}_\zeta(\bv)$ is a quiver variety with generic stability condition $\zeta$. 
$[b] \in \widehat{\bMC}_\zeta(\bv)$ is an equivalent classe of quiver representations, or equivalently a finite-dimensional module of the quiver algebra $\cA = \cA_{\bL}$. We write $\mathrm{Ext}^*_{\cA}(b_1,b_2)$ for the Ext-groups between such modules.  Then
\begin{prop}\label{prop: Hom-Ext}
     Let $b_k \in \widehat{\bMC}_\zeta(\bv_k)$ for $k=1,2$, and suppose $\cA = \cA_\bL$ is admissible (in the sense that each path in the defining quiver relations has at least length two). Then there are isomorphisms
	\begin{align}
	\HF^0((\bL,b_1), (\bL,b_2))\cong&\, \Hom_{\cA}(b_1,b_2); \label{eq:HF0} \\ 
	\HF^1((\bL,b_1), (\bL,b_2))\cong&\, \mathrm{Ext}^1_{\cA}(b_1,b_2). \label{eq:HF1}
	\end{align}
\end{prop}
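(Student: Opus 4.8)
The plan is to prove the proposition by a direct comparison of the Floer complex $\big(\CF^*((\bL,b_1),(\bL,b_2)),\,m_1^{b_1,b_2}\big)$ with the standard low-degree complex computing $\mathrm{Ext}^*_\cA$ between representations of the quiver algebra $\cA=\cA_\bL$, matched in cohomological degrees $0$ and $1$. One could instead try to route the argument through the localized mirror functor $\cF^{(\bL,\bb)}$, identifying $\HF((\bL,b_1),(\bL,b_2))$ with $\mathrm{RHom}$ of the corresponding modules; but this presupposes control of $\cF^{(\bL,\bb)}(\bL,\bb)$ as in Theorem~\ref{thm:qiso}, available only in the $2$-Calabi--Yau case and itself proved by computations of the present type, so a hands-on argument valid for every admissible $\cA$ is preferable. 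First I would unwind the Morse model of Section~\ref{sec:op_Fuk}: writing $\CF^*((\bL,b_1),(\bL,b_2))=\bigoplus_i C^*(\hat{\bL}_i,\Hom(\cE_1,\cE_2))\oplus\CF^*_{\mathrm{imm}}$ and using that each component carries a Morse function with a unique maximum and that the arrows of $Q$ are exactly the degree-one critical points of the Morse functions on the components and on the immersed sectors (Section~\ref{section:nc mirror}), one identifies $\CF^0=\bigoplus_i\Hom(V^1_i,V^2_i)$ (one copy for the degree-zero critical point of each $\hat{\bL}_i$) and $\CF^1=\bigoplus_a\Hom(V^1_{t(a)},V^2_{h(a)})$, the sum over arrows $a$ of $Q$. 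Since $\CF^*$ is concentrated in non-negative degrees, $\HF^0=\Ker\big(m_1^{b_1,b_2}\colon\CF^0\to\CF^1\big)$.

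Next I would compute the differential. Because $m_1^{b_1,b_2}(x)=\sum_{k,l\ge 0}m_{k+l+1}(b_1^{\otimes k},x,b_2^{\otimes l})$ and the unit is strict for $m_2$, the only contributions surviving on a tuple $\phi=(\phi_i)\in\CF^0$ are the length-two terms inserting a single copy of $b_j$, giving $(m_1^{b_1,b_2}\phi)_a=\pm\big((b_2)_a\,\phi_{t(a)}-\phi_{h(a)}\,(b_1)_a\big)$ for each arrow $a$, with holonomy weights absorbed into the trivializations of Remark~\ref{rem:trivialization}. Hence $\Ker\big(m_1^{b_1,b_2}|_{\CF^0}\big)$ is exactly the space of vertex-graded tuples intertwining the two $\cA$-actions, namely $\Hom_\cA(b_1,b_2)$, proving \eqref{eq:HF0}. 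For \eqref{eq:HF1} I would single out the summand of $\CF^2$ spanned by the ``point classes'' of the components (for plumbings of $T^*\bS^2$ this is all of $\CF^2$), isomorphic to $\bigoplus_i\Hom(V^1_i,V^2_i)$, and show that the part of $m_1^{b_1,b_2}\colon\CF^1\to\CF^2$ landing there is the relations differential of quiver representations: it sends $\xi=(\xi_a)$ to the tuple whose entry at a minimal defining relation $r\in R$ is the noncommutative derivative of $r$, with the $b_2$-matrices substituted for arrows to the left of the differentiated position, the $b_1$-matrices for those to the right, and $\xi$ in the middle. This is exactly the map $d^1$ in the complex \[\bigoplus_i\Hom(V^1_i,V^2_i)\xrightarrow{\,d^0\,}\bigoplus_a\Hom(V^1_{t(a)},V^2_{h(a)})\xrightarrow{\,d^1\,}\bigoplus_{r\in R}\Hom(V^1_{t(r)},V^2_{h(r)})\] associated to the standard projective bimodule resolution of the admissible algebra $\cA$, whose cohomologies in degrees $0,1$ are $\Hom_\cA(b_1,b_2)$ and $\mathrm{Ext}^1_\cA(b_1,b_2)$; matching $\CF^0\leftrightarrow$ vertices, $\CF^1\leftrightarrow$ arrows, the point-class part of $\CF^2\leftrightarrow$ relations, and $m_1^{b_1,b_2}\leftrightarrow(d^0,d^1)$ gives \eqref{eq:HF1}. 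The admissibility hypothesis is used precisely here: it ensures all defining relations have length $\ge 2$ and therefore arise from $m_{\ge 2}$ (equivalently, are the coefficients of $m_0^\bb$) rather than from $m_1$ of the undeformed Fukaya algebra, so that $\CF^1$ really has the arrows as a basis and the displayed complex is the correct low-degree model, with no degree-one correction.

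The step I expect to be the main obstacle is this degree-one verification: checking, with all signs, grading shifts and holonomy factors, that the part of $m_1^{b_1,b_2}$ from $\CF^1$ to the point-class part of $\CF^2$ is exactly $d^1$ and that nothing is lost either to higher $m_k$-corrections or to a possible immersed summand of $\CF^2$ (vacuous for plumbings of $T^*\bS^2$ but in need of an argument in general). Concretely this is a Kodaira--Spencer-type identity expressing $m_1^{b_1,b_2}(\xi)$ on that summand as the linearization of the curvature $m_0^\bb$ in the direction $\xi$ — the relation between $m_0^b$ and $m_1^b$ alluded to before the statement — which must be extracted from the explicit description of $m_0^\bb$ and of the deformed $A_\infty$-operations in Section~\ref{section:nc mirror}, paralleling the computations of \cite{HLT24}. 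I would deliberately stop at degree one, since beyond it the bounded Floer complex need not detect the higher (and, as in the ADE case, possibly infinitely many nonzero) $\mathrm{Ext}$-groups of $\cA$, and no clean statement is to be expected there.
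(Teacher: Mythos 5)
Your proposal follows essentially the same route as the paper: degree zero is computed via the unital property of the fundamental classes, and degree one via the identity $m_1^{b_1,b_2}(\xi X_a)=\iota^{b_1,b_2}_\xi\,\partial_{x_a}m_0^{\bb}$ compared against the first two differentials of the standard projective bimodule resolution of the admissible algebra $\cA$ (the paper cites \cite{Ber07}); the ``Kodaira--Spencer identity'' you flag as the main obstacle is exactly this one-line observation, which holds by definition of $m_1^{b_1,b_2}$ and of $m_0^{\bb}$. The only bookkeeping difference is that the relations of $\cA_\bL$ are the coefficients of $m_0^{\bb}$ at \emph{all} degree-two generators, so the full map $\CF^1\to\CF^2$ (not just its point-class component) is the relations differential $d^1$, which disposes of your worry about a possible immersed summand of $\CF^2$ in general.
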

\begin{proof}
	Consider $(\CF^*((\bL,\cE_1),(\bL,\cE_2)),m_1^{b_1,b_2})$, where $\cE_k$ is the trivial bundle of rank $v_k$ over $\bL$. Let $M_i \in \CF^0(\bL,\bL)$ be the unique maximal point of the Morse function on the $i$-th irreducible component which represents the unit of $\bL_i$. More precisely, we follow the homotopy unit construction for the Morse model \cite{FOOO09,CW15,KLZ}, which produces the unit $\one_i$ as a degree $0$ cochain and a homotopy element $h_i$ as a degree $(-1)$ cochain with the differential $d h_i = \one_i - M_i$. Then we take the truncated cochain complex (that has the same cohomology) whose degree-zero part is taken to be the cokernel of the image of $h_i$. We write $(\CF^*((\bL,\cE_1),(\bL,\cE_2)),m_1^{b_1,b_2}):= (\oplus_{p} \Hom({\cE_1|_{t(p)},\cE_2|_{h(p)}})X_{p},m_1^{b_1,b_2})$. 
	
	Denote the deformation parameter of $(\bL,\cE_k)$ by $b_k= \sum_a B_a^kX_a$ for $k=1,2$. 
	Let $V^1= \bigoplus_i \cE_1|_{M_i}$ and $V^2= \bigoplus_i \cE_2|_{M_i}$. Then $(V^1,b_1)$ and $(V^2,b_2)$ give representations of $\cA$. 
	In the following, we want to show  $\HF^0((\bL,\cE_1,b_1),(\bL,\cE_2,b_2))= \Hom_{\cA}(V^1,V^2)$ and $\HF^1((\bL,\cE_1,b_1),(\bL,\cE_2,b_2))=\mathrm{Ext}^1_{\cA}(V^1,V^2).$
	
	Using the unital property of $\sum_i M_i$, we have
	\begin{align*}
		m_1^{b_1,b_2}(\sum_i \xi_i M_i)&= m_2(b_1, \sum_i \xi_i M_i) + m_2(\sum_i \xi_i M_i,b_2)\\
		&= \sum_i \left(\sum_{t(a)=i} (B_a^2 \xi_i)X_a - \sum_{h(c)=i} (\xi_i B_c^1)X_c \right)\\
		&=\sum_a \left(B_a^2 \xi_{t(a)}- \xi_{h(a)}B_a^1  \right)X_a
	\end{align*}
	 where $\xi_i \in \Hom(\cE_1|_{M_i},\cE_2|_{M_i})$ and $X_a$ are generators of $\CF^1(\bL,\bL)$. Therefore, $\HF^0((\bL,\cE_1),(\bL,\cE_2))$ consists of linear maps $(\xi_i)$ such that $B_a^2 \xi_{t(a)}= \xi_{h(a)}B_a^1$, which is the morphism space $\Hom_{\cA}(b_1,b_2)$. This proves \eqref{eq:HF0}.
	 
	 The complex $(\CF^*((\bL,\cE_1),(\bL,\cE_2)),m_1^{b_1,b_2})$ arises from tensoring $\CF^*((\bL,\bb),(\bL,\bb))$ with the corresponding $\cA$-modules, namely, $\CF^*((\bL,\cE_1),(\bL,\cE_2)) \cong (V^1)^* \otimes_{\cA_{\bL}} \CF^*((\bL,\bb),(\bL,\bb)) \otimes_{\cA} V^2$.
	 Observe that 
	 $$m_1^{b_1,b_2}(\xi X_a)=\sum m_{k}(b_1,\ldots, b_1, \xi X_a, b_2,\ldots,b_2)= \iota^{b_1,b_2}_\xi \partial_{x_a} m_0^{\bb},$$ where $X_a \in \CF^1((\bL,\bb),(\bL,\bb))$, $x_a$ is the corresponding arrow, 
	 $\partial_{x_a} m_0^{\bb}$ and $\iota^{b_1,b_2}_\xi$ are defined as follows. Let $P_{ij}$ be the bimodule $\cA e_i \otimes e_j \cA$ for all vertices $i,j$. 
	 Then we define the operator $\partial_{x_a}: kQ \to P_{h(a)t(a)}$ that symbolically replace $x_a$ by the tensor symbol $\otimes$ by the Leibniz rule. 
	 Then $\iota^{b_1,b_2}_\xi: \cA e_i \otimes e_j \cA \to \Hom(V^1_i,V^2_j)$ is defined by the substitution $\bb = b_1$ for the right $\cA$, $\bb = b_2$ for the left $\cA$, and replacing the tensor symbol by $\xi$.
   	
    To compute $\mathrm{Ext}^1_{\cA}(b_1,b_2)$, we can use the following bimodule resolution of the admissible path algebra $\cA$, see for example \cite{Ber07}:
   	$$P_\bullet :=[\cdots \to \bigoplus_{R} P_{h(R)t(R)}\xrightarrow{g} \bigoplus_a P_{h(a)t(a)} \xrightarrow{f} \bigoplus_i P_{ii}] \twoheadrightarrow \cA \to 0,$$ where $R$ is the relation of $\cA$. Here $f_a: P_{h(a)t(a)} \to \bigoplus_i P_{ii}$ takes $e_{h(a)} \otimes e_{t(a)} \mapsto a \otimes e_{t(a)}- e_{h(a)}\otimes a$ and $f=\sum_a f_a$, while $g_R: P_{h(R)t(R)} \to \bigoplus_a P_{h(a)t(a)}$ takes $e_{h(R)} \otimes e_{t(R)} \mapsto \sum_a\partial_a R$ and $g=\sum_R g_R$. 
   	
   	Given any left $\cA$-module $M$, the complex $P_\bullet \otimes_{\cA} M$ provides a projective resolution of $M$. By direct comparison, the first two terms of the induced differentials for $\R\Hom_{\cA}(V^1,V^2)$ coincides with that of $(\CF^*((\bL,\cE_1),(\bL,\cE_2)),m_1^{b_1,b_2})$. Hence, the result holds.
\end{proof}

\begin{rem}
	Geometrically, the admissibility condition of $\cA$ requires that one-gons and two-gons have zero sum contribution to $m_0^{\bb}$. The condition is used only in proving \eqref{eq:HF1} but not \eqref{eq:HF0}.
\end{rem}

\begin{rem}
	If we take $\bMC_\zeta(\bv)$ in place of $\widehat{\bMC}_\zeta(\bv)$, we can still prove an analogous isomorphism as \eqref{eq:HF0} by the unital property:
	$$ \HF^0((\bL,A_1,b_1), (\bL,A_2,b_2))\cong \Hom_{\cA}((A_1,b_1),(A_2,b_2)). $$
\end{rem}

 
In the rest of this subsection, we only use \eqref{eq:HF0}, and so we can either take $\bMC_\zeta(\bv)$ or $\widehat{\bMC}_\zeta(\bv)$. For convenience we only make the statements for $\widehat{\bMC}_\zeta(\bv)$. The following is immediate from the $A_\infty$-operations over the Maurer-Cartan spaces.
\begin{cor}
	Let $b_k \in \widehat{\bMC}_\zeta(\bv_k)$ for $k=1,2,3$. 
	$$m_2^{b_1,b_2,b_3}:\HF^0((\bL,b_1), (\bL,b_2)) \otimes \HF^0((\bL,b_2), (\bL,b_3)) \to \HF^0((\bL,b_1), (\bL,b_3))$$ coincides with the composition of morphisms in $\Hom_{\cA}(b_i,b_{i+1})$ for the quiver algebra $\cA$.
\end{cor}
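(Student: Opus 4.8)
The plan is to leverage Proposition \ref{prop: Hom-Ext} and the fact that the localized mirror functor $\cF^{(\bL,\bb)}$ is an $A_\infty$-functor. First I would recall that the isomorphism \eqref{eq:HF0} is realized concretely by the cocycle representatives $\sum_i \xi_i M_i$, where each $M_i$ is the unique maximal point of the Morse function on $\hat\bL_i$ serving as a homotopy unit. So given $b_k \in \widehat{\bMC}_\zeta(\bv_k)$ for $k=1,2,3$ and cocycles $\alpha = \sum_i \xi_i M_i \in \HF^0((\bL,b_1),(\bL,b_2))$ and $\beta = \sum_j \eta_j M_j \in \HF^0((\bL,b_2),(\bL,b_3))$, corresponding to quiver module maps $(\xi_i) \in \Hom_\cA(b_1,b_2)$ and $(\eta_j) \in \Hom_\cA(b_2,b_3)$, the task reduces to computing $m_2^{b_1,b_2,b_3}(\alpha,\beta)$ and identifying its class with the composite $(\eta_i \xi_i) \in \Hom_\cA(b_1,b_3)$.

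The key computation is a direct expansion of the deformed product, exactly parallel to the one carried out in the proof of Proposition \ref{prop: Hom-Ext}. Writing $m_2^{b_1,b_2,b_3}$ as the sum over insertions of the bounding cochains $b_1,b_2,b_3$ into $m_k$ and then applying Equation \eqref{eq:mk}, I would use the homotopy-unit property of $M_i$: since $M_i$ represents $\one_{\bL_i}$ up to the homotopy correction $h_i$, all higher terms $m_k$ with $k \geq 3$ that have $M_i$ as an input, together with terms where both bounding cochains sit on the same side of $M_i$, either vanish or are exact. The only surviving contribution is the undeformed $m_2(\sum_i \xi_i M_i, \sum_j \eta_j M_j)$, and by the unitality of $M_j$ on the second slot (resp. $M_i$ on the first) this equals $\sum_i (\eta_i \xi_i) M_i$, which is precisely the cocycle representing the composite module homomorphism. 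Passing to cohomology classes via the truncated complex of Proposition \ref{prop: Hom-Ext} (where degree zero is the cokernel of $h_i$) gives the claimed equality in $\Hom_\cA(b_1,b_3)$.

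Alternatively — and this is cleaner and worth stating — one can argue abstractly: by the theorem of \cite{CHL21} the assignment $(\bL,b) \mapsto$ the corresponding $\cA$-module, together with \eqref{eq:HF0}, is induced by the restriction of the $A_\infty$-functor $\cF^{(\bL,\bb)}$ to the subcategory of Maurer-Cartan deformations of $\bL$; an $A_\infty$-functor preserves $m_2$ up to the homotopy governed by $\cF_2$, so on cohomology it is strictly multiplicative, hence $H(m_2^{b_1,b_2,b_3})$ agrees with composition of the images, which are module homomorphisms composed in the usual way. The main (minor) obstacle is bookkeeping: one must check that the homotopy-unit corrections $h_i$ and the truncation do not introduce spurious terms at the chain level, i.e.\ that the chain-level equality $m_2(\sum_i\xi_i M_i,\sum_j\eta_j M_j)=\sum_i(\eta_i\xi_i)M_i$ holds on the nose in the Morse model, or at worst up to $m_1^{b_1,b_3}$-exact terms; this is routine given the construction of the homotopy unit in \cite{FOOO09,CW15,KLZ} but needs to be spelled out to be fully rigorous.
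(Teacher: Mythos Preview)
Your proposal is correct and follows the same approach the paper implicitly uses: the paper states only that the corollary ``is immediate from the $A_\infty$-operations over the Maurer-Cartan spaces'' and gives no further argument, so your direct computation via the unital property of the $M_i$ (exactly parallel to the computation of $m_1^{b_1,b_2}$ in the proof of Proposition~\ref{prop: Hom-Ext}) is precisely what is intended. Your worry about homotopy-unit corrections is already handled by the paper's setup---after the truncation described there the degree-zero cochains are strict units, so the chain-level identity $m_2(\sum_i\xi_iM_i,\sum_j\eta_jM_j)=\sum_i(\eta_i\xi_i)M_i$ holds on the nose and all higher $m_k$ terms with a unit input vanish; the alternative $A_\infty$-functor argument is unnecessary here.
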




Recall that in the construction of cohomological Hall algebras \cite{KS11}, an essential ingredient is the moduli stack (or space) of extensions. Lagrangian Floer theory admits an analogous structure once the following assumption is satisfied. We will see that this is the case for framed Lagrangians constructed by plumbing of two-spheres.

\begin{assump}
	There exists stability conditions $\zeta_{\bv}$ for $\widehat{\bMC}(\bv)$ for all rank vectors $\bv$ such that for any
	$b_i \in \widehat{\bMC}_{\zeta_{\bv_i}}(\bv_i)$ $(i=1,2)$, every nonzero homomorphism in $\HF^0((\bL,\bv_1,b_1),(\bL,\bv_2,b_2)) \cong \Hom_{\cA}(b_1,b_2)$ is \emph{injective} if $\bv_1 \le \bv_2$, and surjective if $\bv_1 \ge \bv_2$.
\end{assump}

\begin{prop}
   	Let $(\bL,b_k)$ be a family of Lagrangian branes of rank $\bv^k$ for $k=1,2,3$ where $\bv^2=\bv^1 +\bv^3$. Under the above assumption, the fiberwise kernel of 
   	$$m_2: \HF^0((\bL,b_1), (\bL,b_2)) \otimes \HF^0((\bL,b_2), (\bL,b_3)) \to \HF^0((\bL,b_1), (\bL,b_3)), $$ 
   	as a sheaf over $\prod_{k=1}^{3}\widehat{\bMC}_\zeta(\bv^k)$, is supported on the locus consisting of $(b_1,b_2,b_3)$ such that 
   	$$0\to b_1 \hookrightarrow b_2 \xrightarrowdbl{} b_3 \to 0$$ 
   	forms a short exact sequence of $\cA_L$-modules. 
   	In other words, for any fixed $b_1$ and $b_3$, the fiberwise kernel is supported precisely on the extensions of $b_3$ by $b_1$.
\end{prop}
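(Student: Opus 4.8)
Write $\cA=\cA_{\bL}$. The plan is to reduce $m_2$ to the ordinary composition product in the category of finite-dimensional $\cA$-modules and then read off the support from the injectivity and surjectivity constraints supplied by the Assumption. By Proposition~\ref{prop: Hom-Ext} the fibrewise isomorphisms $\HF^0((\bL,b_i),(\bL,b_j))\cong\Hom_{\cA}(b_i,b_j)$ of \eqref{eq:HF0} are compatible with the gauge action, hence globalize to an isomorphism of the corresponding tautological sheaves over $\prod_{k}\widehat{\bMC}_\zeta(\bv^k)$, under which $m_2$ becomes the composition map $\mu\colon\phi\otimes\psi\mapsto\psi\circ\phi$ (the corollary following Proposition~\ref{prop: Hom-Ext}). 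So it is enough to prove, fibrewise over $(b_1,b_2,b_3)$, that $\mu\colon\Hom_{\cA}(b_1,b_2)\otimes\Hom_{\cA}(b_2,b_3)\to\Hom_{\cA}(b_1,b_3)$ has nonzero kernel precisely when there exists a short exact sequence $0\to b_1\hookrightarrow b_2\twoheadrightarrow b_3\to 0$. Throughout I use that $\bv^1\le\bv^2$ and $\bv^2\ge\bv^3$ (as $\bv^2=\bv^1+\bv^3$), so by the Assumption every nonzero $\phi\in\Hom_{\cA}(b_1,b_2)$ is injective and every nonzero $\psi\in\Hom_{\cA}(b_2,b_3)$ is surjective; and that $\dim b_2-\dim b_3=\dim b_1$, so an injection $b_1\hookrightarrow b_2$ and a surjection $b_2\twoheadrightarrow b_3$ with vanishing composite automatically have equal image and kernel, hence form a short exact sequence.

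The implication ``short exact sequence $\Rightarrow$ nonzero fibrewise kernel'' is immediate: given $0\to b_1\xrightarrow{\iota}b_2\xrightarrow{p}b_3\to 0$ exact, the pure tensor $\iota\otimes p$ is a nonzero element of $\ker\mu$ at that point. For the reverse implication the engine is the following vanishing: if no short exact sequence $0\to b_1\to b_2\to b_3\to 0$ exists, then $\Hom_{\cA}(\operatorname{coker}\phi,b_3)=0$ for every nonzero $\phi\in\Hom_{\cA}(b_1,b_2)$. Indeed such a $\phi$ is injective with $\dim\operatorname{coker}\phi=\dim b_3$; a nonzero $\bar{\psi}\colon\operatorname{coker}\phi\to b_3$ would pull back along $b_2\twoheadrightarrow\operatorname{coker}\phi$ to a nonzero — hence surjective — element of $\Hom_{\cA}(b_2,b_3)$, so $\bar{\psi}$ would be a surjection between modules of the same dimension, i.e.\ an isomorphism, and $0\to b_1\xrightarrow{\phi}b_2\twoheadrightarrow\operatorname{coker}\phi\cong b_3\to 0$ would be short exact — a contradiction. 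Applying $\Hom_{\cA}(-,b_3)$ to $0\to b_1\xrightarrow{\phi}b_2\to\operatorname{coker}\phi\to 0$ then shows $(-\circ\phi)\colon\Hom_{\cA}(b_2,b_3)\to\Hom_{\cA}(b_1,b_3)$ is injective for every nonzero $\phi$; dually, using kernels of surjections in place of cokernels of injections, $(\psi\circ-)\colon\Hom_{\cA}(b_1,b_2)\to\Hom_{\cA}(b_1,b_3)$ is injective for every nonzero $\psi$.

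The remaining step — upgrading these ``partial'' injectivities to injectivity of $\mu$ on the whole tensor product, so that the fibrewise kernel vanishes off the extension locus — is the main obstacle. I would take a kernel element of minimal tensor rank, $\sum_{i=1}^{n}\phi_i\otimes\psi_i$ with $\{\phi_i\},\{\psi_i\}$ linearly independent: the case $n=1$ is ruled out by injectivity of $-\circ\phi_1$, and $n\ge 2$ would have to be excluded by confronting the relation $\sum_i\psi_i\circ\phi_i=0$ with the two families of injectivity statements, which is the delicate point. This difficulty evaporates in the setting of interest, where $\bL^{\fr}$ is a plumbing of two-spheres and at least one of $\Hom_{\cA}(b_1,b_2)$, $\Hom_{\cA}(b_2,b_3)$ is at most one-dimensional — a nonzero morphism of framed branes of the relevant dimension vectors being pinned down up to scalar by its restriction to the framing vertex; then $\mu$ is, up to isomorphism, one of the partial composition maps already shown to be injective, and $\mu$ fails to be injective exactly when there is a short exact sequence. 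Granting injectivity of $\mu$ away from the extension locus, that locus is closed — it is where the vector bundle map $\mu$ drops rank — and is precisely the support of the fibrewise kernel; the refinement for fixed $b_1,b_3$ is the restriction of this to $\{b_1\}\times\widehat{\bMC}_\zeta(\bv^2)\times\{b_3\}$, whose points are exactly the extensions of $b_3$ by $b_1$. This completes the plan.
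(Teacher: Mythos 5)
Your core argument coincides with the paper's: identify $m_2$ with composition of $\cA_{\bL}$-module homomorphisms via Proposition \ref{prop: Hom-Ext} and the corollary following it, invoke the Assumption to make every nonzero $f\in\Hom_{\cA}(b_1,b_2)$ injective and every nonzero $g\in\Hom_{\cA}(b_2,b_3)$ surjective, and use $\bv^2=\bv^1+\bv^3$ to upgrade $g\circ f=0$ to $\Im f=\Ker g$, hence to a short exact sequence. The paper's proof is exactly this computation applied to a single pair $(f,g)$, i.e.\ to a decomposable tensor; it is silent on kernel elements of tensor rank $\geq 2$. So the issue you single out as ``the main obstacle'' is genuine, but it is equally unaddressed in the paper: a nonzero element $\sum_i f_i\otimes g_i$ of the fiberwise kernel need not yield a decomposable kernel element, and your (correct) partial injectivities of $-\circ f$ and $g\circ -$ do not by themselves force injectivity of the full composition map off the extension locus --- a bilinear map can be injective in each slot separately while the induced map on the tensor product still has kernel. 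Your vanishing lemma $\Hom_{\cA}(\mathrm{coker}\,f,b_3)=0$ in the absence of a short exact sequence, and its dual for kernels of surjections, are correct and go beyond what the paper records. Where your write-up does not actually close the gap is the final patch: the claim that one of the two $\Hom$ spaces is at most one-dimensional is asserted rather than proved, is specific to the framed plumbing setting, and does not cover the proposition in the generality in which it is stated (an arbitrary $\bL$ satisfying the Assumption, with arbitrary $\bv^1,\bv^3$). To match what is actually established here and in the paper, one should either restrict the conclusion to points where the fiberwise kernel contains a nonzero decomposable element, or supply an argument ruling out rank-$\geq 2$ relations $\sum_i g_i\circ f_i=0$ when no short exact sequence exists.
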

\begin{proof}
	Under the above assumption, any nonzero elements $f$ and $g$, where $$(f,g) \in \HF^0((\bL,\cE_1,b_1),(\bL,\cE_2,b_2)) \otimes \HF^0((\bL,\cE_2,b_2), (\bL,\cE_3,b_3)),$$ determine a sequence of $\cA_L$-modules $b_1 \xhookrightarrow{f} b_2 \xrightarrowdbl{g}b_3$. It remains to show $g\circ f=0$ and $\mathrm{Ker}(g)=\mathrm{Im}(f)$, which follow from the condition that $m_2^{b_1,b_2}(f,g)=0$ together with the dimension constraint.
\end{proof}

\subsection{Holomorphic symplectic structure on Maurer-Cartan space of a Lagrangian immersed surface}

Now, let's focus on $\dim \bL = 2$. For a compact Riemann surface of genus $g$, the moduli space of $G=\GL(n,\C)$-flat connections called the character variety has a canonical holomorphic symplectic structure \cite{Atiyah-Bott, Goldman}. Its fundamental group can be written as $\langle a_1,\ldots,a_g, a_1',\ldots,a_g' \rangle / (\prod_{i=1}^g a_ia_i'a_i^{-1}(a_i')^{-1})$, so that the moduli space of flat connections consists of tuples of matrices $A=(A_i,A_i':i=1,\ldots,g) \in G^{2g}$ satisfying the relation $\prod_{i=1}^g A_iA_i'A_i^{-1}(A_i')^{-1} = \Id$, modulo conjugation $gAg^{-1} = (gA_ig^{-1},gA_i'g^{-1}:i=1,\ldots,g)$. To fit with our previous setup, we can fix a perfect Morse function whose unstable submanifolds of $\deg=1$ critical points are these cycles $a_i,a_i'$.

The symplectic structure on the moduli space can be understood by the beautiful theory of quasi-Hamiltonian manifolds and group-valued moment maps found by \cite{AMM} for compact groups and \cite{Bursztyn-Crainic, ABM} for non-compact Lie groups using Dirac structures. We give a very short review below.

For a Lie group $G$, the double $\mathbf{D}(G)=G\times G$ is equipped with the two-form
$$ \omega_{\mathbf{D}(G)} = \frac{1}{2}\left(a^*\theta^L \cdot b^*\theta^R + a^*\theta^R\cdot b^*\theta^L + (ab)^*\theta^L \cdot (a^{-1}b^{-1})^*\theta^R\right) $$
where $\theta^L,\theta^R$ are the left-invariant and right-invariant Maurer-Cartan $\mathfrak{g}$-valued one-form of $G$ respectively, the dot is induced by the Killing form of $\mathfrak{g}$, and $a,b:\mathbf{D}(G) \to G$ are the projections to the left and right factors respectively. Then $\Phi: \mathbf{D}(G) \to G$ defined by $\Phi(a,b) = aba^{-1}b^{-1}$ satisfies the group-valued moment map conditions with respect to $\omega_{\mathbf{D}(G)}$.

The quasi-Hamiltonian structure on the product $(\mathbf{D}(G))^g$ is constructed by the fusion product. For two quasi-Hamiltonian manifolds $(M_i,\omega_i,\Phi_i)$, their fusion is defined as
$$ (M_1\times M_2, \omega_1+\omega_2+\frac{1}{2}\Phi_1^*\theta^L\cdot \Phi_2^*\theta^R,\Phi_1\Phi_2). $$
For the quasi-Hamiltonian manifold $(\mathbf{D}(G)^g,\omega,\Phi)$, the GIT quotient $\Phi^{-1}\{\Id\} \sslash_\zeta G$ gives the moduli space of $G=\GL(n,\C)$-flat connections as a holomorphic symplectic variety.

Next, we consider a nodal immersed Lagrangian as in Definition \ref{def: MChat}, whose normalization is a union of Riemann surfaces. We take a perfect Morse function in each component as stated above. For the consideration of this subsection, we work over $\C = \Lambda_{\geq 0} / T^{>0}$, in other words, we only consider contributions of constant polygons in $m_k^b$ whose coefficients are taken as formal power series in the matrix coefficients of $b$. We denote the resulting $A_\infty$ operations by $m_{k,0}^b$. 

Using the result of \cite{FOOO-involution} and as in \cite{HKL23,HLT24}, for the moduli spaces of constant polygons at each nodal point, we have taken the Kuranishi perturbations that respect the local anti-symplectic involution around the nodal point, so that the discs in $m_{2k,0}(U,V,\ldots,U,V)$ are related with those in $m_{2k,0}(V,U,\ldots,V,U)$ by a negative sign.

\begin{prop}
	$\omega(X,Y) := \tr (m^b_{2,0}(X,Y),\one)$ gives a formal symplectic form (valued in formal power series) on $CF^1((\bL,\bv),(\bL,\bv))$. Moreover, $\mu = (m_{0,0}^b,\one)$ gives a moment map and hence $\widehat{\bMC}_\zeta(\bv)$ has a (formal) symplectic structure.
\end{prop}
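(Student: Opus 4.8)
The plan is to deduce the statement from the general theory of quasi-Hamiltonian reduction reviewed above, applied componentwise to the normalization of $\bL$ and then ``glued'' along the nodal sectors by a Floer-theoretic computation. First I would show that $\omega(X,Y) = \tr(m_{2,0}^b(X,Y),\one)$ is skew-symmetric: decompose $CF^1((\bL,\bv),(\bL,\bv))$ into the part $C^1(\hat\bL,\End(\cE))$ coming from Morse cochains on the (simply-connected, in our situation) components and the immersed part $CF^1_{\mathrm{imm}}$; on the component part the pairing is the standard cup-product pairing on a surface, which is skew; on the immersed part, skew-symmetry is exactly the content of the sign relation between the constant-polygon moduli in $m_{2k,0}(U,V,\dots)$ and $m_{2k,0}(V,U,\dots)$ produced by the involution-equivariant Kuranishi perturbations of \cite{FOOO-involution} as set up just before the statement. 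So this step is essentially a bookkeeping of signs, invoking the preceding paragraph.

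Next I would verify non-degeneracy (formally, i.e.\ over the ring of formal power series in the matrix entries of $b$). At $b=0$ this reduces to non-degeneracy of the pairing $\tr(m_{2,0}^0(X,Y),\one)$, which on each genus-$g$ surface component is Poincar\'e duality pairing $H^1\otimes H^1\to H^2$ twisted by $\End(\cE)$, and on the immersed sectors is the perfect pairing between the $\Hom(\cE|_{\iota_-S_a},\cE|_{\iota_+S_a})$ and $\Hom(\cE|_{\iota_+S_a},\cE|_{\iota_-S_a})$ factors appearing in \eqref{eq:CF}; both are manifestly perfect. Since non-degeneracy is an open condition and we work with formal power series, it persists for general $b$. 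The closedness ($d\omega = 0$, in the appropriate formal sense) I would get from the $A_\infty$-relations: differentiating the quadratic $A_\infty$-associativity identity $\sum \pm\, m(\dots m(\dots)\dots) = 0$ and pairing with $\one$ yields the cocycle condition for $\omega$ as a form on the affine space $CF^1$, exactly as in the quasi-Hamiltonian computations of \cite{AMM,AMM} (and as used implicitly in \cite{CHL21}); this is the standard ``$m_0$ is a moment map, $m_2$ is a symplectic form, $m_1$ is its linearization'' package.

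The moment-map statement is the crux. I would show $\mu = (m_{0,0}^b,\one)$ is a group-valued (or, in the formal/Lie-algebra picture of $\widehat{\bMC}$, ordinary) moment map for the $\cG = \prod_i \GL(v_i)$-action by conjugation, in the sense that $d\langle\mu,\xi\rangle = \iota_{X_\xi}\omega$ for $\xi\in\mathrm{Lie}(\cG)$ and that $\mu$ is $\cG$-equivariant. Equivariance is immediate from the equivariance of the $A_\infty$-operations noted after Definition \ref{def: MC}. The infinitesimal identity $d\langle\mu,\xi\rangle = \iota_{X_\xi}\omega$ is precisely the Floer-theoretic relation $m_1^b(X) = \iota_\xi\,\partial_x m_0^b$ exploited in the proof of Proposition \ref{prop: Hom-Ext} (specialized to $b_1=b_2=b$), combined with the fact that the vector field $X_\xi$ generated by $\xi$ is $m_1^b(\xi\cdot\one)$ up to the homotopy-unit correction. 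Concretely: $\langle d\mu(X),\xi\rangle = \tr(m_1^b(X),\xi\,\one)$ by Leibniz-differentiating $m_0^b$, while $\iota_{X_\xi}\omega(X) = \tr(m_{2,0}^b(X, m_1^b(\xi\one)),\one) = \tr(m_1^b(X),\xi\one)$ after using the cyclic symmetry of $\tr(m_2(-,-),\one)$ and the $A_\infty$-relation relating $m_2(-,m_1(-))$ to $m_1 m_2$. Granting this, the quasi-Hamiltonian reduction theorem of \cite{AMM,ABM} (in the $\GL(n,\C)$-form recalled above, assembled by fusion over the components and the nodes) gives the holomorphic symplectic structure on $\mu^{-1}(\mathrm{pt})\sslash_\zeta\cG = \widehat{\bMC}_\zeta(\bv)$ at stable points, which is what we want.

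The main obstacle I expect is keeping the nodal/immersed contributions honest: the fusion-product formula for $\omega$ on a product of doubles has an explicit cross term $\tfrac12\Phi_1^*\theta^L\cdot\Phi_2^*\theta^R$, and I must check that the immersed sectors of $\bL$ reproduce exactly this cross term (and its higher-$b$ corrections) rather than something merely cohomologous — i.e.\ that the involution-equivariant perturbations not only give the right signs but the right \emph{normalization}, so that $m_{2,0}^b$ on the immersed part literally is the fusion two-form. This is where the careful Kuranishi setup of \cite{HKL23,HLT24} and \cite{FOOO-involution} has to be quoted precisely; everything else is formal manipulation of $A_\infty$-identities paired against the unit.
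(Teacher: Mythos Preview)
Your approach diverges substantially from the paper's, and has a genuine gap.

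The paper's proof is much more direct: it first observes (using the involution symmetry) that the contribution of each nodal point to $(m_{0,0}^b,\one)$ has the form $\bigl(1+\sum_i a_i(XY)^i\bigr)XY - \bigl(1+\sum_i a_i(YX)^i\bigr)YX$, and then performs a \emph{formal change of coordinates} $(X,Y)\mapsto(\tilde X,\tilde Y)$ that puts this into the standard commutator form $\sum_a(\tilde X_a\tilde Y_a - \tilde Y_a\tilde X_a)$. Since $\tr(m_{2,0}^b(X,Y),\one)$ is the formal second derivative of $\tr(m_{0,0}^b,\one)$, in the new coordinates $\omega$ becomes the \emph{constant} standard form $\sum_a \tr(d\tilde X_a\wedge d\tilde Y_a)$; closedness, non-degeneracy, and the moment-map property are then all immediate (it is the linear symplectic vector space with the standard $\GL$-action). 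The coordinate change is gauge-equivariant because it is a formal power series in the matrix entries. No quasi-Hamiltonian machinery is used for this proposition at all.

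The gap in your argument is the reliance on cyclic symmetry of $\tr(m_2(-,-),\one)$ in your moment-map computation. The paper explicitly does \emph{not} assume cyclic symmetry here; see the Remark immediately following the proof, which says that \emph{if} one imposes cyclic symmetry then $(m_{0,0}^b,\one)$ and $\tr(m_{2,0}^b(X,Y),\one)$ already have the standard form---i.e.\ cyclic symmetry is a stronger hypothesis that would make the coordinate change unnecessary. Without it, your identity $\tr(m_{2,0}^b(X,m_1^b(\xi\one)),\one)=\tr(m_1^b(X),\xi\one)$ is not available, and the closedness argument from ``differentiating the $A_\infty$-associativity identity'' is too vague to stand on its own. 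Separately, you are conflating this proposition with the next theorem: the AMM/fusion machinery and group-valued moment maps are invoked by the paper only for $\bMC_\zeta(\bv)$ (which carries flat connections), not for $\widehat{\bMC}_\zeta(\bv)$; your ``main obstacle'' about fusion cross-terms is therefore a red herring here, and ordinary Marsden--Weinstein reduction is all that is needed once the moment-map property is established.
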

\begin{proof}
	By the local anti-symplectic involution symmetry and the fact that $m_{k,0}(X_1,\ldots,X_k)=0$ whenever two adjacent inputs are equal ($X_i=X_{i+1}$ for some $i$), we have $\omega(X,Y)=\omega(Y,X)$. Moreover, the contribution in $(m_0^b,\one)$ of the immersed sectors $X$ and $Y$ at a nodal point is of the form
	$$ \left(\left(1+ \sum_{i=1}^{\infty}  a_i(XY)^i\right)XY-\left(1+ \sum_{1}^{\infty}  a_i(YX)^i\right) YX\right) $$
	for some $a_i \in \Q$. 
	By a change of coordinates, we can simplify $(m_{0,0}^b,\one)$ to $\sum_{a} (\tilde{X}_a\tilde{Y}_a - \tilde{Y}_a\tilde{X}_a)$ (where $\tilde{X}_a,\tilde{Y}_a$ are matrices indexed by the nodal points $a$). Since $m_{2,0}^b(X,Y)$ arises as the formal second derivative of $m_{0,0}^b$, $\tr (m_{2,0}^b(X,Y),\one)$ becomes the standard symplectic form $\sum_a \tr(d\tilde{X}_a \wedge d\tilde{Y}_a)$. Thus, $\omega$ is closed and non-degenerate. The change of coordinates is expressed in formal series of matrices, and hence is gauge-equivariant. In such a standard coordinate, it is obvious that $\mu = (m_{0,0}^b,\one)$ is the moment map. Then $\widehat{\bMC}_\zeta(\bv)$ has the quotient symplectic structure.
\end{proof}

\begin{rem}
	While the Hamiltonian structure is coordinate-free, if the Floer theory is made to respect cyclic symmetry \cite{Fuk10}, then $(m_{0,0}^b,\one)$ and $\tr (m_{2,0}^b(X,Y),\one)$ will have the standard form.
\end{rem}

Now we consider the moduli space $\bMC_\zeta(\bv)$ that parametrizes both flat connections and Maurer-Cartan elements at the immersed sectors.  Using quasi-Hamiltonian reduction, we can prove the following.
\begin{thm}
	There exists an open subset $\bMC^\circ_\zeta(\bv) \subset \bMC_\zeta(\bv)$ containing all the flat connections and the zero Maurer-Cartan elements such that it has a holomorphic symplectic structure.
\end{thm}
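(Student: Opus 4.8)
The plan is to present $\bMC_\zeta(\bv)$, in a neighborhood of the flat connections and the zero Maurer--Cartan elements, as a quasi-Hamiltonian reduction built by fusing the character-variety pieces of the surface components of $\hat{\bL}$ with the exponentiation of the immersed-sector Hamiltonian space furnished by the preceding proposition. Write $\hat{\bL}_1,\dots,\hat{\bL}_r$ for the components of $\hat{\bL}$, with $\hat{\bL}_j$ of genus $g_j$ carrying a flat bundle of rank $v_j$, so $\cG=\prod_j\GL(v_j)$. For each $j$ I would take the $g_j$-fold fusion $\mathbf{D}(\GL(v_j))^{g_j}$ with its quasi-Hamiltonian two-form and group-valued moment map $\Phi_j(A)=\prod_i A_iA_i'A_i^{-1}(A_i')^{-1}$, where $A_i,A_i'$ are the holonomies along the unstable submanifolds of the $\deg=1$ critical points of the chosen perfect Morse function on $\hat{\bL}_j$, exactly as in the review above; the external product over $j$ then gives a quasi-Hamiltonian $\cG$-space $(\mathcal{D},\omega_\mathcal{D},\Phi)$ with $\Phi=(\Phi_j)_j:\mathcal{D}\to\cG$. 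We deliberately do not reduce the $\mathbf{D}(\GL(v_j))^{g_j}$ individually, since the immersed sectors couple the different $v_j$.

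For the immersed part, the preceding proposition provides the symplectic form $\tr(m^b_{2,0}(\cdot,\cdot),\one)$ on a neighborhood of the origin of $\CF^1((\bL,\bv),(\bL,\bv))$; restricting to the immersed summand and to an open $E^\circ\ni 0$ on which the normalizing change of coordinates converges, one gets a Hamiltonian $\cG$-space $(E^\circ,\omega_{\mathrm{imm}},\mu_{\mathrm{imm}})$ with $\mu_{\mathrm{imm}}=(m^b_{0,0},\one)$ and $\mu_{\mathrm{imm}}(0)=0$. Shrinking $E^\circ$ further so that $\mu_{\mathrm{imm}}(E^\circ)$ lies in a ball in $\mathrm{Lie}(\cG)$ on which $\exp$ is a biholomorphism onto its image, the exponentiation construction of Alekseev--Malkin--Meinrenken turns $(E^\circ,\omega_{\mathrm{imm}},\mu_{\mathrm{imm}})$ into a quasi-Hamiltonian $\cG$-space $(E^\circ,\omega_{\mathrm{imm}}+\varpi,\exp\circ\mu_{\mathrm{imm}})$ for the appropriate correction two-form $\varpi$. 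Fusing it with $\mathcal{D}$ gives a quasi-Hamiltonian $\cG$-space
\[
(\mathcal{M},\omega_\mathcal{M},\Psi)=\Big(\mathcal{D}\times E^\circ,\ \omega_\mathcal{D}+\omega_{\mathrm{imm}}+\varpi+\tfrac12\,\Phi^*\theta^L\cdot(\exp\circ\mu_{\mathrm{imm}})^*\theta^R,\ \Phi\cdot(\exp\circ\mu_{\mathrm{imm}})\Big),
\]
and by quasi-Hamiltonian reduction (\cite{AMM}; in the holomorphic, Dirac-geometric setting \cite{Bursztyn-Crainic, ABM}) the GIT quotient $\Psi^{-1}\{\Id\}\sslash_\zeta\cG$ carries a holomorphic symplectic structure on its stable locus.

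Finally I would identify this reduction with an open subset $\bMC^\circ_\zeta(\bv)\subset\bMC_\zeta(\bv)$: take $\bMC^\circ_\zeta(\bv)$ to be the locus where $\mu_{\mathrm{imm}}$ takes small values (in particular, where all of the above is valid), which is open and contains every flat connection — where the immersed part of $b$ vanishes — and the zero Maurer--Cartan elements. On this locus I would unwind the constraint $\Psi=\Id$ block by block, $\Phi_j(A)\exp(\mu_{\mathrm{imm},j}(b))=\Id$, and use that $\exp$ is a local biholomorphism near $0$ to match it with the equations cutting out $\mathrm{MC}(\bv)$, namely the $\pi_1(\hat{\bL}_j)$-relations together with $m_0^b=0$; since the exponentiation and the normalizing coordinate change are gauge-equivariant, the $\cG$-quotients agree and the reduced symplectic form descends to $\bMC^\circ_\zeta(\bv)$.

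The hard part is exactly this last step. As the paper's earlier discussion of $\widehat{\bMC}_\zeta(\bv)$ versus $\bMC_\zeta(\bv)$ anticipates, $\bMC_\zeta(\bv)$ imposes the non-commuting $\pi_1$-relations and the Maurer--Cartan equation $m_0^b=0$ as separate conditions, whereas quasi-Hamiltonian reduction delivers only the single fused group-valued constraint $\Phi\cdot(\exp\circ\mu_{\mathrm{imm}})=\Id$; controlling the passage between these two descriptions is possible only where the immersed obstruction is small and $\exp$ is injective, and this is precisely what prevents the argument from covering all of $\bMC_\zeta(\bv)$. A second, more routine technical point is that the exponentiation and fusion of Alekseev--Malkin--Meinrenken are a priori $C^\infty$ constructions, and here they must be carried out holomorphically (indeed formally); this is legitimate because all the relevant data — the operations $m^b_{k,0}$, the normalizing coordinate change, and $\exp$ — are given by convergent matrix power series near the origin.
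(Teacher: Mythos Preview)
Your proposal follows essentially the same route as the paper: exponentiate the immersed-sector Hamiltonian space to a quasi-Hamiltonian $\cG$-space on a neighborhood $U\ni 0$ (via $\omega+\mu^*\tau$ and $\exp\circ\mu$), fuse with the doubles $\prod_j\mathbf{D}(\GL(v_j))^{g_j}$, and perform quasi-Hamiltonian reduction at the identity. The paper's argument is terser but structurally identical.

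The issue you single out as ``the hard part''---that the fused constraint $\Phi\cdot\exp(\mu_{\mathrm{imm}})=\Id$ is a single $\cG$-valued equation, whereas $\mathrm{MC}(\bv)$ is cut out by the \emph{two} separate conditions $\Phi_j=\Id$ and $\mu=0$---is a genuine point, and the paper's proof does not address it either: it simply asserts that the reduction at the identity ``gives a symplectic structure on the quotient $\bMC^\circ_\zeta(\bv)$'' without reconciling the two descriptions. So you have matched the paper's approach and, in addition, correctly located where the argument is loose. Note in particular that a naive dimension count shows $\Psi^{-1}(\Id)\sslash_\zeta\cG$ has dimension $\dim\cG$ larger than the locus cut out by both constraints separately, so the identification cannot be literal as an open inclusion without further input; your instinct that this is where the restriction to small $b$ and the particular form of the theorem's conclusion enter is the right one, though neither you nor the paper spells out the resolution.
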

\begin{proof}
	This is the quotient by $\prod_i GL(d_i,\C)$ of the subvariety in $(\prod_i G^{2g_i})\,\times\, \CF_{\textrm{imm}}^1((\bL,\bv),(\bL,\bv))$ defined by the equations $\prod_{p=1}^{g_i} A_pA_p'A_p^{-1}(A_p')^{-1} = \Id$ for components $\bL_i$ and $\mu(b)=(m_{0,0}^b,\one) = 0$. 
	
	To apply the quasi-Hamiltonian reduction procedure, we compose $\mu$ with the exponential map to get a group-valued moment map $\exp \circ \mu$, where $\CF_{\textrm{imm}}^1((\bL,\bv),(\bL,\bv))$ is equipped with the two-form $\omega + \mu^*\tau$ where $\omega$ is defined by $m_{2,0}^b$ as above and $\tau$ is the primitive two-form on $\mathfrak{g}\cong \mathfrak{g}^*$ that satisfies $d\tau = \exp^* \eta$ for the Cartan three-form on $G$. $\exp \circ \mu$ is a group-valued moment map on a neighborhood $U \ni 0$ in $\CF_{\textrm{imm}}^1((\bL,\bv),(\bL,\bv))$, whose image under $\mu$ is in the open subset $\mathfrak{g}_{\sharp} \subset \mathfrak{g}$ on which $\exp$ is a local diffeomorphism. 
	
	By the fusion product of quasi-Hamiltonian manifolds, we obtain a quasi-Hamiltonian structure on $\prod_i G^{2g_i} \times U$ with the group-valued moment map given by the product of the individual moment maps. The quasi-Hamiltonian reduction at the identity then gives a symplectic structure on the quotient $\bMC^\circ_\zeta(\bv)$.
\end{proof}

\section{Construction of Hecke correspondence from Lagrangian Floer theory} \label{sec: H}

Having set up the general framework, we now focus on Nakajima quiver varieties produced from Lagrangian branes in a convex non-compact symplectic 4-fold. We will construct Hecke correspondences and holomorphic Lagrangians via mirror symmetry.

\subsection{Framed Lagrangian branes} \label{sec: framed}
In this subsection, we briefly recall the definition of framed Lagrangian branes in a convex non-compact symplectic manifold introduced in \cite{HLT24}. The main statement is that all Nakajima quiver varieties are the Maurer-Cartan deformation spaces of certain framed Lagrangian branes. 

Let $(M,\omega)$ be a (non-compact) convex symplectic manifold of dimension $2n$ (see for instance \cite[(7b)]{Sei08} in the exact setting, and \cite[Section 3]{RS2017} in the monotone setup). In other words, $M$ is assumed to be conical at infinity: there exists a compact subset $M_c \subset M$ such that its complement can be identified with an infinite cone by a symplectomorphism 
\begin{align*}
	\phi: \left(M\setminus M_{c}^{\circ},\omega\right)  \cong  \left(\partial M_{c}\times [1,\infty), d(r\alpha)\right)
\end{align*}
where $M_{c}^{\circ}$ is the interior of $M_{c}$, $(\partial M_{c},\alpha)$ is a contact manifold and $r$ is the coordinate of $[1,\infty)$.  Moreover,
\begin{enumerate}
	\item The Liouville vector field $Z$ characterized by $\omega(Z,\cdot) = \theta$ points strictly outwards along $\partial M_{c}$. 
	
	\item $\phi$ is induced by the flow of $Z$ for time $\log r$.
	
	\item Outside the compact set $M_{c}$, the symplectic form is exact: $\omega = d\theta$ where $\theta = \phi^*(r\alpha)$.  
	
\end{enumerate}

A conical almost complex structure $J$ is taken, which is an $\omega$-compatible almost complex structure on $M$ satisfying $J^*\theta = dr$ for $r$ large enough. 

Recall that we have defined framed Lagrangian immersion in Definition \ref{def: flag}, which is a union of a compact Lagrangian immersion $\bL$ together with a framing $F$. In the convex setup, we make a further assumption on the framing $F$ as follows.

\begin{defn}[Conical framing Lagrangian] \label{def:framing} A conical framing Lagrangian of a convex symplectic manifold $M$ is a Lagrangian $F\subset M$ diffeomorphic to $\mathbb{R}^n$  equipped with a Morse function $f_F$ with exactly one maximal point and no other critical point, and $F$ is of the form $F = F_{c} \cup_{\partial F_{c}}(\partial F_{c}\times [1,\infty))$ where 
	\begin{enumerate}
		\item $F_{c}:=F\cap M_c$ intersects $\partial M_{c}$ transversally, 
		\item $\theta|_{F-F_{c}}$ vanishes near the boundary $\partial F_{c} := \partial M_{c}\cap F_{c}.$  In particular, the complement $F-F_c$ is an exact Lagrangian with a constant primitive function.
	\end{enumerate} 
\end{defn}

\begin{defn}
	A framed convex symplectic manifold is a tuple $(M,\bF)$ where $M$ is a convex symplectic manifold and $\bF$ is a collection of framing Lagrangians that are disjoint with each other.  We denote by $|\bF|$ the cardinal number of the framing collection. 
\end{defn}

\begin{example} \label{exmp:plumbing}
	Let's recall the  plumbing  construction of an exact symplectic manifold \cite[Section 2.3]{MR2786590}, which is automatically a Liouville domain whose completion is a convex symplectic manifold. 
	It is obtained from gluing by a symplectomorphism between two disk bundles $D^*S_{1}$ and $D^*S_{2}$ over two spheres that identifies the fiber and base in opposite directions.
	
	Let $M_1,M_2$ be compact Riemannian manifolds, and we consider the total spaces of their disk cotangent bundles $D^*M_1, D^*M_2$.  Consider open balls $B_i \subset D_i$ such that the disk bundles $D^*B_i \subset D^*M_i$ for $i=1,2$ are symplectomorphic under  $(x_{1},y_{1})  \mapsto (-y_{2},x_{2})$, where $x_i,y_i$ denote the base and fiber coordinates respectively.
	The glued manifold 
	can be completed to a Liouville manifold (after rounding off the corner).  Moreover, $M_{i}$ are are embedded exact Lagrangian submanifolds. 
	In the resulting Liouville manifold, take a cotangent fiber of $M_i$ as a framing Lagrangian; then the union of $M_i$ with the cotangent fibers yields a framed Lagrangian immersion.
\end{example}

We will be particularly interested in the case that $M$ is the plumbing space of $T^*\bS^2$ according to a graph $D$.

\begin{thm}\cite[Thm. 5.2]{HLT24}
	Let $M$ be the plumbing space of $T^*\bS^2$ according to a graph $D:=(I,E)$, and 
	$L^{\fr}\;=\;L\cup_{v\in I} F_v$
	be the framed Lagrangian given by the zero section $L$ together with a cotangent fiber $F_v$ in the component indexed by vertex $v$. Then quiver associated to $L$ (resp. $\bL^\fr$) is the (resp. framed) double quiver of $D$. Moreover, the noncommutative deformation space $\cA_L$ (resp. $\cA_{\bL^\fr}$) of $L$ (resp. $\bL^\fr$) is isomorphic to the (resp. framed) preprojective algebra of $D$.
\end{thm}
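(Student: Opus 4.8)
The plan is to compute directly the Floer-theoretic data attached to $L$ and $\bL^\fr$ in the plumbing, then read off the quiver and its relations. First I would pin down the geometry: the normalization of $L$ is $\bigsqcup_{v\in I}S_v$ with $S_v\cong\bS^2$ the zero section of the $v$-th $T^*\bS^2$, and by construction of the plumbing (Example~\ref{exmp:plumbing}) the only self-intersections of $L$ are transverse double points $p_e\in S_{t(e)}\cap S_{h(e)}$, one per oriented edge $e$; for $\bL^\fr$ there are in addition transverse points $q_v\in S_v\cap F_v$, one per vertex. On each $S_v$ fix a perfect Morse function with a unique maximum $M_v$ (the unit) and a unique minimum $\pt_v$ (the degree-two generator), with no index-one critical point because $H^1(\bS^2)=0$; on each $F_v\cong\R^2$ fix a Morse function with a single maximum. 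Grade each $S_v$ by the canonical grading of the zero section of $T^*\bS^2$; in the local model at a plumbing point the two sheets are the transverse Lagrangian planes $\R^2$ and $i\R^2$ in $\C^2$, so both immersed generators at $p_e$ have degree $1$, and after a grading shift of the contractible $F_v$ the two immersed generators at $q_v$ are likewise of degree $1$.

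From this data the quiver associated to $L$ is immediate: its vertices are the components $S_v$; since $\pi_1(\bS^2)=1$ there are no holonomy loop arrows and no $\pi_1$-relations; and the two degree-one immersed generators at $p_e$ form a pair of opposite arrows $a_e\colon t(e)\to h(e)$, $\bar a_e\colon h(e)\to t(e)$ — exactly the double quiver $Q$ of $D$. Adjoining the $F_v$ as framing vertices and the degree-one generators at $q_v$ as arrows $i_v\colon F_v\to S_v$, $j_v\colon S_v\to F_v$ yields the framed double quiver $Q^\fr$. This proves the first assertion.

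For the algebras one must compute the obstruction $m_0^{\bb}$ of the universal deformation $\bb=\sum_e(a_e X_{a_e}+\bar a_e X_{\bar a_e})$ (and, for $\bL^\fr$, $+\sum_v(i_v X_{i_v}+j_v X_{j_v})$). By degree, $m_0^{\bb}$ lands in $\CF^2(L,L)=\bigoplus_v \Lambda_0 Q\cdot\pt_v$ — the framing components contribute nothing, because $H^2(F_v)=0$ — so the coefficient of $\pt_v$ is by definition the relation of $\cA$ at $v$. The leading contribution comes from the rigid holomorphic triangles with both input corners at one double point and output at a minimum: $m_2(X_{a_e},X_{\bar a_e})$ outputs $\pt_{t(e)}$, $m_2(X_{\bar a_e},X_{a_e})$ outputs $\pt_{h(e)}$, and by the local anti-symplectic involution at the node these two triangles for a fixed edge carry opposite signs; summing over the edges incident to $v$ reproduces exactly the commutator pattern of the preprojective relation $\sum_a\epsilon(a)\,\bar a a$ at $v$, while $m_2(X_{j_v},X_{i_v})$ adds the framing term $i_v j_v$ at $\pt_v$, with no relation at a framing vertex (matching Nakajima's moment-map conventions). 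The existence, regularity, signs and symplectic areas of these triangles must be extracted from the local geometry of $T^*\bS^2$ — most cleanly via its presentation as a conic Lefschetz fibration, with $S_v$ a matching sphere and $F_v$ a fibre-class Lagrangian — and the resulting Novikov weights $T^{A_e},T^{A_v}$ are absorbed by rescaling the arrows, i.e.\ by passing to $\Lambda$ and specializing $T=e^{-1}$ as in the footnote.

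Finally I would show the higher terms $m_k(\bb,\dots,\bb)$, $k\ge3$, do not affect the isomorphism type: since $L$ is an exact immersed Lagrangian, every corner at a double point carries a fixed positive action, so a polygon with at least three corners has strictly larger energy and contributes only at strictly higher Novikov order; a formal, gauge-equivariant change of the deformation parameter $\bb$ then trivializes these corrections order by order, leaving $\cA_L\cong\Pi(D)$ and $\cA_{\bL^\fr}$ isomorphic to the framed preprojective algebra of $D$. The main obstacle is the disc analysis of the previous paragraph — locating all rigid holomorphic polygons feeding into $m_0^{\bb}$ in the plumbing and pinning down their signs and coefficients. Once that local computation is secured (I would reduce it to the Lefschetz-fibration model of $T^*\bS^2$, or invoke the known identifications of the compact Fukaya category of a plumbing of $T^*\bS^2$'s with the derived preprojective algebra), the quiver bookkeeping and the normal-form step are routine.
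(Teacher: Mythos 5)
Your identification of the quiver is fine and matches the paper's setup: components of the normalization give vertices, the two degree-one immersed generators at each plumbing node give the pair of opposite arrows of the double quiver, and the transverse points $S_v\cap F_v$ give the framing arrows; the absence of $\pi_1$ and of relations at the framing vertices (since $H^1(\bS^2)=H^2(F_v)=0$) is also correctly accounted for. The gap is in your treatment of the higher-order contributions to $m_0^{\bb}$. You argue that polygons with three or more corners are suppressed because ``every corner at a double point carries a fixed positive action.'' This is false in the present setting: the immersed Lagrangian is exact with \emph{constant} primitive on each component, so every holomorphic polygon bounded by $L$ with corners at the nodes has zero symplectic area and is in fact constant. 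This is exactly how the paper describes the computation (``these pearl trajectories correspond to constant $2n$-gons mapped to the immersed points''), so the full obstruction at a node takes the form $\bigl(1+\sum_k c_k(xy)^k\bigr)xy$ at one minimum and $-\bigl(1+\sum_k c_k'(yx)^k\bigr)yx$ at the other, all at Novikov order $T^0$. There is no energy filtration doing the work for you; the only filtration available is the formal one by word length in the arrows.

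Granting that filtration, the second and more serious problem is that the order-by-order elimination of the corrections is \emph{not} automatic. A change of coordinates must act on the single variable $y$ at a node, and it simultaneously affects the relations at both adjacent vertices: writing $\tilde y=\bigl(1+\sum_k c_k(yx)^k\bigr)y$ turns the first expression into $x\tilde y$ and the second into $-\bigl(1+\sum_k c_k'(yx)^k\bigr)\bigl(1+\sum_k c_k(yx)^k\bigr)^{-1}\tilde y x$, which is the preprojective relation only if $c_k=c_k'$ for all $k$. Equality of these coefficients is precisely what the local anti-symplectic involution symmetry of \cite[Lem.~3.4]{HKL23} provides, and the paper names this as \emph{the} key ingredient of the proof. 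You invoke the involution only to fix the signs of the leading triangles, and then assert that ``a formal, gauge-equivariant change of the deformation parameter trivializes these corrections order by order'' — that assertion is exactly the step that needs the symmetric-coefficient input and cannot be obtained from general nonsense. (Your fallback of importing the known computation of the compact Fukaya category of the plumbing would also need an extra Koszul-duality argument to say anything about $\cA_L$, so it does not close the gap as stated.)
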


A key ingredient is the local involution symmetry introduced in \cite[Lem. 3.4]{HKL23}, which enables a change of coordinate identifying the obstruction equations $m_0^{\bb}=0$ with the preprojective relations. Thus, $\cA_L$ is isomorphic to the preprojective algebra.

\begin{cor}\label{cor: MC}
	Take $M$ and $\bL^\fr$ as above. Let $\cE$ be a trivial vector bundle with the trivial connection of rank $\bv$ (resp. rank $\bw$) over the compact components (resp. framing components) of $\bL^\fr$. Then restricting to the Fukaya subcategory generated by $\bL^\fr$, the Maurer-Cartan space of the framed Lagrangian brane $\bMC_\zeta(L^{\fr},\cE)$ is isomorphic to the Nakajima quiver variety $\bM_\zeta(\bv,\bw).$
\end{cor}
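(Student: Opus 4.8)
The plan is to chain together three facts. First, the preceding theorem \cite[Thm.~5.2]{HLT24} says that the quiver attached to $\bL^\fr$ is the framed double quiver of $D$ and that the noncommutative deformation algebra $\cA_{\bL^\fr}$ is the framed preprojective algebra of $D$. Second, the Proposition of Section~\ref{section:nc mirror} identifies the Maurer--Cartan space $\bMC_\zeta$ with the moduli of $\zeta$-semistable representations of that algebra. Third, Nakajima's work \cite{Nak98} realizes $\bM_\zeta(\bv,\bw)$ as exactly the GIT quotient of the representation variety of the framed preprojective algebra by $G_\bv$. Composing the three isomorphisms gives the corollary.

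Concretely: first I would pass to the non-multiplicative model. Since $L$ is a union of $2$-spheres and each framing $F_v\cong\R^2$, every component of $\bL^\fr$ is simply connected, so the factors $\Hom(\pi_1(\hat{\bL}_i),\GL(v_i))$ in Definition~\ref{def: MC} are trivial and $\bMC_\zeta(\bL^\fr,\cE)=\widehat{\bMC}_\zeta(\bv,\bw)$; this moduli space depends only on the $A_\infty$-algebra of $\bL^\fr$, which is what ``restricting to the Fukaya subcategory generated by $\bL^\fr$'' means here. Next I would unwind the defining data of $\widehat{\bMC}_\zeta(\bv,\bw)$ via \cite[Thm.~5.2]{HLT24}: the ambient affine variety is the framed double quiver representation space $M(\bv,\bw)$ of Section~\ref{sec: review}; the gauge group is $\cG=\prod_{i\in I}\GL(v_i)=G_\bv$, the framing components contributing fixed data and no gauge freedom; and, after the local anti-symplectic involution change of coordinates of \cite[Lem.~3.4]{HKL23} that normalizes $m_0^{\bb}$, the obstruction equation $m_0^{\bb}=0$ becomes the preprojective relation $\epsilon BB+ab=0$, i.e.\ the moment-map equation $\mu(B,a,b)=0$. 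Hence $\widehat{\mathrm{MC}}(\bv,\bw)=\mu^{-1}(0)$ with its $G_\bv$-action, and the complex parameter is $\zeta_\C=0$ (forced, since with a trivial connection and no bulk term $m_0^{\bb}$ has vanishing constant part). Finally I would match stability conditions: the GIT character of $\cG$ that is $0$ on the framing and $1$ on each compact vertex is the character associated to $\zeta_\R=(1,\dots,1)\in\R^I_{>0}$, and King's criterion identifies GIT-(semi)stability for it with Nakajima's $\zeta_\R$-(semi)stability of Definition~\ref{def: stab}; by Remark~\ref{rem:stability}, since $\zeta_{\R,i}>0$ for all $i$, condition $(2)$ is vacuous, condition $(1)$ reads ``no nonzero $B$-invariant $I$-graded $S\subseteq\Ker b$'', and semistable points are stable. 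Putting these together, $\bMC_\zeta(\bL^\fr,\cE)=\mu^{-1}(0)^{ss}/G_\bv=\bM_\zeta(\bv,\bw)$ with $\zeta=(\zeta_\R,0)$, $\zeta_\R>0$.

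Given \cite[Thm.~5.2]{HLT24}, the genuinely substantive input --- identifying the Floer obstruction $m_0^{\bb}$, with the sign pattern $\epsilon BB$ coming from the anti-symplectic involution at the nodes, with the preprojective relation --- has already been supplied, so I do not expect a real obstacle; the present corollary is essentially a repackaging. The remaining care is bookkeeping: (i) checking that the GIT character used in Definitions~\ref{def: MC} and \ref{def: flag} corresponds under King's criterion to Nakajima's combinatorial $\zeta_\R$, and that all semistables are stable here; and (ii) tracking framing multiplicities --- verifying that for general $\bw$ one places $\bw_i$ cotangent fibers (each with the trivial line bundle, so Definition~\ref{def: flag} applies verbatim) at vertex $i$, that the degree-one Floer generators on their intersections with $L$ assemble into exactly $a\in L(W,V)$ and $b\in L(V,W)$ with $\dim W_i=\bw_i$, and that the degree conventions select these generators (and not the degree-$0$ or degree-$2$ ones) as the matrix data $(B,a,b)$.
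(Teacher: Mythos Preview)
Your proposal is correct and follows the same route as the paper. In fact the paper gives no explicit proof of this corollary: it is stated as an immediate consequence of the preceding theorem (\cite[Thm.~5.2]{HLT24}, identifying $\cA_{\bL^\fr}$ with the framed preprojective algebra) together with the Proposition of Section~\ref{section:nc mirror} (identifying $\bMC_\zeta$ with the moduli of $\zeta$-semistable representations of $\cA_{\bL}$), and your three-step chain is exactly this. Your additional care about simple-connectedness, the stability matching via King's criterion, and the framing bookkeeping is more detailed than what the paper spells out, but none of it departs from the intended argument.
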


Due to the existence of the coordinate changes, the objects of $\cM_\zeta(\bv,\bw)$ only have polynomial dependence in $T$. Hence, one can specialize to obtain the complex–valued theory and thereby reduce the constructions the setting over $\C$. 

In the remaining sections, we will apply the same strategy: we restrict the localized mirror functor to the subcategory that has no convergence issues and work over the complex number.

Finally, we note that in \cite{HLT24}, the authors computed the image of a framed Lagrangian brane $(\bL^\fr,\cE,b_0)$ under the localized mirror functor $\cF^{(\bL,\bb)}(\bL^\fr,\cE,b_0)$, which is isomorphic to the monadic complex of the torsion free sheaf over $\widetilde{\C^2/\Gamma}$ in the affine $ADE$ cases. 

\begin{thm}\cite[Thm. 5.13]{HLT24}
	$\cF^{(\bL,\bb)}$ transforms the framed Lagrangian branes $(L^{\textrm{fr}},\cE,b_0)$ into the following complexes:
	\begin{equation} \label{eq:univb}
		(\cF^{(\bL,\bb)}(L^{\textrm{fr}},\cE,b_0),m_1^{\bb,b_0}):  L^0((\bL,\bb),(L,\cE)) \to E((\bL,\bb),(L,\cE)) \oplus L^1((\bL,\bb), (F,\cE)) \to L^2((\bL,\bb),(L,\cE)),
	\end{equation}
    which coincides with the monad constructed algebraically by Nakajima \cite{Nak07}. 	This complex is written using the notations introduced in Remark \ref{rem: tau}, and its terms are spanned by Floer generators $M_v \in L^0((\bL,\bb),(L,\cE)), P_v \in L^2((\bL,\bb),(L,\cE))$, and the immersed sectors $J_v \in L^1((\bL,\bb), (F,\cE))$, $X_a \in E((\bL,\bb),(L,\cE))$ for all vertices $v$ and arrows $a$ of $Q$. Here, $M_v$, $P_v$ are the maximal points and minimum points of the Morse function $f_v$ on $\bS^2_v$ respectively, while $X_a \in \CF^1(\bS^2_{t(a)},\bS^2_{h(a)})$ and $J_v \in \CF^1(\bL_v, F_v)$ are degree $1$ immersed sectors. 
	Furthermore, the first differential takes the form $$m_{1}^{\bb,b_0}(\eta M_v)= \sum_{t(\bar{a})=v} (B_{\bar{a}} \eta) X_{\bar{a}} + \sum_{h(a)=v} ( \eta x_a )X_a + j_v\eta J_v,$$ and the second one is $$m_{1}^{\bb,b_0}(\eta' X_a)=  \eta' x_{\bar{a}} P_{h(a)}+ B_{\bar{a}} \eta' P_{t(a)}; \,\,  m_{1}^{\bb}(\eta'' J_v) = i_v \eta'' P_v.$$
	
	Furthermore, the cohomology of this complex is isomorphic to a torsion free sheaf over $\widetilde{\C^2/\Gamma},$ which admits an extension to a framed torsion-free sheaf over $\widetilde{\bP^2/\Gamma}.$
\end{thm}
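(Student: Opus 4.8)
The plan is to compute the complex $\bigl(\CF((\bL,\bb),(L^{\fr},\cE,b_0)),\,m_1^{\bb,b_0}\bigr)$ explicitly in the Morse model on the plumbing space $M=\bigcup_v T^*\bS^2_v$ and then to match it term by term with the monad of \cite{Nak07}. First I would pin down the underlying graded $\cA_\bL$-module. On each sphere $\bS^2_v$ fix the perfect Morse function $f_v$ with unique maximum $M_v$ and unique minimum $P_v$, which in the Lagrangian grading convention sit in degrees $0$ and $2$ and leave no degree-one Morse generator; each plumbing node contributes one immersed degree-one sector $X_a\in\CF^1(\bS^2_{t(a)},\bS^2_{h(a)})$ per arrow $a$ of the double quiver $Q$, and each framing fiber $F_v$ meets $\bS^2_v$ in one point, contributing a degree-one sector $J_v\in\CF^1(\bL_v,F_v)$ together with its reverse $\overline{J}_v$, the latter entering $b_0$. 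Tensoring with $\cA_\bL$ --- which by \cite[Thm.~5.2]{HLT24} is the framed preprojective algebra modelling the universal family $(\bL,\bb)$ --- and with the flat bundles $\cE$ of ranks $\bv$ and $\bw$, the complex splits by degree into exactly the three terms $L^0((\bL,\bb),(L,\cE))$, $E((\bL,\bb),(L,\cE))\oplus L^1((\bL,\bb),(F,\cE))$, and $L^2((\bL,\bb),(L,\cE))$ of \eqref{eq:univb}, spanned by the $M_v$, by the $(X_a,J_v)$, and by the $P_v$ respectively.

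Next I would compute $m_1^{\bb,b_0}$, which on a class $\xi$ equals $\sum_{k,\ell\ge0}m_{k+\ell+1}(\bb,\ldots,\bb,\xi,b_0,\ldots,b_0)$, generator by generator. On the degree-zero generator $M_v$, which represents the homotopy unit $\one_v$ in the truncated Morse model, the unital $A_\infty$-relations kill every term with $k+\ell\ge2$ and reduce $m_2(\one_v,-)$ and $m_2(-,\one_v)$ to $\pm\Id$, so that only $m_2(\bb,\eta M_v)$ and $m_2(\eta M_v,b_0)$ survive; reading off the quiver coefficients of $\bb$ and of $b_0=(B,i,j)$ and applying the tensor rule \eqref{eq:mk} gives precisely $m_1^{\bb,b_0}(\eta M_v)=\sum_{t(\bar a)=v}(B_{\bar a}\eta)X_{\bar a}+\sum_{h(a)=v}(\eta x_a)X_a+j_v\eta J_v$. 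The degree-one generators $X_a$ and $J_v$ are genuine immersed sectors rather than units, so here I would count rigid holomorphic polygons whose output is a minimum: in the local Weinstein model near each node there is exactly one holomorphic triangle pairing $X_a$ with the opposite sector $X_{\bar a}$ and outputting $P_{h(a)}$ or $P_{t(a)}$ according to which side the corner sits on, and one triangle pairing $J_v$ with $\overline{J}_v$ and outputting $P_v$; together with the tensor rule this produces $m_1^{\bb,b_0}(\eta'X_a)=\eta'x_{\bar a}P_{h(a)}+B_{\bar a}\eta'P_{t(a)}$ and $m_1^{\bb,b_0}(\eta''J_v)=i_v\eta''P_v$. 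Signs are pinned down by the local anti-symplectic involution of \cite[Lem.~3.4]{HKL23}, and the positivity of the Novikov valuations of the arrows of $\bb$ and $b_0$ together with an index count on the moduli of polygons with output corner $P_v$ rules out any contribution from polygons with more than two inputs.

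With the differentials in hand, the identification with the monad of \cite{Nak07} is essentially formal. Under the tautological-bundle dictionary of Remark \ref{rem: tau} the three terms become $L(\mathcal V)$, $E(\mathcal V)\oplus L(\mathcal W,\mathcal V)$, and $L(\mathcal V)$ (up to dualization) over $\bMC_\zeta(\bv,\bw)\cong\cM_\zeta(\bv,\bw)$ (Corollary \ref{cor: MC}), and the maps computed above are exactly the differentials of Nakajima's tautological monad; moreover, since $\cA_\bL$ is the framed preprojective algebra (\cite[Thm.~5.2]{HLT24}) and $b_0$ solves $m_0^{b_0}=0$, which is then the moment-map equation $\epsilon B B+ij=0$, the composite $m_1^{\bb,b_0}\circ m_1^{\bb,b_0}$ vanishes and the complex is a genuine monad. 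Finally, in the affine ADE case, with $\bw$ the framing at the affine vertex and $\bv$ the appropriate dimension vector, $\bMC_\zeta(\bv,\bw)$ is, by Kronheimer--Nakajima \cite{KN90,Nak07}, a moduli space of torsion-free sheaves on $\widetilde{\C^2/\Gamma}$ described by a monad of exactly the shape \eqref{eq:univb}; identifying our complex with that monad shows that its outer cohomologies vanish --- by the stability condition $\zeta$ and the moment-map equation --- and that its middle cohomology is the torsion-free sheaf, while rewriting the same monad with the homogeneous $\widetilde{\bP^2/\Gamma}$-data of \cite{Nak07} extends it to a framed torsion-free sheaf.

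The main obstacle is the holomorphic-polygon count in the second paragraph: establishing that on the immersed generators $X_a$ and $J_v$ the $A_\infty$-sum defining $m_1^{\bb,b_0}$ truncates to a single $m_2$ with exactly the stated coefficients. This is the one step that is not formal --- it requires the local involution symmetry and a genuine energy/index argument rather than unitality --- whereas everything downstream (the monad identification, the fact that $m_1^{\bb,b_0}\circ m_1^{\bb,b_0}=0$ is the moment-map relation, and the torsion-free-sheaf conclusion) follows from it together with \cite[Thm.~5.2]{HLT24}, Corollary \ref{cor: MC}, and the results of \cite{KN90,Nak07}.
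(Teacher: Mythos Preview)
This theorem is cited from \cite{HLT24} and is not proved in the present paper, so there is no direct comparison to make; however, the paper's own use of this result (in the proof of Theorem~\ref{thm:H}) reveals exactly where your proposal goes wrong.

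Your treatment of the degree-zero differential is correct and matches the paper: the unital property of $M_v$ forces $m_1^{\bb,b_0}(\eta M_v)$ to reduce to two $m_2$'s, giving the stated formula. The problem is your second paragraph, where you assert that on the immersed generators $X_a$ and $J_v$ ``an index count on the moduli of polygons with output corner $P_v$ rules out any contribution from polygons with more than two inputs.'' This is false. At each plumbing node the Lagrangian is exact with constant primitive, so there are \emph{constant} $2n$-gons (with zero symplectic area and the right index) mapping to the immersed point and contributing to $m_k$ for all $k$. The paper makes this explicit in the proof of Theorem~\ref{thm:H}: before any simplification the second differential reads
\[
\tau(\eta,a,b)= \Bigl(1+ \textstyle\sum_j c_j(B_{\bar{2}}B_{2})^j\Bigr)B_{\bar{2}} \eta - \eta \Bigl(1+ \textstyle\sum_j c_j(B_{\bar{1}}B_{1})^j\Bigr)B_{\bar{1}} + \cdots,
\]
with infinitely many higher-order terms. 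The clean formula $m_1^{\bb,b_0}(\eta' X_a)=\eta' x_{\bar a}P_{h(a)}+B_{\bar a}\eta' P_{t(a)}$ in the theorem statement holds only \emph{after a coordinate change}, and that coordinate change is precisely what the local anti-symplectic involution of \cite[Lem.~3.4]{HKL23} enables: the involution forces the coefficients of $(B_{\bar 2}B_2)^j$ and $(B_{\bar 1}B_1)^j$ to agree, so the higher terms can be absorbed by redefining the variables (this is \cite[Thm.~5.2]{HLT24}). You invoke the involution only for signs, but its real role is to license this coordinate change. You correctly flagged this step as the main obstacle; the resolution is not an energy/index vanishing argument but the involution-driven change of coordinates.
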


In the next subsection, we will use a more general monadic complex $\left(\CF^*((\bL^\fr,\cE_1),(\bL^\fr,\cE_2)), m_1^{b_1,b_2}\right)$ to construct the Hecke correspondence.

\subsection{Hecke Correspondence}\label{sec:Hecke}

Hecke correspondence plays an important role in the geometric representation theory, which serves as the Kernel of the Fourier-Mukai type transforms in defining the creation and annihilation operators \cite{Nak98,Nak01}. 
In this subsection, we realize their Lagrangian Floer-theoretic analogue. 

In the following, we always assume $D$ is a graph without self-loops i.e. there are no arrows whose head and tail are the same vertex. This technical assumption ensures that the corresponding subvariety has the correct dimension. We denote by $Q$ the double quiver of $D$, and by $Q^\fr$ its framed double quiver.

Let $\bL^\fr$ be the framed Lagrangian immersion. Take the dimension vectors $\mathbf{w}$,  $\mathbf{v}^1$ and $\mathbf{v}^2$. Let $(\bL^\fr,\cE_m)$ be the stable framed Lagrangian brane of rank $(\mathbf{v}^m,\mathbf{w})$ with the deformation parameter $b_m:=\sum B_{ma}X_a + \sum i_{mv} I_v + \sum j_{mv} J_v $ for $m=1,2$. Here $B_m, i_m, j_m$ should be viewed as the data of the Nakajima quiver variety, appearing naturally as coefficients in the deformation parameter.

Recall that the Maurer-Cartan space of a framed Lagrangian brane $(\bL^\fr,\cE)$ is isomorphic to a Nakajima quiver variety $\bM_\zeta(\mathbf{v},\mathbf{w})$ under a suitable stability condition. In what follows, we will always adopt the stability condition described in Remark \ref{rem:stability}, and denote the Maurer-Cartan space by $\bMC(\bv,\bw)$ or $\bM(\bv,\bw)$. Besides, we will often write the deformation parameters as $b_m:=[B_m,i_m,j_m]$ for short, since $b_m \in \bMC(\bv^m,\bw) $ is an equivalent class of quiver representation, see Section \ref{sec:op_Fuk}.

We consider the complex of vector bundles $(\CF^*((\bL^\fr,\cE_1),(\bL^\fr,\cE_2)),m_1^{b_1,b_2})$ over $\bMC((\bL^\fr,\cE_1)) \times \bMC((\bL^\fr,\cE_2))$. Note that here we will use a different convention of framing Lagrangians and localized mirror functors than in \cite{HLT24} in order to better align with the works of Nakajima. With additional work, one could also follow the conventions in \cite{HLT24}, and the results would still hold. In this paper, we will take the framing Lagrangian to always have rank 1 and intersect all connected compact components (spheres) in the domain of the Lagrangian immersion. This framing Lagrangian can be constructed as follow: 
 \begin{Construction}
 	Given a graph $D$, we first add an additional vertex such that it connects to all vertices of $D$. Then we perform the plumbing construction of $T^*\bS^2$ according to the new graph, which gives a Liouville manifold $M$ together with the Lagrangian immersion $L$. Let $T \subset M$ be a cotangent fiber at the component corresponding to the newly added vertex. The framed Lagrangian immersion is obtained by a Lagrangian surgery of $L$ with $T$ at the node $L \cap T$. The surgery is carried out locally around the node which does not affect the rest of the Lagrangian. A local model for the surgery is described in \cite[Section 2e]{Sei00}, and a review can be found, for example, in \cite[Section 2.1]{PW19}.
 \end{Construction}
Up to the end of Section 4, we will stick to this notion unless otherwise specified.


Recall that we choose perfect Morse functions on each connect component of $\hat{\bL}$. Thus, there exists a fiberwise dual between $\CF^*((\bL^\fr,\cE_1,b_1),(\bL^\fr,\cE_2,b_2))$ and $\CF^{2-*}((\bL^\fr,\cE_2,b_2),(\bL^\fr,\cE_1,b_1))$ over $\bMC(\bL^{\fr},\cE_1) \times \bMC(\bL^{\fr},\cE_2)$, where $(b_1,b_2) \in \bMC(\bL^{\fr},\cE_1) \times \bMC(\bL^{\fr},\cE_2)$. This duality allows us to analyze the support of Floer cohomology between branes of nearby ranks, leading to the following identification with the Hecke correspondence.

\begin{thm}\label{thm:H}
Assume $D$ has no edge loops. Consider two branes $(\bL^\fr,\cE_j)$ for $j=1,2$ where $\cE_j$ are trivial bundles over $\bL^\fr$ of rank $(\bv^j,\bw)$ for $\bv^2 = \bv^1 + \mathbf{e}^k$ (meaning that $\cE_2$ has one higher rank than $\cE_1$ at the $k$-th component). Then the support of the Floer cohomology $\HF^2((\bL^\fr,\cE_2),(\bL^\fr,\cE_1))$ is a holomorphic Lagrangian subvariety $\cS_k \subset \bMC(\bL^{\fr},\cE_2) \times \bMC(\bL^{\fr},\cE_1)$. Moreover, there exists a change of coordinates on $\bMC(\bL^{\fr},\cE_2) \times \bMC(\bL^{\fr},\cE_1)$ and a change of basis on $\CF((\bL^{\fr},\cE_2),(\bL^{\fr},\cE_1))$ under which $\cS_k$ is identified with the Hecke correspondence $\mathfrak{P}_k(\bv_2,\bw)$ in $\cM(\bv^2,\bw) \times \cM(\bv^1,\bw)$.

Furthermore, $\HF^2((\bL^\fr,\cE_2),(\bL^\fr,\cE_1))$ is a line bundle over the Hecke correspondence $\mathfrak{P}_k(\bv_2,\bw)$.
\end{thm}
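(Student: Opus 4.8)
The plan is to reduce the whole theorem to a computation of a morphism space of framed quiver representations, and then to read the sheaf structure off a degeneracy‑locus analysis.

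\emph{Reduction to $\Hom_\cA$.} The object $\HF^2((\bL^\fr,\cE_2),(\bL^\fr,\cE_1))$ is the degree‑two cohomology sheaf of the complex of tautological bundles $(\CF^*((\bL^\fr,\cE_2),(\bL^\fr,\cE_1)),m_1^{b_2,b_1})$ over $\bMC(\bL^\fr,\cE_2)\times\bMC(\bL^\fr,\cE_1)$, which is supported in degrees $0,1,2$ because the Morse functions on the spheres and on the framing are perfect. Hence $\HF^2=\mathrm{coker}(d)$ for the top differential $d\colon C^1\to C^2$; being a cokernel it commutes with restriction to a point, so its fibre over $([b_2],[b_1])$ is the pointwise Floer cohomology $\HF^2((\bL^\fr,\cE_2,b_2),(\bL^\fr,\cE_1,b_1))$. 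By the fibrewise Poincaré duality recalled before the theorem this pointwise group equals $\HF^0((\bL^\fr,\cE_1,b_1),(\bL^\fr,\cE_2,b_2))^\vee$, and by \eqref{eq:HF0} applied to the framed preprojective algebra $\cA=\cA_{\bL^\fr}$ it is $\Hom_\cA(b_1,b_2)^\vee$. (As elsewhere in this section we may work over $\C$, which is harmless since all objects involved depend polynomially on $T$.) Concretely $\Hom_\cA(b_1,b_2)$ consists of tuples $(\xi_v\colon V^1_v\to V^2_v;\ \lambda\in\C)$, with $\lambda$ the component at the maximum of the rank‑one framing $F$ (inducing $\lambda\cdot\mathrm{id}_W$ on the framing space), subject to $B_2\xi=\xi B_1$, $\xi i_1=\lambda i_2$ and $\lambda j_1=j_2\xi$.

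\emph{Identifying the support with the Hecke locus.} The stability $\zeta$ of Theorem \ref{thm: MC} is positive on every sphere, so Remark \ref{rem:stability} governs the stable representations $b_1,b_2$. If $(\xi,0)$ is such a morphism then $j_2\xi=0$, so $\Im\xi$ is a $B_2$‑invariant $I$‑graded subspace of $\Ker j_2$ and hence $\Im\xi=0$; therefore $(\xi,\lambda)\mapsto\lambda$ is injective on $\Hom_\cA(b_1,b_2)$, giving $\dim_\C\Hom_\cA(b_1,b_2)\le 1$, and nonvanishing forces $\lambda\ne 0$. In that case $\xi':=\lambda^{-1}\xi$ satisfies $\xi' i_1=i_2$ and $j_1=j_2\xi'$, and $\Ker\xi'\subseteq\Ker j_1$ is $B_1$‑invariant, so $\Ker\xi'=0$ and $\xi'\colon V^1\hookrightarrow V^2$ with $\dim(V^2/\Im\xi')=\bv^2-\bv^1=\mathbf{e}^k$; conversely any point of $\mathfrak{P}_k$ produces such a morphism. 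Thus, after the coordinate change of \cite{HLT24} identifying $\bMC(\bv^m,\bw)\cong\cM(\bv^m,\bw)$ together with the fibrewise rescaling $\xi\mapsto\lambda^{-1}\xi$ (an isomorphism over the locus $\lambda\ne 0$, which is exactly the support), the support $\cS_k$ of $\HF^2$ is identified with $\mathfrak{P}_k(\bv^2,\bw)\subset\cM(\bv^2,\bw)\times\cM(\bv^1,\bw)$, on which the fibre of $\HF^2$ has dimension exactly $1$. Since $D$ has no edge loops, Nakajima \cite{Nak98} shows $\mathfrak{P}_k$ is holomorphic Lagrangian, and as $\bMC\cong\cM$ is a symplectomorphism (the coordinate change being gauge‑equivariant and symplectic), $\cS_k\subset\bMC(\bL^\fr,\cE_2)\times\bMC(\bL^\fr,\cE_1)$ is holomorphic Lagrangian.

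\emph{The line bundle.} Because $\HF^2=\mathrm{coker}(d)$ has fibre dimension $\le 1$ everywhere, Nakayama's lemma makes it locally generated by one element, hence $\iota_*\mathcal{Q}$ for a quotient $\mathcal{Q}$ of the structure sheaf of the Fitting scheme of $\mathrm{coker}(d)$, i.e.\ the locus where $\mathrm{rk}\,d$ drops by one. Since the fibre dimension never exceeds $1$, there is no deeper degeneracy locus, so $d$ has constant corank one there and $\mathcal{Q}$ is locally free of rank one. It remains to check this Fitting scheme is reduced and coincides with Nakajima's $\mathfrak{P}_k$: this is where the "no edge loops" hypothesis enters, guaranteeing that the corank‑one locus of $d$ has the expected codimension, hence is Cohen--Macaulay and, being generically reduced (it is generically the smooth part of $\mathfrak{P}_k$), reduced; by the previous step it is $\mathfrak{P}_k$. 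Therefore $\HF^2=\iota_*\mathcal{L}$ for a line bundle $\mathcal{L}$ on $\mathfrak{P}_k$.

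\emph{Main obstacle.} I expect the genuinely hard part to be the scheme‑theoretic matching in the last step — promoting the set‑theoretic identification to an isomorphism of schemes between the Fitting‑ideal support of $\HF^2$ and Nakajima's reduced $\mathfrak{P}_k$ — which requires controlling the determinantal geometry of $d$ and pinning down the precise form of $d=m_1^{b_2,b_1}$ by comparison with Nakajima's monadic complex as in \cite[Thm.~5.13]{HLT24}; this comparison is also what underlies the "change of basis on $\CF$" appearing in the statement. A secondary difficulty is keeping the stability bookkeeping valid uniformly over the Maurer--Cartan space, including boundary strata where $b_1$ or $b_2$ degenerate, so that both implications (nonzero morphism $\Rightarrow\lambda\ne 0$, and $\lambda\ne 0\Rightarrow\xi$ injective) persist there.
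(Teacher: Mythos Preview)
Your proposal is correct and follows essentially the same route as the paper: pass by fibrewise duality to $\HF^0$, identify it with the morphism space of framed quiver representations (the paper does this by writing out the monadic complex and its first differential $\sigma$ explicitly rather than citing the general $\HF^0\cong\Hom_\cA$ statement, but the content is identical), then use stability to force $\lambda\ne 0$ and $\xi$ injective, and invoke Nakajima for the holomorphic Lagrangian property. Your Fitting-ideal/degeneracy-locus treatment of the line-bundle claim is more scheme-theoretically careful than the paper's, which simply infers the result from the fibre being one-dimensional at every point of the support; the scheme-theoretic concern you flag as the main obstacle is not addressed there beyond the fibrewise statement.
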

\begin{proof}
	In this proof, we work on $\CF^*((\bL^{\fr},\cE_1),(\bL^{\fr},\cE_2))$, which is the fiberwise dual of $\CF^*((\bL^{\fr},\cE_2),(\bL^{\fr},\cE_1))$. 
	
   	Let $b_1:=[B_1,i_1,j_1]$ (resp. $b_2:=[B_2,i_2,j_2]$) be a Maurer-Cartan element of $(\bL^\fr,\cE_1)$ (resp. $(\bL^\fr,\cE_2)$). The Morse model produces the following Lagrangian Floer complex $\CF^*((\bL^\fr,\cE_1),(\bL^\fr,\cE_2))$ over $\bMC(\bL^{\fr},\cE_1) \times \bMC(\bL^{\fr},\cE_2)$, which can be computed as in \cite[Thm. 5.13]{HLT24}: \begin{equation}\label{eq:monad}
  		 L^0((\bL,\cE_1),(\bL,\cE_2)) \oplus L^0(F,F) \xrightarrow{\sigma} E((\bL,\cE_1),(\bL,\cE_2))
  		 \oplus L^1(F,(\bL,\cE_2))
  		 \oplus L^1((\bL,\cE_1), F)  \xrightarrow{\tau} L^2((\bL,\cE_1),(\bL,\cE_2)),
   	\end{equation} where the notations $L(V^1,V^2)$ and $E(V^1,V^2)$ are defined in Remark \ref{rem: tau}, $\sigma(\xi,d):=m_1^{b_1,b_2}(\xi M, d M_{F})=  (B_2 \xi - \xi B_1)  \oplus  (j_2 \xi -d j_1)  \oplus (\xi i_1 -i_2 d)$ and $\tau$ is more complicated before coordinate change. More precisely, we denote $m_1^{b_1,b_2}(\eta X, a I,b J)$ by $\tau(\eta,a,b)$, and  $$\tau(\eta,a,b)= ((1+ \sum_j c_j(B_{\bar{2}}B_{2})^j)B_{\bar{2}} \eta- \eta (1+ \sum_j c_j(B_{\bar{1}}B_{1})^j)B_{\bar{1}})+a(1+\sum_{k}  d_k(j_1i_1)^k )j_1-(1+\sum_{n}  e_n(i_2j_2)^n )i_2 b ,$$ where $c_j$, $d_k$ and $e_n$ are constant, $B_{\bar{1}}, B_{\bar{2}}$ denote the corresponding opposite arrows of $B_1, B_2$. Note that the first differential arises from the unital property of the fundamental classes, whereas the second differential takes the form $\iota^{b_1,b_2}_\xi \partial_{x_a} m_0^{\bb}$, defined as in Proposition \ref{prop: Hom-Ext}. 
   
   Recall that using the Morse models, $m_0^{\bb}$ counts the pearl trajectories whose outputs are the minimum points of the spheres. Because the immersed Lagrangian is exact and the primitive is constant, these pearl trajectories correspond to constant $2n$-gons mapped to the immersed points and evaluated on the stable manifold of the minimum point, contributing the terms $c_j(B_{\bar{1}}B_1)^jB_{\bar{1}} $, and so forth, in the equation.
   
   Moreover, due to the local involution symmetry  (see for example \cite[Lem. 3.4]{HKL23}), the coefficients of $(B_{\bar{2}}B_2)^j$ and $(B_{\bar{1}}B_1)^j$ are the same, which enables us to perform a coordinate change as in \cite[Thm. 5.2]{HLT24}. After changing the coordinate, $\tau$ is simplified to $\tau(\eta,a,b)= B_2 \eta- \eta B_1+aj_1-i_2b$, and $\CF((\bL^{\fr},\cE_1),(\bL^{\fr},\cE_2))$ becomes a complex of vector bundles over $\cM(\bv^1,\bw) \times \cM(\bv^2,\bw)$.

    
    Notice that by the fiberwise dual, the support of the cokernel of $\sigma^\vee$ coincides with the fiberwise kernel of $\sigma.$ Moreover, by Proposition \ref{prop: Hom-Ext}, $\sigma(\xi,d)\mid_{(b_1,b_2)}=0$ if and only if $(\xi,d)$ defines a homomorphism from $b_1$ to $b_2$ as quiver representations.
    Thus $\HF^2((\bL^\fr,\cE_2),(\bL^\fr,\cE_1)) \cong \mathrm{Coker}(\sigma^\vee)$, as a sheaf over $\bM C(\bL^{\fr},\cE_2) \times \bM C(\bL^{\fr},\cE_1)$, has fiber over $(b_2,b_1)$ being the dual space of homomorphisms from $b_1$ to $b_2$. For simplicity, we will work on the dual space.
    
    Observe that,
    with the fixed stability condition, $d \neq 0$ for any nonzero elements $(\xi,d)$ in Kernel of $\sigma$. Otherwise, we have $j_2 \xi =0$, which implies that $ \Im(\xi)$ forms a subrepresentation of $[B_2,i_2,j_2]$ and  that  $\Im(\xi) \subset \Ker(j_2)$. The stability condition implies $\Im(\xi)=0$, which forces $(\xi,d)=(0,0)$.
    
    Furthermore, we also have $\Ker(\xi)=0$ for any nonzero elements $(\xi,d)$ in Kernel of $\sigma$. In other words, any nonzero homomorphism $(\xi,d)$ is injective. Indeed, one can check that $\Ker(\xi)$ is  $B_1$-invariant subspace of $\Ker(j_1)$.
    Hence,  $\Ker(\xi)=0$ by the stability condition.
    
    Thus after a change of coordinate, the support $\cS_k$ is identified with the subsets of pairs $(\tilde{b}_2,\tilde{b}_1) \in \cM(\bv^2,\bw) \times \cM(\bv^1,\bw)$ such that there exist an injective homomorphism $(\xi,d)$ with $d\not=0$ between them. Since $\sigma$ is homogeneous in $(\xi,d)$, this condition is equivalent to the existence of an injective framed homomorphism $(\xi/d,1)$ between them. This coincides with the Hecke correspondence $\mathfrak{P}_k(\bv_2,\bw)$ defined by Nakajima, see Definition 5.6 in \cite{Nak98}, which is a holomorphic Lagrangian subvariety of $\bM (\mathbf{v}^1,\mathbf{w})\times \bM (\mathbf{v}^2,\mathbf{w})$ by Theorem 5.7 in \cite{Nak98}.
    
    Finally, let $(\xi_1,d)$ and $(\xi_2,d)$ be elements in $ \Ker\sigma \mid_{(b_1,b_2)}$ for any $(b_2,b_1)$ in the Hecke correspondence. Then $\sigma(\xi_1-\xi_2,0)$ equals to 0, and hence $\xi_1=\xi_2$ by previous discussion. Therefore, each fiber of  $\HF^2((\bL^\fr,\cE_2),(\bL^\fr,\cE_1))$ is one–dimensional, showing that it forms a line bundle over the Hecke correspondence.
\end{proof}

\begin{cor}
	Assume the same hypotheses as in Theorem~\ref{thm:H}. Let $\mathbf{v}^2= \mathbf{v}^1 +  r \mathbf{e}^k$ with $r \in \Z_{\geq 0}$. Then there exists a change of coordinates on $\bMC(\bL^{\fr},\cE_2) \times \bMC(\bL^{\fr},\cE_1)$ and a change of basis on $\CF((\bL^{\fr},\cE_2),(\bL^{\fr},\cE_1))$ that identifies  the cohomological support of $\HF^2((\bL^\fr,\cE_2),(\bL^\fr,\cE_1))$ with the generalized Hecke correspondence in $\bMC(\bv^2,\bw) \times \bMC(\bv^1,\bw)$. 
	
	Moreover, $\HF^2((\bL^\fr,\cE_2),(\bL^\fr,\cE_1))$ is a line bundle over the generalized Hecke correspondence.
\end{cor}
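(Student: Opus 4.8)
The plan is to run the proof of Theorem~\ref{thm:H} with essentially no change, since nothing there is special to $r=1$. First I would observe that the monadic complex \eqref{eq:monad} is built from the Morse model for $\CF^*((\bL^\fr,\cE_1),(\bL^\fr,\cE_2))$ of two arbitrary framed branes, and that the coordinate change of \cite[Thm.~5.2]{HLT24} which simplifies $\tau$ to $\tau(\eta,a,b)=B_2\eta-\eta B_1+a j_1-i_2 b$ is extracted from the local involution symmetry of the universal obstruction $m_0^{\bb}$; neither construction refers to the auxiliary ranks $\bv^1,\bv^2$, so after this change of coordinates and the accompanying change of basis the complex is again a complex of vector bundles over $\cM(\bv^1,\bw)\times\cM(\bv^2,\bw)$. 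Working on the fiberwise dual $\CF^*((\bL^\fr,\cE_1),(\bL^\fr,\cE_2))$ exactly as before, $\HF^2((\bL^\fr,\cE_2),(\bL^\fr,\cE_1))\cong\mathrm{Coker}(\sigma^\vee)$ has fiber over $(b_2,b_1)$ dual to $\Ker(\sigma)\mid_{(b_1,b_2)}$, which by Proposition~\ref{prop: Hom-Ext} (equation \eqref{eq:HF0}) equals $\Hom_{\cA}(b_1,b_2)$: the space of framed homomorphisms $(\xi,d)$ with $B_2\xi=\xi B_1$, $j_2\xi=d\,j_1$, $\xi i_1=i_2 d$, where $d$ is a scalar, being the degree-$0$ self-morphism of the framing $F$.

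The only step that uses the dimension hypothesis is the identification of the support. As in Theorem~\ref{thm:H}, I would first show that any nonzero $(\xi,d)\in\Ker\sigma$ has $d\neq 0$: if $d=0$ then $j_2\xi=0$, so $\Im\xi$ is a $B_2$-invariant subspace of $\Ker j_2$, hence $\Im\xi=0$ by the stability condition of Remark~\ref{rem:stability}, forcing $(\xi,d)=0$. Then $\xi':=\xi/d$ is a genuine framed homomorphism $V^1\to V^2$ whose kernel is $B_1$-invariant and contained in $\Ker j_1$, hence zero by stability, so $\xi'$ is injective; since $\dim V^2-\dim V^1=r\,\mathbf{e}^k$, its cokernel has dimension vector $r\,\mathbf{e}^k$. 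Conversely, any pair admitting an injective framed homomorphism with cokernel of dimension $r\,\mathbf{e}^k$ yields an element $(\xi',1)\in\Ker\sigma$ and so lies in the support. This locus is precisely the generalized Hecke correspondence of Nakajima \cite[§5]{Nak98} --- equivalently the locus of $B_2$-invariant subspaces $S\subset V^2$ with $\Im i_2\subset S$ and $\dim(V^2/S)=r\,\mathbf{e}^k$ --- so after the coordinate change the cohomological support of $\HF^2$ is identified with it. (The same locus is also the set-theoretic $r$-fold convolution $\mathfrak{P}_k\circ\cdots\circ\mathfrak{P}_k$, which gives an alternative inductive route, but the direct argument is cleaner.)

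For the last assertion I would argue as in the final paragraph of Theorem~\ref{thm:H}: the assignment $(\xi,d)\mapsto d$ defines a linear map $\Ker(\sigma)\mid_{(b_1,b_2)}\to\C$ which is injective --- if $(\xi_1,d)$ and $(\xi_2,d)$ lie in $\Ker\sigma$ then $(\xi_1-\xi_2,0)\in\Ker\sigma$, hence $\xi_1=\xi_2$ by the dichotomy just established --- and surjective over the generalized Hecke correspondence. Thus every fiber of $\HF^2$ over its support is one-dimensional, so $\HF^2$ is a line bundle over the generalized Hecke correspondence. I expect no genuine difficulty here; the main thing to verify with care is simply that the monad presentation and coordinate change of \cite[Thms.~5.2 and 5.13]{HLT24} depend only on the universal brane $(\bL,\bb)$, so that they transfer unchanged to $\bv^2=\bv^1+r\,\mathbf{e}^k$ (this is routine, as those constructions are obtained by tensoring the universal $A_\infty$-algebra with the relevant modules), together with matching the resulting locus to Nakajima's definition. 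One should also keep in mind that, unlike in Theorem~\ref{thm:H}, the generalized Hecke correspondence need not be a holomorphic Lagrangian, so the statement asserts only an identification of the cohomological support and not a Lagrangianity property.
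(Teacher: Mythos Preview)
Your proposal is correct and follows exactly the paper's approach: the paper's own proof is a two-line remark that the computation is analogous to Theorem~\ref{thm:H} and that the resulting support matches Nakajima's generalized Hecke correspondence, and you have simply spelled out those analogous steps in full. One small point: the generalized Hecke correspondence for $r>1$ is introduced in \cite[\S5.3]{Nak01} rather than in \cite{Nak98}, so you should adjust the citation when matching the locus to Nakajima's definition.
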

\begin{proof}
	The computation is analogous to that in Theorem \ref{thm:H}. One can see that after a change of coordinates, the support of $\HF^2((\bL^\fr,\cE_2),(\bL^\fr,\cE_1))$ can be identified with the generalized Hecke correspondence introduced in Section 5.3 of \cite{Nak01}.
\end{proof}

\begin{rem}
	In general, let $L$ be a compact Lagrangian immersion of dimension $n$, 
	and let $\cE_1$ and $\cE_2$ be trivial bundles of ranks $\bv^1 \leq \bv^2$. 
	Suppose there exists a stability condition such that every nonzero morphism 
	between $\cA_L$-representations of dimensions $\bv^1$ and $\bv^2$ is injective. 
	Then the top-degree Floer cohomology $\HF^n((L,\cE_2),(L,\cE_1))$ 
	forms a sheaf over the Hecke-type subvariety $\mathfrak{P}$ in 
	$\bMC(\bv^1) \times \bMC(\bv^2)$. In other words,  $\mathfrak{P} \subset \bMC(\bv^1) \times \bMC(\bv^2)$ is defined to be the subvariety consisting of pairs of $\cA_L$-representations $(V^1, V^2)$, for which there exists an injective homomorphism $\xi:V^1\hookrightarrow V^2$. This follows from the fiberwise dual and Proposition \ref{prop: Hom-Ext}, which can be viewed as a higher-dimensional analogue of Theorem~\ref{thm:H}.
\end{rem}

From the proof, the maximum point of the framing Lagrangian plays an important role in the Hecke correspondence, for which we reformulate as the following corollary. 
\begin{cor} \label{cor:supp}
  Let $\mathbf{v}^1 \leq \mathbf{v}^2$. For any nonzero class in $\HF^2((\bL^\fr,\cE_2),(\bL^\fr,\cE_1))$, every cocycle representative has nonzero coefficient at the maximum point of the framing Lagrangian. 
\end{cor}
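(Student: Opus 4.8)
The plan is to derive the corollary from the fiberwise Poincar\'e duality already used in the proof of Theorem~\ref{thm:H}, which turns it into the nonvanishing of the framing component of a framed quiver homomorphism. First I would work fiberwise over a point $(b_2,b_1)$ in $\bMC(\bL^\fr,\cE_2)\times\bMC(\bL^\fr,\cE_1)$; if $(b_2,b_1)$ is not in the support of the sheaf $\HF^2((\bL^\fr,\cE_2),(\bL^\fr,\cE_1))$ there is nothing to prove, so I may assume it lies in the Hecke--type locus $\mathfrak{P}$, over which Theorem~\ref{thm:H} (and the discussion preceding this corollary) shows the fiber is one dimensional. Next I would invoke the fiberwise dual between $\CF^{2-*}((\bL^\fr,\cE_2,b_2),(\bL^\fr,\cE_1,b_1))$ and $\CF^{*}((\bL^\fr,\cE_1,b_1),(\bL^\fr,\cE_2,b_2))$: it identifies $\HF^2((\bL^\fr,\cE_2,b_2),(\bL^\fr,\cE_1,b_1))$ with the linear dual of $\HF^0((\bL^\fr,\cE_1,b_1),(\bL^\fr,\cE_2,b_2))$, and under it the ``coefficient at the maximum point of the framing Lagrangian'' of a cocycle representative becomes the pairing of that representative against the generator $M_F$ spanning the summand $L^0(F,F)$ of $\CF^0((\bL^\fr,\cE_1),(\bL^\fr,\cE_2))$ in the monad \eqref{eq:monad}; I would make this matching precise.

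Then I would apply Proposition~\ref{prop: Hom-Ext}, in the form adapted to $\bL^\fr$ and the framing stability of Remark~\ref{rem:stability}, to rewrite $\HF^0((\bL^\fr,\cE_1,b_1),(\bL^\fr,\cE_2,b_2))\cong\Hom_{\cA_{\bL^\fr}}(b_1,b_2)$; an element here is a framed homomorphism $(\xi,d)$ with $\xi\in L^0((\bL,\cE_1),(\bL,\cE_2))$ the component along the spheres, $d\in L^0(F,F)$ the component along the framing (equivalently the $M_F$--coefficient), and $\sigma(\xi,d)=0$ reading $B_2\xi=\xi B_1$, $j_2\xi=d\,j_1$, $\xi i_1=i_2 d$. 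The heart of the argument is then the stability step from the proof of Theorem~\ref{thm:H}: if $d=0$ then $j_2\xi=0$, so $\Im\xi$ is a $B_2$--invariant $I$--graded subspace of $\Ker j_2$, hence $\Im\xi=0$ by Remark~\ref{rem:stability}, so $\xi=0$ and $(\xi,d)=0$. Thus every nonzero element of $\Hom_{\cA_{\bL^\fr}}(b_1,b_2)$ has $d\neq 0$; since this space is the one--dimensional fiber of $\HF^2((\bL^\fr,\cE_2),(\bL^\fr,\cE_1))^{\vee}$ over $\mathfrak{P}$, dualizing shows that a nonzero class in $\HF^2((\bL^\fr,\cE_2,b_2),(\bL^\fr,\cE_1,b_1))$ cannot annihilate the $M_F$--direction, i.e.\ each of its cocycle representatives has nonzero coefficient at the maximum point of the framing Lagrangian. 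Since this is uniform in $(b_2,b_1)$, it gives the statement for the sheaf.

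The step I expect to be the main obstacle is the Morse--theoretic bookkeeping of the first paragraph: singling out, in the chosen model, which generator of the complex computing $\HF^2((\bL^\fr,\cE_2),(\bL^\fr,\cE_1))$ plays the role of ``the maximum point of the framing Lagrangian'', and verifying that under both the fiberwise Poincar\'e duality and Proposition~\ref{prop: Hom-Ext} it is dual to the framing component $d$ rather than getting mixed with the sphere generators $M_v$ and $P_v$. Once that identification is fixed, the proof uses no new Floer--theoretic ingredient---the nonvanishing is exactly the stability argument already present in Theorem~\ref{thm:H}.
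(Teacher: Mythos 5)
Your proposal follows essentially the same route as the paper: Corollary \ref{cor:supp} is given there with no independent argument, only the remark that it reformulates the observation, inside the proof of Theorem \ref{thm:H}, that any nonzero $(\xi,d)\in\Ker\sigma$ has $d\neq 0$ by stability. Your fiberwise reduction, the duality $\HF^2((\bL^\fr,\cE_2),(\bL^\fr,\cE_1))\cong\HF^0((\bL^\fr,\cE_1),(\bL^\fr,\cE_2))^\vee$, the identification of $\HF^0$ with framed homomorphisms $(\xi,d)$ via Proposition \ref{prop: Hom-Ext}, and the stability step forcing $d\neq 0$ are exactly the ingredients the paper relies on.

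However, the step you flag as the ``main obstacle'' is where the literal statement actually breaks, and your final dualization does not close it. Suppose $\bv^1\neq 0$ and $\Ker\sigma$ is spanned by $(\xi_0,d_0)$ with $d_0\neq 0$ and $\xi_0\neq 0$ (which holds since $\xi_0$ is injective on a nonzero space). Since $\Im(\sigma^\vee)=\mathrm{Ann}(\Ker\sigma)$, a degree-two cochain $\phi=\phi_L+c\,M_F^\vee$ represents a nonzero class if and only if $\phi_L(\xi_0)+c\,d_0\neq 0$; taking $c=0$ and $\phi_L(\xi_0)\neq 0$ produces a nonzero class with a representative whose framing coefficient vanishes, and indeed by adding suitable elements of $\mathrm{Ann}(\Ker\sigma)$ one can shift $c$ to any value without changing the class. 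So ``every cocycle representative has nonzero coefficient at $M_F$'' is not a formal consequence of $d_0\neq 0$. What is true --- and what the paper actually invokes in the convolution discussion right after the corollary --- is the $\HF^0$ form: every nonzero class in $\HF^0((\bL^\fr,\cE_1),(\bL^\fr,\cE_2))$ has a unique cocycle representative $(\xi,d)\in\Ker\sigma$ (there are no coboundaries in degree zero), and that representative has $d\neq 0$. Your stability argument proves precisely this; the corollary should be stated in that form, or for the canonical pairing of the dual $\HF^2$ class against $M_F$, rather than for arbitrary representatives.
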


Furthermore, we observe that the support of fiberwise cohomology exhibits a structure, induced by the $A_\infty$-product, analogous to the convolution product on Borel–Moore homology. More precisely, for $\mathbf{v}^1 \leq \mathbf{v}^2 \leq \mathbf{v}^3$,  the support of the $A_\infty$-product is $p_{13}(p_{12}^{-1}(Z(\mathbf{v}^1,\mathbf{v}^2))\cap p_{23}^{-1}(Z(\mathbf{v}^2,\mathbf{v}^3)))$, where $p_{kj}$ denotes the projection from $\cM(\bv^1,\bw) \times \cM(\bv^2,\bw) \times \cM(\bv^3,\bw)$ to $\cM(\bv^k,\bw) \times \cM(\bv^j,\bw)$ and $Z(\mathbf{v}^k,\mathbf{v}^j)$ denotes the support of the fiberwise cohomology $\HF^0((\bL^\fr,\cE_k),(\bL^\fr,\cE_j))$. This is because $$m_2^{b_1,b_2,b_3}: \HF^0((\bL^\fr,\cE_1),(\bL^\fr,\cE_2,b_2)) \otimes \HF^0((\bL^\fr,\cE_2,b_2'),(\bL^\fr,\cE_3)) \to \HF^0((\bL^\fr,\cE_1),(\bL^\fr,\cE_3))$$ is zero if $b_2 \neq b_2'$. Hence, the support of the output is contained in  $p_{13}(p_{12}^{-1}(Z(\mathbf{v}^1,\mathbf{v}^2))\cap p_{23}^{-1}(Z(\mathbf{v}^2,\mathbf{v}^3)))$. It remains to show the support is the entire set.

Using the Morse model, the degree zero generators are the maximum points of the irreducible components in the normalization of $\bL^\fr$. Let's denote the maximum points of spheres by $M_{ij} \in \HF^0((\bL^\fr,\cE_i),(\bL^\fr,\cE_j))$ and the framing Lagrangian by $M_{F_{ij}}$ for $i,j=1,2,3$. Given any nonzero elements $\sum \xi_1 M_{12}+ d_1M_{F_{12}}$ and $\sum \xi_2 M_{23}+ d_2M_{F_{23}}$ in $\HF^0((\bL^\fr,\cE_1),(\bL^\fr,\cE_2))$ and $\HF^0((\bL^\fr,\cE_2),(\bL^\fr,\cE_3))$ respectively. Consider the output of $m_2^{b_1,b_2,b_3}(\sum \xi_1 M_{12}+ d_1M_{F_{12}}, \sum \xi_2 M_{23}+ d_2M_{F_{23}})$ at $M_{F_{13}}$. Notice that $M_{F_{ij}}$ is the maximum point of the framing Lagrangian. Thus, the only contribution comes from $m_2^{b_1,b_2,b_3}( d_1M_{F_{12}},  d_2M_{F_{23}})=d_2d_1 M_{F_{13}}$, which is nonzero according to Corollary \ref{cor:supp}.

Hence, the support is $p_{13}(p_{12}^{-1}(Z(\mathbf{v}^1,\mathbf{v}^2))\cap p_{23}^{-1}(Z(\mathbf{v}^2,\mathbf{v}^3)))$, which is analogue to the convolution product on $\bigoplus \limits_{\mathbf{v}^1,\mathbf{v}^2} H_{*}(\mathrm{Supp}\,\HF^0((\bL^\fr,\cE_1),(\bL^\fr,\cE_2))),$ where $H_{*}$ is the Borel-Moore homology.

\subsection{Construction of Holomorphic Lagrangians}
In this subsection, we will construct the holomorphic Lagrangian subvariety introduced by Nakajima in the ADE cases, see for example \cite{Nak94} and \cite{Nak98}, using the Lagrangian Floer theory of the frame Lagrangian branes. More precisely, assume $D$ is a finite graph with no cycles. Nakajima proved that $\pi^{-1}(0)$ is a holomorphic Lagrangian subvariety of $\bM (\mathbf{v},\mathbf{w})$, where $\pi: \bM (\mathbf{v},\mathbf{w}) \to \bM_0 (\mathbf{v},\mathbf{w})$ is the natural projective morphism between the Nakajima quiver varieties, arising from varying the stability conditions.

Let's first reconstruct a filtration on $\bMC (\mathbf{v},\mathbf{w})$ using Lagrangian Floer theory. In some special cases, it gives an irreducible component of the holomorphic Lagrangians in  $\bMC (\mathbf{v},\mathbf{w})$.

\begin{lem}\label{lem:fil}
	The Floer cohomology $\HF^2(\bS_k,(\bL^\fr,\cE))$ gives a filtration of $\bMC (\mathbf{v},\mathbf{w})$ for each irreducible sphere $\bS_k$ in the normalization of $\bL^\fr$.
\end{lem}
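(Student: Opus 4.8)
The plan is to exhibit $\HF^2(\bS_k,(\bL^\fr,\cE))$ as the cokernel of a map of tautological bundles over $\bMC(\bv,\bw)$ and then take the associated degeneracy-locus filtration. First I would set up the sheaf: since the spheres (and the framing $F\cong\R^2$) are simply connected, $\bMC_\zeta(\bv,\bw)=\widehat{\bMC}_\zeta(\bv,\bw)$ and flat connections play no role, so the Floer complex $\bigl(\CF^*(\bS_k,(\bL^\fr,\cE,b)),m_1^{0,b}\bigr)$ is a complex of tautological bundles assembled from the $\mathcal{V}_i$ of Remark~\ref{rem: tau} and the trivial framing bundles, with strictly $\cG$-equivariant differential, and its cohomology descends to $\bMC(\bv,\bw)$. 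Exactly as in the proof of Theorem~\ref{thm:H}, the perfect Morse functions on the components of $\hat{\bL}$ produce a fiberwise perfect pairing between $\CF^*(\bS_k,(\bL^\fr,\cE,b))$ and $\CF^{2-*}((\bL^\fr,\cE,b),\bS_k)$, so
\[
\HF^2(\bS_k,(\bL^\fr,\cE,b))\;\cong\;\HF^0\bigl((\bL^\fr,\cE,b),\bS_k\bigr)^\vee\;=\;\mathrm{Coker}\bigl(\sigma^\vee\bigr),
\]
where $\sigma\colon\CF^0((\bL^\fr,\cE,b),\bS_k)\to\CF^1((\bL^\fr,\cE,b),\bS_k)$ is the first differential; in particular $\HF^2(\bS_k,(\bL^\fr,\cE))$ is a coherent sheaf with a two-term locally free resolution.

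Next I would compute $\sigma$ by the Morse model, in the style of Proposition~\ref{prop: Hom-Ext}. After the homotopy-unit truncation the only degree-zero generator of $\CF^*((\bL^\fr,\cE,b),\bS_k)$ is the maximum $M_k$ of the Morse function on $\bS_k$, so fiberwise $\CF^0=\Hom(\cE|_{M_k},\underline{\C})\cong(\C^{\bv_k})^\vee$; by unitality of $M_k$ the differential is $\sigma(\xi M_k)=m_2(b,\xi M_k)$, and among the components of $b=\sum_a B_aX_a+\sum_v i_vI_v+\sum_v j_vJ_v$ only the immersed sectors $X_a$ with $h(a)=k$ and the framing sectors $I_k$ (those landing in the $\bS_k$-component) compose with $M_k$, yielding
\[
\sigma(\xi M_k)=\bigl(\xi B_a\bigr)_{h(a)=k}\ \oplus\ \xi\, i_k\ \in\ E\bigl((\bL,\cE),\bS_k\bigr)\oplus L^1(F,\bS_k).
\]
Hence $\HF^0((\bL^\fr,\cE,b),\bS_k)=\ker\sigma=(V_k/N_k)^\vee$, where $N_k:=\sum_{h(a)=k}\Im B_a+\Im i_k\subseteq V_k$, which one recognizes as $\Hom_{\cA_L}(b,S_k)$ for $S_k$ the one-dimensional simple module at the vertex $k$. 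Dualizing, the fiber of $\HF^2(\bS_k,(\bL^\fr,\cE))$ over $[b]$ is $V_k/N_k$, of dimension $\bv_k-\dim N_k$, and $\dim N_k$ is the rank at $b$ of the tautological bundle map $\bigoplus_{h(a)=k}\mathcal{V}_{t(a)}\,\oplus\,\underline{\C}^{\bw_k}\to\mathcal{V}_k$ given by $(B_a)_{h(a)=k}$ and $i_k$.

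Finally, this rank is lower semicontinuous in $b$, so $[b]\mapsto\dim\HF^2(\bS_k,(\bL^\fr,\cE,b))$ is upper semicontinuous and $\cG$-invariant, and the loci
\[
Z_j:=\bigl\{\,[b]\in\bMC(\bv,\bw)\ :\ \dim\HF^2(\bS_k,(\bL^\fr,\cE,b))\ge j\,\bigr\}
\]
are Zariski closed — they are the vanishing loci of the Fitting ideals of the cokernel sheaf $\HF^2(\bS_k,(\bL^\fr,\cE))$ — and they yield the filtration
\[
\bMC(\bv,\bw)=Z_0\supseteq Z_1\supseteq\cdots\supseteq Z_{\bv_k}\supseteq Z_{\bv_k+1}=\emptyset .
\]

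The main obstacle is the Morse-model computation of $\sigma$ in the second paragraph: Proposition~\ref{prop: Hom-Ext} is stated for two bounding-cochain deformations of the \emph{same} immersion, whereas here one brane is a single component $\bS_k$ of $\bL^\fr$ carrying the trivial brane structure, so one must check that the pearl/polygon counts defining $m_2(b,-)$ into $\CF^1(\bL^\fr,\bS_k)$ are the expected ``unital'' ones, that the homotopy-unit truncation leaves $M_k$ as the unique degree-zero generator with no incoming differential (so that $\HF^0=\ker\sigma$ with no further quotient), and that the fiberwise duality used in Theorem~\ref{thm:H} carries over to this slightly asymmetric pair — the last point being routine, as it relies on the same mechanism. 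Granting this, the semicontinuity argument is formal.
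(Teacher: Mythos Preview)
Your argument is correct and lands on the same filtration as the paper, but the route differs in one respect worth noting. The paper works directly with the complex
\[
\CF^*(\bS_k,(\bL^\fr,\cE)):\quad 0\longrightarrow V_k\xrightarrow{\ \sigma\ }\bigoplus_{a:\,t(a)=k}V_{h(a)}\oplus W_k\xrightarrow{\ \tau\ }V_k\longrightarrow 0,
\]
identifies $\HF^2$ as $\mathrm{Coker}\,\tau$, and then stratifies by the dimension of this cokernel, defining the open pieces $\bMC_{k,\leq r}(\bv,\bw)$. It also records that $\sigma$ is injective by stability. You instead pass first to the dual complex $\CF^*((\bL^\fr,\cE,b),\bS_k)$, compute $\HF^0=\ker\sigma$ there, and then dualize back; your closed loci $Z_j$ are the complements of the paper's open loci. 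So the two arguments are formally equivalent, with the paper's being one step shorter since it avoids the detour through duality. What your version buys is the clean identification $\HF^0((\bL^\fr,\cE,b),\bS_k)\cong\Hom_{\cA_L}(b,S_k)\cong (V_k/N_k)^\vee$ and the Fitting-ideal description of the strata, neither of which the paper makes explicit; these are nice bonuses. Your flagged ``obstacle'' is not a real issue: the unitality of $M_k$ is used freely throughout the paper (e.g.\ in the proofs of Proposition~\ref{prop: Hom-Ext} and Theorem~\ref{thm:H}) and applies verbatim when one of the two branes is a single component with trivial bounding cochain.
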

\begin{proof}
	Similar to Equation \ref{eq:monad}, one can compute
	\begin{equation}
		\CF^*(\bS_k,(\bL^\fr,\cE)):	0 \xrightarrow{} V_k \xrightarrow{\sigma} \bigoplus_{a:t(a)=k} V_{h(a)} \oplus W_k \xrightarrow{\tau} V_k \xrightarrow{} 0,
	\end{equation} where $\sigma$ and $\tau$ are similar to Equation \ref{eq:monad}, $V_j$ (resp. $W_k$) denotes the tautological bundle associated with the $j-$th vertex (the $k$-th framing vertex) over $\bMC(\bv,\bw)$. 
	
	By stability condition, $\sigma$ is injective; otherwise there exists a proper sub-representation contained in kernel of $j$. In general, the dimension of $\HF^2(\bS_k,(\bL^\fr,\cE))= \mathrm{Coker}\,\tau$ varies fiberwise, which induces a filtration of $\bMC(\bv,\bw)$ according to the dimensions. More precisely, we can define the following subsets of $\bMC(\bv,\bw)$:
	\begin{align}
		\bMC_{k,r}(\bv,\bw):=& \{b_0=[B,i,j] \in \bMC(\bv,\bw) \mid \HF^2(\bS_k, (\bL^\fr,E,b_0))=r\}, \\ 
		\bMC_{k,\leq r}(\bv,\bw):=& \bigcup_{s\leq r} \bMC_{k,s}(\bv,\bw).\notag
	\end{align} In particular, $\bMC_{k,\leq r}(\bv,\bw)$ is an open subset of $\bMC(\bv,\bw)$. 
\end{proof}

In particular, $\bMC_{k,\geq v_k}(\bv,\bw)$ is a closed subvariety. We will use this observation to construct an irreducible component of the holomorphic Lagrangian in the framed $A_n$ case. Let $D$ be the $A_n$ Dynkin diagram. We then add a vertex intersecting the first and last vertices and carry out the construction described in Section \ref{sec:Hecke}, from which we obtain a framed Lagrangian $\bL^\fr$.

\begin{prop}
	Let $\bL^\fr$ be the framed Lagrangian constructed above, equipped with a trivial vector bundle $E$ with rank $(\vec{\delta},1),$ where $\vec{\delta}=(1,\ldots,1)$. Then $$\HF^2(\bS_k,(\bL^\fr,\cE)) \cong \iota_* \mathcal{L},$$  where $\cL$ is a line bundle over the $k$-th exceptional divisor of $\pi: \bMC(\vec{\delta},1) \to \bMC_0(\vec{\delta},1)$ and $\iota$ denotes the inclusion map.
\end{prop}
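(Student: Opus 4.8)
The plan is to compute the complex $\CF^*(\bS_k,(\bL^\fr,\cE))$ explicitly for the $A_n$-plumbing after adding the extra vertex, and to identify the fibrewise cokernel of $\tau$ with a line bundle over the $k$-th exceptional divisor. First I would recall from Corollary~\ref{cor: MC} and the preceding theorem that, for the rank vector $(\vec\delta,1)$ with $\vec\delta=(1,\dots,1)$, the Maurer--Cartan space $\bMC(\vec\delta,1)$ is the Nakajima quiver variety $\cM_\zeta(\vec\delta,1)$ attached to the enlarged (cyclic) quiver; for this particular graph and dimension vector this is a smooth surface, and the resolution map $\pi:\bMC(\vec\delta,1)\to\bMC_0(\vec\delta,1)$ contracts a chain of $(-2)$-curves (the exceptional divisors), one for each vertex $k$ of the $A_n$ part. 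So the statement to prove is that the degree-two Floer cohomology sheaf is the pushforward of a line bundle from the $k$-th curve in this chain.

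Next I would examine the monadic complex $0\to V_k\xrightarrow{\sigma}\bigoplus_{a:t(a)=k}V_{h(a)}\oplus W_k\xrightarrow{\tau}V_k\to 0$ from Lemma~\ref{lem:fil}. Since each $V_j$ is a line bundle (the ranks are all $1$), $V_k\cong\bigoplus_{a:t(a)=k}V_{h(a)}\oplus W_k\cong V_k$ have ranks $1$, $3$, $1$ respectively (two neighbours in the $A_n$ chain — or the relevant neighbours for an endpoint — plus the added vertex, plus the framing). By the stability argument already given in Lemma~\ref{lem:fil}, $\sigma$ is everywhere injective, so $\HF^2(\bS_k,(\bL^\fr,\cE))=\operatorname{Coker}\tau$ has generic rank $0$; thus $\bMC_{k,\geq 1}(\vec\delta,1)$ is exactly the locus where $\tau$ fails to be surjective, a divisor cut out by the vanishing of $\det(\tau)$ (equivalently, of a single $3\times 3$ minor after trivialising). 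I would then check that this divisor coincides set-theoretically with the $k$-th exceptional curve: a representative $(B,i,j)$ lies in it precisely when the corresponding point of $\cM_\zeta(\vec\delta,1)$ maps into the singular point of $\cM_0(\vec\delta,1)$ in the $k$-th "slot", which by Nakajima's description of $\pi$ (and the ADE / $i=0$ analysis in the quoted Theorem~\ref{thm: lag} and its corollary) is exactly the $k$-th component of $\pi^{-1}(0)$. On that locus $\operatorname{Coker}\tau$ is generically $1$-dimensional; that it is everywhere $1$-dimensional (so the cokernel sheaf is actually a line bundle on the reduced curve, not merely torsion supported there) follows from the same kind of argument as at the end of the proof of Theorem~\ref{thm:H}: two elements of $\operatorname{Coker}\tau$ over the same point differ by something in the image of $\sigma$, which by injectivity of $\sigma$ and a dimension count pins the fibre dimension to exactly $1$. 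This gives $\HF^2(\bS_k,(\bL^\fr,\cE))\cong\iota_*\cL$ with $\cL=\operatorname{Coker}\tau|_{E_k}$ a line bundle over the $k$-th exceptional divisor $E_k$.

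The main obstacle I expect is the identification of the divisor $\{\det\tau=0\}$ with the $k$-th exceptional curve of $\pi$ (rather than with some other divisor, or with a non-reduced scheme structure), and showing the cokernel is locally free of rank one there rather than merely torsion. Concretely this requires choosing good local coordinates on $\bMC(\vec\delta,1)$ near $E_k$ — e.g.\ via the hyperkähler/quiver description of the minimal resolution of the relevant cyclic quotient singularity — writing $\tau$ in those coordinates after the involution-symmetric change of variables from \cite[Lem.~3.4]{HKL23}, \cite[Thm.~5.2]{HLT24}, and verifying that the vanishing locus of the relevant minor is reduced and equals $E_k$ with $\operatorname{Coker}\tau$ free of rank $1$ on it. The stability condition in Remark~\ref{rem:stability} is what makes $\sigma$ injective and controls the fibre of the cokernel, so I would lean on it throughout; the edge-loop-free (in fact cycle-free, i.e.\ $A_n$) hypothesis is what guarantees the expected dimensions and that $\pi^{-1}(0)$ decomposes into the $E_k$'s without extra components, matching the remark after Theorem~\ref{thm: lag}.
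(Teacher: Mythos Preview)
Your overall strategy matches the paper's: compute the complex of Lemma~\ref{lem:fil}, identify the support of $\operatorname{Coker}\tau$, and show it is the $k$-th exceptional curve with one-dimensional fibres. The execution, however, rests on a misreading of the construction that makes the subsequent steps not go through.

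In the paragraph just before the proposition, the extra vertex is attached only to the \emph{first and last} vertices of the $A_n$ chain, not to every vertex. Consequently the framing vector is $\bw=(1,0,\dots,0,1)$, so $W_k=0$ for every interior $k$, and even at the endpoints the middle term has rank~$2$, not~$3$. For interior $k$ the complex reads
\[
V_k \xrightarrow{\ \sigma\ } V_{k-1}\oplus V_{k+1} \xrightarrow{\ \tau\ } V_k,
\qquad
\tau(\eta_{(k-1)k},\eta_{(k+1)k})=B_{k(k-1)}\eta_{(k-1)k}+B_{k(k+1)}\eta_{(k+1)k},
\]
and at the endpoints one of the neighbour summands is replaced by the framing term. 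In particular there is no ``$\det\tau$'' or ``$3\times 3$ minor'': $\tau$ is a $1\times 2$ matrix, and its cokernel is nonzero exactly where both entries vanish, i.e.\ on the closed locus $\{B_{k(k-1)}=B_{k(k+1)}=0\}$. On that locus $\tau$ is the zero map, so $\operatorname{Coker}\tau=V_k$ is automatically a line bundle there; your $\sigma$-based argument for one-dimensionality is not needed (and as written it confuses the cokernel of $\tau$ with a first cohomology).

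What remains, and what the paper actually does, is to show that this locus is the $k$-th exceptional $\CP^1$. This is not a divisor-type ``$\det=0$'' argument: two scalar equations on a surface look like a point, not a curve, until you use the moment map and stability. The paper argues directly: stability forces the arrows $B_{i(i+1)}$ for $i\le k-1$, $B_{(t+1)t}$ for $t\ge k$, and $j_1,j_n$ to be nonzero on this locus, the $(\C^*)^n$-action normalises all of them to $1$ except $(B_{(k-1)k},B_{(k+1)k})\in\C^2\setminus\{0\}$, and the residual $\C^*$ at vertex~$k$ cuts this down to $\CP^1$. Your appeal to Theorem~\ref{thm: lag} does not substitute for this, since that theorem concerns $\HF^2((\bL^\fr,\cE),F)$ and the locus $i=0$, a different complex and a different subvariety.
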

\begin{proof}
	By construction, the quiver $Q^\fr$ associated to $\bL^\fr$ is the following: $$\begin{tikzcd}
		\bullet \arrow[r, shift left=1ex, "B_{21}"] \arrow[dddrr,shift left=1ex,"j_1"] & \bullet \arrow[l, shift left=1ex, "B_{12}"] \arrow[r, shift left=1ex, "B_{32}"] & \bullet \arrow[l, shift left=1ex, "B_{32}"] \cdots \bullet \arrow[r, shift left=1ex, "B_{(n-1)(n-2)}"] & \bullet \arrow[l, shift left=1ex, "B_{(n-2)(n-1)}"]  \arrow[r, shift left=1ex, "B_{n(n-1)}"] & \bullet \arrow[l, shift left=1ex, "B_{(n-1)n}"] \arrow[dddll,shift left=1ex,"j_n"] \\ \\ \\
		&&\boxed{} \arrow[lluuu,shift left=1ex, "i_1" ]  \arrow[rruuu,shift left=1ex, "i_n" ]
	\end{tikzcd}.$$
	
	By Lemma \ref{lem:fil}, we have $$\tau(\eta_{(k-1)k},\eta_{(k+1)k},0)= B_{k(k-1)} \eta_{(k-1)k}+ B_{k(k+1)}\eta_{(k+1)k}$$ for all $(\eta_{(k-1)k},\eta_{(k+1)k},0) \in E(\bS_k, (\bL^\fr,\cE) ) \oplus L(\bS_k, F)$ and $k\neq 1, n$. Moreover, $\tau(0,\eta_{21},b)= B_{12} \eta_{21}+ i_1b$ and $\tau(\eta_{(n-1)n},0,b)= B_{n(n-1)}\eta_{(n-1)n}+ i_nb.$

	Due to the dimension, $\HF^2(\bS_k,(\bL^\fr,\cE))= \mathrm{Coker}(B,i)$ is a torsion sheaf supported at the closed subvariety $\bMC_{k,1}(\vec{\delta},1)$, defined by $B_{k(k-1)}= B_{k(k+1)}=0.$ In particular, the fiber has dimension 1.
	
	On the other hand, the stability condition implies that the only nonzero morphisms are $B_{i(i+1)}$ for $i \leq k-1$, $B_{(t+1)t}$ for $t \geq k$ and $j_1, j_n$. We are interested in the equivalent classes of these morphisms. Using the group actions, all nonzero linear maps except $B_{(k-1)k}$ and $B_{(k+1)k}$ can be normalized to 1 and  $(B_{(k-1)k},B_{(k+1)k}) \in \C^2 \setminus \{0\}.$ Using the $\C^*$-action at the $k$-th vertex, we see that $\bMC_{k,1}(\vec{\delta},1) \cong \CP^1$, which coincides with the $k$-th exceptional divisor in the minimal resolution. Similar arguments apply to the cases $k=1$ and $k=n$.
\end{proof}

Moreover, for general framed Lagrangian branes, we can construct the holomorphic Lagrangians inside the Nakajima quiver variety by taking away several copies of the compact components. This gives a 'dual' construction of the above proposition. 

\begin{example}
	As a concrete illustration, consider the $A_2$ case, and let $\bL^\fr$ be the framed Lagrangian brane of rank $(\bv=(1,1),\bw=(1,1))$. Take $n=k=1$ in the Corollary, we first observe that $\bMC(\bv- \mathbf{e}^1, \bw)$ is a single point. Moreover, $\HF^2((\bL^\fr,\cE),(\bL^\fr,\cE'))= \mathrm{Coker}(\tau)$ is a torsion sheaf supported on a subvariety defined by $B_{12}=i_1=i_2=0.$ This follows from the fact that $\HF^2((\bL^\fr,\cE),(\bL^\fr,\cE'))$ is fiberwise dual to $\HF^0((\bL^\fr,\cE'),(\bL^\fr,\cE))$. Consequently, the only nonzero arrows are $[B_{12},j_1,j_2]$, which defines a holomorphic Lagrangian $\CP^1 \subset \bMC(\bv, \bw).$ Similarly, one can obtain the other holomorphic Lagrangian by $\HF^2((\bL^\fr,\cE),(\bL^\fr,\cE")),$ where $\cE"$ has rank $(\bv-\mathbf{e}^2, \bw).$
\end{example}

This example demonstrates the mechanism in a simple case. More generally, in the ADE setting and for arbitrary dimension vectors, one can construct, instead of irreducible components, the union of holomorphic Lagrangians inside the corresponding Nakajima quiver varieties.

\begin{thm}\label{thm: lag}
	Let $D$ be a graph, $(\bL^\fr,\cE)$ be a stable framed Lagrangian brane of rank $(\mathbf{v},\mathbf{w})$ and $F$ be the framing Lagrangian brane of rank $\mathbf{w}$. Then the support of the degree two Floer cohomology $\HF^2((\bL^\fr,\cE),F)$ is a subvariety in $\bM (\mathbf{v},\mathbf{w})$ defined by $i=0$.
\end{thm}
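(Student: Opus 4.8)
The plan is to compute the Floer complex $\CF^*(F,(\bL^\fr,\cE,b))$ in the Morse model, recognise it as a two‑term truncation of a Nakajima monad, and read off that its top cohomology is controlled entirely by the framing maps $i$.

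Concretely, I would first compute $\CF^*(F,(\bL^\fr,\cE,b))$ over $\bMC(\bv,\bw)$ exactly as in \cite[Thm.~5.13]{HLT24} and in the proof of Theorem~\ref{thm:H}. Since $F$ carries no $H^1$ or $H^2$, its only self‑generator is the fundamental class $M_F$, in degree $0$ with coefficient in $\End(\underline{\C}_F)\cong\C$; the clean intersections of $F$ with the spheres contribute generators $I^{(l)}_v$ (one per intersection point) in degree $1$, with coefficients in $\Hom(\underline{\C}_F,\cE|_{\bS_v})\cong V_v$. So $\CF^*(F,(\bL^\fr,\cE,b))$ is a two‑term complex $\C\xrightarrow{\ \sigma\ }L(W,V)$ in the notation of Remark~\ref{rem: tau}, and by the unital property of $M_F$ — after the usual change of coordinates absorbing the higher constant‑polygon terms, as in \cite[Lem.~3.4]{HKL23} and \cite[Thm.~5.2]{HLT24} — the differential is $\sigma(\phi)=\phi\cdot i$, scalar multiplication of the framing data $i=(i^{(l)}_v)$.

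Next I would apply the fiberwise duality from the perfect Morse functions (as recalled before Theorem~\ref{thm:H}): $\CF^*((\bL^\fr,\cE,b),F)$ is dual to $\CF^{2-*}(F,(\bL^\fr,\cE,b))$ with dual differentials over $\bMC(\bv,\bw)$, so that
$$\HF^2\big((\bL^\fr,\cE),F\big)\ \cong\ \HF^0\big(F,(\bL^\fr,\cE)\big)^{\vee},$$
and the two sheaves have the same support. Equivalently, by Proposition~\ref{prop: Hom-Ext} (applicable since $\cA_{\bL^\fr}$ is the framed preprojective algebra, hence admissible), $\HF^0(F,(\bL^\fr,\cE,b))\cong\Hom_{\cA_{\bL^\fr}}(b_F,b)$, where $b_F$ is the representation attached to the brane $(F,\underline{\C}_F)$ — one‑dimensional at the framing vertex, zero on the $\bv$‑vertices, all arrows zero. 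Reading off the commutation relations of a homomorphism $b_F\to b$ (those against the $B_a$ and the $j^{(l)}_v$ being vacuous), such a homomorphism is a scalar $\phi\in\C$ with $\phi\cdot i^{(l)}_v=0$ for every framing arrow, i.e.\ $\phi\cdot i=0$.

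Hence the fiber of $\HF^2((\bL^\fr,\cE),F)$ over $b=[B,i,j]$ is nonzero precisely when $i=0$, so its support is the closed subvariety $\{i=0\}\subset\bM(\bv,\bw)$. No stability input is needed here: the framing bundle has rank one, so $\ker\sigma\neq 0$ already forces $i=0$. I expect the main obstacle to be the first step — writing the Floer complex and justifying the coordinate change in the present framing convention (a single rank‑one $F$ obtained by the Lagrangian surgery of the Construction of Section~\ref{sec:Hecke}), so that the degree‑zero differential is precisely $\phi\mapsto\phi\cdot i$ with no surviving higher‑order corrections. The duality, and the identification with $\Hom_{\cA_{\bL^\fr}}(b_F,b)$ via Proposition~\ref{prop: Hom-Ext}, are then formal.
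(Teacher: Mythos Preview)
Your proposal is correct and follows essentially the same approach as the paper: compute the two-term complex $\CF^*(F,(\bL^\fr,\cE,b))$ via the unit property of $M_F$, dualize, and read off the support $\{i=0\}$. One simplification: the coordinate change you flag as the ``main obstacle'' is not needed here, because $M_F$ is a strict unit and hence only $m_2(\xi M_F,\sum i_k I_k)$ contributes to $\sigma$ --- there are no higher constant-polygon corrections to absorb at this step (unlike for $\tau$ in Theorem~\ref{thm:H}); the paper computes $\sigma$ and its dual $\tau$ directly in one line each, and your alternative via Proposition~\ref{prop: Hom-Ext} is extra machinery that is not required.
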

 \begin{proof}
   	Let's first compute the Floer complex $\CF^*((\bL^\fr,\cE),F)$. As $\bL^\fr$ and $F$ intersect cleanly, using the Morse model, the Floer complex is generated by the minimum point $P_F$ of the framing Lagrangian, which is of degree 2, and the immersed sectors $J$ from the compact Lagrangian spheres to the framing Lagrangian, which have degree 1. Thus, the Floer complex is of the following form: \begin{equation}
  	 \CF^*((\bL^\fr,\cE),F): L^1((\bL,\cE),F) \xrightarrow{\tau}  L^2(F,F),
   	\end{equation} which is fiberwise dual to \begin{equation}
   	\CF^*(F,(\bL^\fr,\cE)):L^0(F,F)  \xrightarrow{\sigma}  L^1(F,(\bL,\cE)).
   \end{equation} Here $\sigma(\xi)=m_1^{b,0}(\xi M_F)=m_2(\xi M_F, \sum i_k I_k)= \sum_k (i_k \xi) I_k,$ since $M_F$ is the fundamental class of the framing Lagrangian. And $\tau(\eta)= m_1^{b,0}(\sum \eta_k J_k)= \sum m_2(i_k I_k, \eta_k J_k)= (\sum_{k} \eta_ki_k ) P_F$. Therefore, $\HF^2((\bL^\fr,\cE),F)$ is a coherent sheaf with the schematic support $$L(\mathbf{v}):=\{b=[(B,i,j)] \in \bM (\mathbf{v},\mathbf{w}) \mid i=0 \}.$$ 
   \end{proof}

\begin{cor} \label{cor: hlag}
	Let $D$ be an ADE Dynkin diagram, $(\bL^\fr,\cE)$ be a stable framed Lagrangian brane of rank $(\mathbf{v},\mathbf{w})$ and $F$ be the framing Lagrangian brane of rank $\mathbf{w}$. Then the support $L(\mathbf{v})$ of $\HF^2((\bL^\fr,\cE),F)$ forms a holomorphic Lagrangian subvariety in $\bM (\mathbf{v},\mathbf{w})$ and $\HF^2((\bL^\fr,\cE),F)$ is the structure sheaf of $L(\mathbf{v})$.
\end{cor}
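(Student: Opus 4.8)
The plan is to reduce the statement to Nakajima's theorem that the central fibre of $\pi$ is a holomorphic Lagrangian in the ADE case. By Theorem~\ref{thm: lag} the relevant Floer complex is the two-term complex $L^1((\bL,\cE),F)\xrightarrow{\tau}L^2(F,F)$ with $\tau(\eta)=\left(\sum_k \tr(\eta_k i_k)\right)P_F$, where $L^2(F,F)$ is the trivial line bundle over $\bM(\bv,\bw)$ (as $F$ contributes the single generator $P_F$). As $\eta$ ranges over $L((\bL,\cE),F)$, the sections $\sum_k\tr(\eta_k i_k)$ generate exactly the ideal sheaf $\mathcal{I}$ of matrix entries of the tautological section $i\in\Gamma\!\left(\bM(\bv,\bw),L(\underline{W},\mathcal{V})\right)$ (with $\underline{W}$ the constant framing bundle and $\mathcal{V}$ the tautological bundle of Remark~\ref{rem: tau}); hence $\HF^2((\bL^\fr,\cE),F)\cong\CO_{\bM(\bv,\bw)}/\mathcal{I}$, supported on the closed subscheme $L(\bv)=\{i=0\}$. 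It remains to prove that, for $D$ of ADE type, $L(\bv)$ is a reduced holomorphic Lagrangian.

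The key step is to identify the set $L(\bv)=\{i=0\}$ with Nakajima's Lagrangian $\mathfrak{L}(\bv,\bw):=\pi^{-1}([0])$, where $\pi:\bM_\zeta(\bv,\bw)\to\bM_0(\bv,\bw)$ is the projective morphism to the GIT quotient at the trivial stability condition and $[0]$ is the image of the origin. For the inclusion $L(\bv)\subseteq\mathfrak{L}(\bv,\bw)$: on $\mu^{-1}(0)\cap\{i=0\}$ the moment-map equation gives $\epsilon BB=-ij=0$, so $B$ is a representation of the preprojective algebra $\Pi(D)$; since $D$ is Dynkin, $\Pi(D)$ is finite-dimensional, hence $B$ is nilpotent, so every closed-string invariant $\tr(B_{a_1}\cdots B_{a_r})$ vanishes, while every other generator of $\C[\mu^{-1}(0)]^{G}$ is (by the Le~Bruyn--Procesi description) the trace of a cyclic path through the framing vertex and therefore contains a factor of $i$ and also vanishes; thus all generating invariants vanish along $\{i=0\}$, so this locus maps to $[0]$. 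For the reverse inclusion: if $[B,i,j]$ is $\zeta$-stable and lies in $\pi^{-1}([0])$, then in particular all the invariant matrices $j_l\circ B^{p}\circ i_m$ vanish, so the smallest $B$-invariant subspace $U\subseteq V$ containing $\Im i$ is contained in $\Ker j$; the stability condition of Remark~\ref{rem:stability} ($\zeta_\R>0$) forbids nonzero $B$-invariant subspaces of $\Ker j$, so $U=0$ and $i=0$. This proves $L(\bv)_{\mathrm{red}}=\mathfrak{L}(\bv,\bw)$ as sets.

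Next I would invoke Nakajima's theorem (\cite[Thm.~5.8]{Nak98}; see also \cite{Nak94}): for $D$ of finite ADE type, $\mathfrak{L}(\bv,\bw)=\pi^{-1}([0])$ is a holomorphic Lagrangian subvariety of $\bM(\bv,\bw)$, i.e.\ every irreducible component has complex dimension $\tfrac{1}{2}\dim_\C\bM(\bv,\bw)$ and the holomorphic symplectic form vanishes on its smooth locus. This is exactly where the ADE hypothesis is used: for non-Dynkin $D$ the algebra $\Pi(D)$ is infinite-dimensional, $B$ need not be nilpotent on $\mu^{-1}(0)$, the inclusion $\{i=0\}\subseteq\pi^{-1}([0])$ fails, and $\{i=0\}$ can be strictly larger than half-dimensional, as in Example~\ref{ex: cexample}.

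The last and, I expect, most delicate point is upgrading this set-theoretic identification to an isomorphism of \emph{schemes}, i.e.\ showing that the ideal $\mathcal{I}$ cut out by the entries of $i$ is radical, so that $\HF^2((\bL^\fr,\cE),F)\cong\mathrm{Coker}(\tau)$ is literally the structure sheaf of the variety $L(\bv)$. Since $\bM(\bv,\bw)$ is smooth for generic $\zeta$ and $L(\bv)_{\mathrm{red}}$ is pure of dimension $\tfrac{1}{2}\dim_\C\bM(\bv,\bw)$ by the Lagrangian property just established, Serre's criterion reduces this to showing that $\{i=0\}$ is generically reduced along each component and has no embedded component. Generic reducedness can be read off component by component from the explicit models --- each component being generically an attracting cell of a $\C^*$-action, as in the $A_n$ computations preceding this corollary (the exceptional $\CP^1$'s of the minimal resolution glued along such cells) --- while the absence of embedded components I would treat either from the local structure along the singular strata or by appealing to the scheme-theoretic description of $\mathfrak{L}(\bv,\bw)$ by $i=0$ in Nakajima's work. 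If one instead reads "structure sheaf of $L(\bv)$" simply as the cokernel sheaf $\CO_{\bM}/\mathcal{I}$, this last point is vacuous and only the reduced Lagrangian property is needed, which is furnished by $L(\bv)_{\mathrm{red}}=\pi^{-1}([0])$ together with Nakajima's theorem.
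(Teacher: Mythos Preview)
Your approach matches the paper's: show $L(\bv)=\pi^{-1}([0])$ via nilpotency of $B$ (finite-dimensionality of the Dynkin preprojective algebra, as in Lusztig/Nakajima), then invoke Nakajima's theorem that $\pi^{-1}([0])$ is holomorphic Lagrangian. You are in fact more careful than the paper, which only argues the inclusion $L(\bv)\subseteq\pi^{-1}([0])$ explicitly (deferring the characterization to \cite[Lemma~5.9]{Nak94}) and does not address the reducedness of $\{i=0\}$ at all---its structure-sheaf claim rests solely on the observation that the gauge group on $F$ is trivial, so $L^2(F,F)\cong\CO$ and $\HF^2\cong\CO/\mathcal{I}$.
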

\begin{proof}
	By above Theorem, for any element $b=[(B,0,j)] \in L(\mathbf{v})$, we have $\sum_{h(a)=v} \epsilon(a)B_a B_{\bar{a}}=0.$ It remains to show $L(\mathbf{v})=\pi^{-1}([0]),$ where $\pi: \bM (\mathbf{v},\mathbf{w}) \to \bM_0 (\mathbf{v},\mathbf{w})$ is the canonical projective morphism. According to Lemma 5.9 in \cite{Nak94}, it suffices to show that for any $b=[(B,0,j)]$, the closure of the orbit $G_{\mathbf{v}} \cdot B$ contains $0$. This is done by the Proposition 14.2 in \cite{Lus91} or Proposition 6.7 in \cite{Nak94}, where the authors showed any representations of the unframed ADE quiver $Q$ such that $\sum_{h(a)=v} \epsilon(a)B_a B_{\bar{a}}=0$ are nilpotent. Hence, the closure of the orbit contains zero. 
	
	Moreover, by the definition of the Maurer-Cartan deformation space, the automorphism groups of the trivial vector bundles over $F$ remain in the construction. Therefore, $\HF^2((\bL^\fr,\cE),F)$ is the structure sheaf, or the trivial holomorphic line bundle over $L(\mathbf{v})$.
\end{proof}

Without the assumption that $D$ is an ADE Dynkin diagram, the subvariety defined by $i=0$ is not a holomorphic Lagrangian.
\begin{example}\label{ex: cexample}
	Recall that the Nakajima quiver variety $\cM(\bv,\bw)$ has dimension $\bv^{t}(2\bw-C \bv)$, where $C$ is the Cartan matrix of the graph $D$, see for example Equation (4.6) in \cite{Nak94}.
	
	In particular, take $D$ to be an affine $A_1$ graph and $\bv=\bw=(1,1)$. Then $\cM(\bv,\bw)$ has dimension 4. However, the subvariety defined by $i_1=i_2=0$ only has dimension 3. Indeed, for $\bv=(1,1)$, the moment map condition $B_{\bar{1}}B_1=B_{\bar{2}}B_2$ is equivalent to $B_1B_{\bar{1}}=B_2B_{\bar{2}}$ if $i_1=i_2=0$. Thus, by dimension count, the subvariety defined by $i=0$ is not a holormophic Lagrangian.
	
	A similar phenomenon occurs in the presence of edge loops. For example, let $D$ be the affine $A_0$ graph, and take $\bv=\bw=1$. In this case, $(B_1,B_{\bar{1}})$ are complex numbers. Thus $[B_1,B_{\bar{1}}]=0$. The moment map equation implies that $ij=0$. However, stability requires $j \neq 0$. Hence, $\cM(1,1)$ is isomorphic to the subvariety defined by $i=0$, which is not a holomorphic Lagrangian.
\end{example}

\subsection{Structural Analogy with Hecke Correspondence}
While we do not construct creation and annihilation operators in this work, we observe that certain geometric operations in Lagrangian Floer theory exhibit structural similarities with Hecke correspondences.

In the previous section, we observed that the support of the fiberwise cohomology $\HF^0((\bL^\fr,\cE_k),(\bL^\fr,\cE_j))$ carries a structure analogous to the convolution product on Borel-Moore homology. In this subsection, we focus on the case that rank $\cE_k=(0,\bw)$, and the fiberwise support is a holomorphic Lagrangian $L(\bv_j)$, as shown in Corollary \ref{cor: hlag}.

We begin with the following lemma, which is useful for our later discussions.

\begin{lem}
	If the rank of $\cE_1$ is less than or equal to the rank of $\cE_2$, then the $A_\infty$-product $$m_2^{0,b_1,b_2}: \HF^0(F,(\bL^\fr,\cE_1)) \otimes \HF^0((\bL^\fr,\cE_1),(\bL^\fr,\cE_2)) \to \HF^0(F,(\bL^\fr,\cE_2))$$ is fiberwise injective.
\end{lem}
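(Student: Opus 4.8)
The plan is to make the three Floer complexes explicit in the Morse model, identify the two tensor factors in the source with framed quiver data, and then reduce fiberwise injectivity to the stability condition of Remark~\ref{rem:stability}.

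First I would record the two complexes already computed in the proof of Theorem~\ref{thm: lag}: $\CF^*(F,(\bL^\fr,\cE_m))$ is the two--term complex $L^0(F,F)\xrightarrow{\sigma_m}L^1(F,(\bL,\cE_m))$ whose differential $\sigma_m$ is induced by the framing data $i_m$, and $L^0(F,F)\cong\C$ is spanned by the fundamental class $M_F$ of the framing Lagrangian. Hence, over a point $b_m=[B_m,i_m,j_m]$, the fiber of $\HF^0(F,(\bL^\fr,\cE_m))$ equals $\C\cdot M_F$ when $i_m=0$, i.e. when $b_m\in L(\bv^m)$ in the notation of Theorem~\ref{thm: lag}, and is $0$ otherwise. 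Next, by the framed analogue of Proposition~\ref{prop: Hom-Ext} --- the identification already used in the proof of Theorem~\ref{thm:H} --- the fiber of $\HF^0((\bL^\fr,\cE_1),(\bL^\fr,\cE_2))$ over $(b_1,b_2)$ is the space $\Hom_{\cA}(b_1,b_2)$ of morphisms $(\xi,d)\colon b_1\to b_2$ of framed quiver representations, i.e. pairs satisfying $B_2\xi=\xi B_1$, $\xi i_1=i_2 d$ and $j_2\xi=d j_1$.

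The remaining input is the $m_2$--product. Because $M_F$ is the fundamental class of $F$, because the output of $m_2^{0,b_1,b_2}(M_F,-)$ is supported over the $F$--self-intersection sector, and because $H^{>0}(F)=0$, the same unital computation as in the proofs of Theorems~\ref{thm: lag} and \ref{thm:H} yields
\[
m_2^{0,b_1,b_2}\big(M_F,(\xi,d)\big)=d\cdot M_F',
\]
where $M_F'$ is the corresponding degree-zero generator of $\CF^0(F,(\bL^\fr,\cE_2))$; in particular only the framing component $d$ of $(\xi,d)$ contributes to the output. Granting this, the conclusion is immediate. At any point $(b_1,b_2)$ where the source has a nonzero fiber we have $b_1\in L(\bv^1)$, hence $i_1=0$, and the map is $(\xi,d)\mapsto d\cdot M_F'$ on $\Hom_{\cA}(b_1,b_2)$. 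If $(\xi,d)\neq 0$ but $d=0$, then $\Im(\xi)\subset V^2$ is an $I$-graded $B_2$--invariant subspace contained in $\Ker(j_2)$, so $\xi=0$ by the stability of $b_2$ (Remark~\ref{rem:stability}) --- a contradiction; hence $d\neq 0$ whenever $(\xi,d)\neq 0$. Moreover $i_2 d=\xi i_1=0$ with $d\neq 0$ forces $i_2=0$, so $b_2\in L(\bv^2)$ and $M_F'$ represents the nonzero generator of $\HF^0(F,(\bL^\fr,\cE_2))|_{b_2}$; therefore $d\cdot M_F'\neq 0$, which is precisely the asserted fiberwise injectivity.

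The step I expect to be the main obstacle is the middle one: verifying that $m_2^{0,b_1,b_2}(M_F,(\xi,d))$ carries no residual instanton contribution involving the $\bL$--component $\xi$, so that it equals $d\cdot M_F'$ on the nose. I would handle this exactly as the analogous products in Section~\ref{sec: H}: the output sector is the $F$--$F$ clean intersection, so by the combinatorics of the contributing polygons together with the unital property of $M_F$ the only surviving contribution is the constant triangle, while any positive--area corrections are normalised away by the coordinate change of Theorem~\ref{thm:H}. The hypothesis $\bv^1\le\bv^2$ keeps us in the creation direction in which these identifications, and the relevant moduli, are those set up in Sections~\ref{sec: gen} and \ref{sec:Hecke}.
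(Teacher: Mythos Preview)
Your proof is correct and follows essentially the same approach as the paper: compute $m_2^{0,b_1,b_2}(M_F,(\xi,d))=d\cdot M_F'$ via the unital property of the fundamental class, and then invoke the stability argument from the proof of Theorem~\ref{thm:H} (equivalently Corollary~\ref{cor:supp}) to conclude that $d\neq 0$ whenever $(\xi,d)\neq 0$. Your additional steps---checking explicitly that the source is nonzero only when $i_1=0$, and that the output $d\cdot M_F'$ is indeed a nonzero class in the target---are correct elaborations that the paper leaves implicit (the latter is automatic since $\CF^{-1}(F,(\bL^\fr,\cE_2))=0$, so closed degree-zero classes are never exact).
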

\begin{proof}
	Let $\sum \xi_1 M_{12}+ d_1M_{F_{12}}$ be a nonzero element in $\HF^0((\bL^\fr,\cE_1),(\bL^\fr,\cE_2))$. Here we use the same notations as in Section \ref{sec:Hecke}. As shown in the proof of Theorem \ref{thm:H}, $\HF^0(F,(\bL^\fr,\cE_1))$ is generated by the maximum point of the framing Lagrangian $M_{F_1}$.
	
	Since $M_{F_1}$ is the fundamental class of the framing Lagrangian, we have $m_2^{0,b_1,b_2}(M_{F_1},\sum \xi_1 M_{12}+ d_1M_{F_{12}})=m_2^{0,b_1,b_2}(M_{F_1},d_1 M_{F_{12}})=d_1 M_{F_2},$  Thus,  $m_2^{0,b_1,b_2}$ is injective.
\end{proof}

Since $m_2^{0,b_1,b_2}$ is injective, for any subsheaf $\cF$ of $\HF^0(F,(\bL^\fr,\cE_1))$, the $A_\infty$-product $m_2^{0,b_1,b_2}$ would induce a correspondence on the support of $\cF$. 

Now, let $\mathbf{v}^2= \mathbf{v}^1 +  \mathbf{e}^k$ as before. For simplicity, let's denote the fiberwise cohomology $\HF^0((\bL^\fr,\cE_1),(\bL^\fr,\cE_2))$ by $\cF^k_{\mathbf{v}^2}$ with support lying in the Hecke correspondence $\cS_k(\mathbf{v}^1,\mathbf{w})$. Consider the action of the formal sum of $\cF^k_{\mathbf{v}^2}$ on $\bigoplus \limits_{\mathbf{v}} \HF^0(F,(\bL^\fr,\cE))$ via $m_2^{0,\bb,\bb}$ for $\forall k$:
\begin{equation}
	F_k= \sum \limits_{\mathbf{v}^2} \cF^k_{\mathbf{v}^2} \bullet : \bigoplus \limits_{\mathbf{v}}  \HF^0(F,(\bL^\fr,\cE)) \to \bigoplus \limits_{\mathbf{v}}  \HF^0(F,(\bL^\fr,\cE)).
\end{equation}
In other words, for any $\alpha \in \bigoplus \limits_{\mathbf{v}} \HF^0(F,(\bL^\fr,\cE)),$ we define $F_k(\alpha):= m_2^{0,\bb,\bb}(\alpha, \beta) \in \bigoplus \limits_{\mathbf{v}} \HF^0(F,(\bL^\fr,\cE))$ for $\forall \beta \in \cF^k_{\mathbf{v}^2}.$ In particular, since $m_2^{0,\bb_1,\bb_2}$ is injective, the support of $$m_2^{0,\bb_1,\bb_2}: \HF^0(F,(\bL^\fr,\cE_1)) \otimes \HF^0((\bL^\fr,\cE_1),(\bL^\fr,\cE_2)) \to \HF^0(F,(\bL^\fr,\cE_2))$$ is $p_2(p_1^{-1}(L(\mathbf{v}^1))\cap \cS_k(\mathbf{v}^1,\mathbf{w}))$, which closely resembles the structure of the creation operator $F_k$.

Similarly, we define the left action of the formal sum of $\cF^k_{\mathbf{v}^2}$ on $\bigoplus \limits_{\mathbf{v}} \HF^0(F,(\bL^\fr,\cE))$ and then take its dual:
\begin{equation}
	E_k= (\sum \limits_{\mathbf{v}^2} \cF^k_{\mathbf{v}^2} (\bullet)^\lor)^\lor : \bigoplus \limits_{\mathbf{v}} \HF^0(F,(\bL^\fr,\cE)) \to \bigoplus \limits_{\mathbf{v}} \HF^0(F,(\bL^\fr,\cE)).
\end{equation}
More precisely, for any $\alpha \in \bigoplus \limits_{\mathbf{v}} \HF^0(F,(\bL^\fr,\cE)),$ we define $E_k(\alpha):= (m_2(\beta,(\alpha)^\lor))^\lor \in \bigoplus \limits_{\mathbf{v}} \HF^0(F,(\bL^\fr,\cE))$ for $\forall \beta \in \cF^k_{\mathbf{v}^2}.$ Here, the dual $(\alpha)^\lor$ is the dual element in $\HF^2((\bL^\fr,\cE),F)$. In particular, the support of $$m_2^{0,\bb_1,\bb_2}: \HF^0((\bL^\fr,\cE_1),(\bL^\fr,\cE_2)) \otimes \HF^2((\bL^\fr,\cE_2),F) \to \HF^2((\bL^\fr,\cE_1),F)$$ is $p_1(p_2^{-1}(L(\mathbf{v}^1))\cap \cS_k(\mathbf{v}^1,\mathbf{w}))$, which closely resembles the structure of the annihilation operator $E_k$.

Besides, we also have the identity operator on $\HF^0(F,(\bL^\fr,\cE))$, which is given by $1_{\mathbf{v}}=\sum M_{11}+M_{F_{11}} \in \HF^0((\bL^\fr,\cE),(\bL^\fr,\cE))$. More precisely, we define:
\begin{equation}
	H_{\mathbf{v}}= m_2(\bullet,1_{\mathbf{v}}) : \HF^0(F,(\bL^\fr,\cE_1)) \to \HF^0(F,(\bL^\fr,\cE)).
\end{equation}
Sometimes we will also write $H_{\mathbf{v}}$ as $\Delta(\mathbf{v},\mathbf{w}).$

In geometric representation theory, Nakajima introduced the creation and annihilation operators on the Borel-Moore homology $\bigoplus \limits_{\mathbf{v}} H_{\mathrm{top}}(\mathrm{Supp}\,\HF^0(F,(\bL^\fr,\cE)))$ via Hecke correspondence. As a result, this direct sum forms an integrable highest weight module over the deformed Kac-Moody algebra, see Theorem 10.2 in \cite{Nak98}. 

The operators constructed above are structurally analogous to those defined by Nakajima. The above discussion does not provide a literal identification with Hecke correspondences, but rather suggests that Floer theory captures a shadow of the representation-theoretic structures encoded in Nakajima’s work.

\subsection{Modified Quiver Varieties}
In this subsection, we show that the Maurer–Cartan deformation spaces of certain framed Lagrangian branes can be identified with modified quiver varieties. Moreover, for these Lagrangian branes, the construction of Hecke correspondences for the modified quiver varieties also applies.

To make this identification explicit, we now describe the construction of these modified framed Lagrangian branes. Let's first take a framed Lagrangian brane $(\bL^\fr,\cE)$ in the plumbing space as in Section \ref{sec:Hecke}. Recall that each compact connected component in the domain of $\bL^\fr$ is a sphere. In particular, each compact component admits a perfect Morse function with a unique maximum point and a unique minimum point. Let $\widetilde{\bL}_k^\fr=\bL^\fr \setminus \{p_i\}_{i \neq k \in I},$ where $p_i$ is the minimum point of the perfect Morse function on the $i$-th sphere $\bS^2$. Furthermore, let's fixed a basis of $\cE$ over each sphere except the $k$-th sphere. Let's write the modified framed Lagrangian brane as $(\widetilde{\bL}_k^\fr,\cE).$ In the following, we will restrict to the Fukaya subcategory generated by $\widetilde{\bL}_k^\fr$ and take the stablity condition considered above.

\begin{prop}
	Up to a change of coordinates, the Maurer-Cartan space of $(\widetilde{\bL}_k^\fr,\cE)$ is the modification of the quiver variety introduced by Nakajima, $$\bM C(\widetilde{\bL}_k^{\fr},\cE)=\mu_k^{-1}(0)^s/GL(V_k),$$ where $\mu_k^{-1}(0)$ denotes the $k$-th component (i.e. $\Hom(V_k,V_k)$ component) of the obstruction equation $\mu^{-1}(0)$ of $(\bL^\fr,\cE)$ and $GL(V_k)$ is the automorphism group at the $k$-th vertex. Furthermore, let $\mathbf{v}^2= \mathbf{v}^1 + n \mathbf{e}^k$ for $n \in \Z_{\geq 0}$. Then the cohomological support of the Floer cohomology $\HF^2((\widetilde{\bL}_k^\fr,\cE_2),(\widetilde{\bL}_k^\fr,\cE_1))$ is a modification of the Hecke correspondence.
\end{prop}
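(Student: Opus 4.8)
The plan is to translate both assertions into statements about the Maurer--Cartan quiver algebra of Section~\ref{section:nc mirror} and then read them off from the Morse model, reusing the computations behind Corollary~\ref{cor: MC} and Theorem~\ref{thm:H}.

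\emph{The Maurer--Cartan space.} First I would record that, in the Morse model, the only degree-two generators of $\CF^*((\bL^\fr,\cE),(\bL^\fr,\cE))$ are the minimum points $P_i$ of the perfect Morse functions on the spheres $\bS^2_i$ ($i\in I$); the framing $F\cong\R^2$ contributes none, and all immersed generators have degree one. Moreover, after the coordinate change supplied by the local involution symmetry \cite[Lem.~3.4]{HKL23}, \cite[Thm.~5.2]{HLT24}, the obstruction term is $m_0^{\bb}=\sum_{i\in I}\big(\mu(B,i,j)\big)_i\,P_i$ with $\mu$ the Nakajima moment map. Deleting the points $p_i$ for $i\neq k$ erases the generators $P_i$ ($i\neq k$), so the obstruction equation for $\widetilde\bL_k^\fr$ retains only its $P_k$-component and hence reads $\mu_k(B,i,j)=0$; equivalently $\cA_{\widetilde\bL_k^\fr}$ is the framed double-quiver path algebra modulo the single preprojective relation at the vertex $k$, and the solution space of its obstruction equation is $\mu_k^{-1}(0)\subset M(\bv,\bw)$. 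Next, fixing a basis of $\cE$ on each sphere $\bS^2_i$ ($i\neq k$), together with the fact that $\cE|_F$ is already rigidified, removes the factors $GL(V_i)$ ($i\neq k$) from the gauge group $\cG=\prod_{i\in I}GL(V_i)$, leaving $GL(V_k)$. Feeding this into Definition~\ref{def: MC} with the stability of Remark~\ref{rem:stability}, one obtains $\bMC(\widetilde\bL_k^\fr,\cE)=\mu_k^{-1}(0)^s/GL(V_k)$, Nakajima's modified quiver variety; the phrase ``up to a change of coordinates'' refers precisely to the linearization of $m_0^{\bb}$ used above.

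\emph{The modified Hecke correspondence.} For $\bv^2=\bv^1+n\mathbf{e}^k$ I would rerun the argument of Theorem~\ref{thm:H}, working as there with the fiberwise dual complex $\CF^*((\widetilde\bL_k^\fr,\cE_1),(\widetilde\bL_k^\fr,\cE_2))$ over $\bMC(\widetilde\bL_k^{\fr},\cE_1)\times\bMC(\widetilde\bL_k^{\fr},\cE_2)$. The Morse model produces the truncated monad
\[
L^0((\bL,\cE_1),(\bL,\cE_2))\oplus L^0(F,F)\ \xrightarrow{\ \sigma\ }\ E((\bL,\cE_1),(\bL,\cE_2))\oplus L^1(F,(\bL,\cE_2))\oplus L^1((\bL,\cE_1),F)\ \xrightarrow{\ \tau\ }\ \Hom(\mathcal{V}_1^k,\mathcal{V}_2^k),
\]
whose degree-two term is only the vertex-$k$ summand of $L^2((\bL,\cE_1),(\bL,\cE_2))$ because the other minimum points have been removed, and whose first differential is the same $\sigma(\xi,d)=(B_2\xi-\xi B_1)\oplus(j_2\xi-dj_1)\oplus(\xi i_1-i_2 d)$ as in Theorem~\ref{thm:H}. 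By the fiberwise duality and Proposition~\ref{prop: Hom-Ext}, $\HF^2((\widetilde\bL_k^\fr,\cE_2),(\widetilde\bL_k^\fr,\cE_1))\cong\mathrm{Coker}(\sigma^\vee)$ is supported where $\Ker\sigma\neq 0$, i.e.\ where there is a nonzero homomorphism $(\xi,d)$ of framed double-quiver representations from $b_1$ to $b_2$ (dropping the relations at the vertices $i\neq k$ alters the objects but not the notion of a homomorphism). Exactly as in Theorem~\ref{thm:H}, the stability of Remark~\ref{rem:stability} forces $d\neq 0$ --- else $\Im\xi$ is a $B_2$-invariant subspace of $\Ker j_2$ --- and then $\Ker\xi=0$ --- else $\Ker\xi$ is a $B_1$-invariant subspace of $\Ker j_1$. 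Hence the support is the locus of pairs admitting an injective framed homomorphism $\xi:b_1\hookrightarrow b_2$ with $\dim(V_2/\xi V_1)=n\mathbf{e}^k$, which is the Hecke correspondence (and, for $n\geq 1$, its generalized version) for the modified quiver varieties $\mu_k^{-1}(0)^s/GL(V_k)$; as in Theorem~\ref{thm:H} the fibre $\Ker\sigma$ is one-dimensional once $d$ is normalized, so $\HF^2$ restricts to a line bundle on it.

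\emph{Main obstacle.} The delicate point is to give Floer-theoretic meaning to ``deleting a point $p_i$ from the Lagrangian sphere $\bS^2_i$'': one must verify that the pearl/Morse model for $\widetilde\bL_k^\fr$, built from the restricted Morse functions (which on each $\bS^2_i$, $i\neq k$, retain only the maximum), computes precisely the operations of $\bL^\fr$ with the outputs along the deleted $P_i$ erased --- in particular that no holomorphic polygon is lost or created through the puncture. This should follow from a removable-singularity argument: a polygon whose boundary limits onto the interior point $p_i$ extends across $p_i$, so the punctured moduli spaces agree with those that produce the $P_i$-components of $m_0^{\bL^\fr}$, none of which feed into the $P_k$-coefficient or into the truncated monad above. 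Beyond this, spelling out the match between the GIT stability of Remark~\ref{rem:stability} and Nakajima's stability for the $GL(V_k)$-quotient, and identifying our support with the exact form of Nakajima's modified Hecke correspondence, will require importing the relevant parts of Nakajima's analysis of modified quiver varieties.
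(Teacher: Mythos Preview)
Your proposal is correct and follows essentially the same route as the paper: both proofs recognize that deleting the minima $p_i$ ($i\neq k$) removes the degree-two outputs $P_i$ so that only the $P_k$-component of $m_0^b$ survives, and both rerun the monad computation of Theorem~\ref{thm:H} with the truncated final term $L^2((\bS_k,\cE_1),(\bS_k,\cE_2))$.

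The one place where your write-up diverges is the ``main obstacle'' paragraph. The paper does not need a removable-singularity argument: since $\bL^\fr$ is exact with constant primitive, the only pearl trajectories contributing to $m_0^b$ (and to $\tau$) are constant polygons at the immersed points, so puncturing the sphere away from those points cannot create or destroy any contribution --- it simply deletes the output generator $P_i$. This is the mechanism you should cite rather than the analytic extension argument you sketch. Your treatment is otherwise a bit more explicit than the paper's (e.g.\ spelling out why the gauge group collapses to $GL(V_k)$ and re-deriving $d\neq 0$, $\Ker\xi=0$ from stability), but the underlying logic is the same.
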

\begin{proof}
	Recall that in \cite{HLT24}, the authors showed that the obstruction equation $m_0^{b}=0$ of $(\bL^\fr,\cE)$ coincides, up to a change of coordinates, with the level-zero condition for the complex moment map $\mu$ defining the Nakajima quiver variety. More precisely, the obstruction term $m_0^b$ counts pearl trajectories with outputs at the minimum point of each sphere,  and the obstruction equations arise from the coefficients of these outputs. Since $\bL^\fr$ is exact with constant primitives, $m_0^b$ counts the constant discs as in \cite{HLT24}. Besides, by the construction of $\widetilde{\bL}_k^\fr$, the minimum point $P_i$ of the sphere $\bS^2_i$ is removed for $i\neq k$. Therefore, $m_0^b$ only has outputs at $P_k$, which precisely recovers the modified moment map equation at the 
	$k$-th vertex $\mu_k$. This completes the proof of the first part of the theorem.
	
	For the latter statement, we consider the fiberwise dual $\CF^*((\widetilde{\bL}_k^\fr,\cE_1),(\widetilde{\bL}_k^\fr,\cE_2))$ of $\CF^*((\widetilde{\bL}_k^\fr,\cE_2),(\widetilde{\bL}_k^\fr,\cE_1))$ as in the proof of Theorem \ref{thm:H}. Besides, by the construction of the modified framed Lagrangian branes, $\CF^*((\widetilde{\bL}_k^\fr,\cE_1),(\widetilde{\bL}_k^\fr,\cE_2))$ agrees with Equation \ref{eq:monad}, except the second differential $\tau$ and the final term. More precisely, we have the following complex:
	\begin{equation}
		L^0((\bL,\cE_1),(\bL,\cE_2)) \oplus L^0(F,F) \xrightarrow{\sigma} E((\bL,\cE_1),(\bL,\cE_2))
		\oplus L^1(F,(\bL,\cE_2))
		\oplus L^1((\bL,\cE_1), F)  \xrightarrow{\tau \mid_k} L^2((\bS_k,\cE_1),(\bS_k,\cE_2)),
	\end{equation} where $\bS_k$ is the $k$-th sphere and $\tau \mid_k$ is the restriction of $\tau$ to the component of immersed sectors with head or tail equal to $k$. Consequently, the result follows from this complex using arguments analogous to those in the proof of Theorem \ref{thm:H}.
\end{proof}	
	
	
\begin{rem}
	In \cite{Nak01}, the modified quiver variety $\bM C(\widetilde{\bL}_k^{\fr},\cE)$ is denoted as $\widetilde{\cM}(\mathbf{v},\mathbf{w})^{\circ}$, which is an open subvariety of $\widetilde{\cM}(\mathbf{v},\mathbf{w})$. The latter space, $\widetilde{\cM}(\mathbf{v},\mathbf{w})$, is the space of isomorphism classes of Maurer-Cartan elements $b$ of $(\bL_k^{\fr},\cE)$ such that $\HF^0(\bS_k,(\bL_k^{\fr},\cE,b)) = 0$. Note that in a neighborhood of the $k$-th sphere, the immersed Lagrangian is homeomorphic to the union of the sphere and several cotangent fibers. Along these lines, one can observe that $\widetilde{\cM}(\mathbf{v},\mathbf{w})$ is isomorphic to the product of the Nakajima quiver variety for the $A_1$ graph and an affine space, thereby recovering Nakajima's original observation.
\end{rem}
	
	Similarly, one can consider the Maurer-Cartan space of $\widetilde{\bL}_J^\fr=\bL^\fr \setminus \{p_i\}_{i \in I \setminus J},$ where $J$ is a subset of the set of vertices $I$. The analysis proceeds in a similar manner as in the previous case, and we omit the details of the proof.
	
	\begin{cor}
		Up to a change of coordinates, the Maurer-Cartan space of $(\widetilde{\bL}_J^\fr ,\cE)$ is $$\bM C(\widetilde{\bL}_J^{\fr},\cE)=\mu_J^{-1}(0)^s/GL_J,$$ where $\mu_J^{-1}(0)$ denotes the corresponding component of the obstruction equation $\mu^{-1}(0)$ of $(\bL^\fr,\cE),$ and $GL_J$ is the corresponding automorphism group.
	\end{cor}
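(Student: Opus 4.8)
The plan is to run the same argument as the preceding proposition (the case $|J|=1$), now bookkeeping the whole subset $I\setminus J$ of deleted minimum points in place of the single index $k$. First I would recall from \cite{HLT24} that the obstruction term $m_0^b$ of the framed Lagrangian brane $(\bL^\fr,\cE)$ is a degree-two element of the Floer complex counting pearl trajectories whose outputs lie at the minimum points $P_i$ of the perfect Morse functions on the spheres $\bS^2_i$; because $\bL^\fr$ is exact with constant primitive functions, only constant polygons contribute, and after the change of coordinates induced by the local anti-symplectic involution symmetry (as in \cite[Lem.~3.4]{HKL23} and \cite[Thm.~5.2]{HLT24}) the coefficient of $m_0^b$ at $P_i$ is exactly the $i$-th component $\mu_i$ of the complex moment map of Definition~\ref{def: stab}.

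Next I would use that, by construction, $\widetilde{\bL}_J^\fr=\bL^\fr\setminus\{p_i\}_{i\in I\setminus J}$ deletes precisely the minimum points indexed by $I\setminus J$. In the Morse model for $\widetilde{\bL}_J^\fr$ this removes the unique degree-two Floer generator $P_i$ at each vertex $i\notin J$, so $m_0^b$ can no longer output along those vertices and the Maurer--Cartan equation $m_0^b=0$ collapses to the subsystem $\{\mu_j=0:j\in J\}$, i.e. to $\mu_J^{-1}(0)$; meanwhile the remaining Floer data (the degree-zero maxima, the immersed sectors $X_a$ and the framing sectors $I_v,J_v$, and the first differentials) are unchanged, so the ambient affine variety is still the quiver representation space of Definition~\ref{def: stab}. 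As in the $|J|=1$ case one also fixes a basis of $\cE$ over every sphere not indexed by $J$; this gauge-fixes the corresponding factors of $\cG=\prod_i GL(V_i)$, leaving the residual gauge group $GL_J=\prod_{j\in J}GL(V_j)$ acting at the surviving vertices. Imposing the stability condition of Remark~\ref{rem:stability} exactly as before then identifies $\bMC(\widetilde{\bL}_J^\fr,\cE)$ with $\mu_J^{-1}(0)^s/GL_J$.

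The step that needs the most care is the claim that deleting the minimum points really does leave the rest of the $A_\infty$-structure and the stability locus intact — that is, that the deletion (like the Lagrangian surgery in the Construction) is a purely local modification near each node and does not introduce new generators, new relations, or alter the pearl counts that define the surviving differentials and $m_0^b$. One should also check that the modified, non-compact Lagrangian $\widetilde{\bL}_J^\fr$ still lies in a conical/convex neighborhood so that the Morse-model Fukaya subcategory it generates is well-defined, and that the superscript $s$ on $\mu_J^{-1}(0)$ is the restriction of the stability condition used for $\bL^\fr$. Granting these points, all of which are local and already implicit in the proof of the preceding proposition, the corollary follows.
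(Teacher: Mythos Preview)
Your proposal is correct and follows exactly the approach the paper intends: the paper in fact omits the proof entirely, stating only that ``the analysis proceeds in a similar manner as in the previous case,'' and your write-up is precisely the natural extension of the preceding proposition's argument from a single vertex $k$ to a subset $J\subset I$. The care points you flag (locality of the deletion, well-definedness of the Morse model on the punctured Lagrangian, inheritance of the stability condition) are not addressed separately in the paper either and are implicitly absorbed into the $|J|=1$ case.
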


\section{Local Homological Mirror Symmetry for ADE surfaces}\label{sec: CY}

\subsection{Stability Conditions and Lagrangian Floer Theory}

In the remaining sections, we will restrict the localized mirror functor to the subcategory that has no convergence issues and work over the complex number as before. 

Let $M$ be the symplectic manifold obtained by the plumbing the cotangent bundles $T^*\bS^2$ according to the graph $D$, and let $L$ (resp. $\bL^\fr$) be a (resp. framed) Lagrangian immersion, defined as the union of the zero sections (resp. together with one cotangent fiber over each component).
\begin{lem}
	Let $(\bL^\fr,E)$ denote the stable framed Lagrangian brane of rank $(\mathbf{v},\mathbf{w})$ with the deformation parameter $b:=\sum B_{a}X_a + \sum i_{v} I_v + \sum j_{v} J_v $, and let $G_k$ be a cotangent fiber at the $k-$th sphere other than the framing Lagrangian $F_k$. Then $\cF^{(\bL^\fr,E)}(G_k)$ is the tautological bundle over $\cM(\bv,\bw)$ corresponding to the $k$-th vertex, of rank $\bv_k$.
\end{lem}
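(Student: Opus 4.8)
The plan is to read off $\cF^{(\bL^\fr,E)}(G_k)$ directly from its definition as the family of Floer complexes $\left(\CF((\bL^\fr,E,b),G_k),\,m_1^{b,0}\right)$ over $\bMC_\zeta(\bv,\bw)\cong\cM(\bv,\bw)$ (using Corollary \ref{cor: MC}), and then to match the resulting bundle with the tautological bundle $\mathcal{V}_k$ of Remark \ref{rem: tau}. First I would pin down how $G_k$ meets $\bL^\fr$. Since $\bL^\fr$ is built by plumbing the zero sections $\bS^2_v$ of $T^*\bS^2$ together with the framing $F$, and $G_k$ is a cotangent fiber of $\bS^2_k$ taken at a point $p$ lying away from all plumbing regions and away from $F_k$, the fiber $G_k$ is contained in the un-glued part of $T^*\bS^2_k$. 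Hence $G_k$ is disjoint from every framing fiber and from every compact component of $\bL^\fr$ other than $\bS^2_k$, which it meets transversally in the single point $p$. Consequently $\CF((\bL^\fr,E,b),G_k)=\Hom\!\left(E|_p,\,\underline{\C}_{G_k}|_p\right)$ is supported on the single Floer generator at $p$ (as in the model computation $\HF^\ast(\bS^2,T^\ast_p\bS^2)\cong\C$ concentrated in degree $0$), so after the local trivialization of Remark \ref{rem:trivialization} it is the fixed vector space $\C^{\bv_k}$ in one degree.

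Because this complex has generators in a single degree, the deformed differential $m_1^{b,0}$ vanishes identically for degree reasons, so $\cF^{(\bL^\fr,E)}(G_k)$ is a genuine rank $\bv_k$ vector bundle over $\bMC_\zeta(\bv,\bw)$ whose fiber over a representation $[b]=[(B,i,j)]$ is the graded piece $V_k$. It remains to identify this with $\mathcal{V}_k$. The $A_\infty$-operations $m_k^{(\bL^\fr,b),-,\dots,-}$ defining the functor are strictly equivariant for the gauge group $G_\bv=\prod_i GL(V_i)$, and on the single generator over $\bS^2_k$ this action is through the $k$-th factor $GL(V_k)$ only: the factors $GL(V_j)$ with $j\ne k$ do not act on $E|_{\bS^2_k}$, and $G_k$ carries no gauge symmetry since it is an ordinary object of $\Fuk(M)$. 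Passing to the GIT quotient $\bMC_\zeta(\bv,\bw)=H^s_\zeta/G_\bv$ therefore realizes $\cF^{(\bL^\fr,E)}(G_k)$ as the associated bundle $H^s_\zeta\times_{G_\bv}\C^{\bv_k}$, which is exactly the tautological bundle $\mathcal{V}_k$ attached to the $k$-th vertex in Remark \ref{rem: tau} (up to the standard dualization built into the morphism-space convention $\Hom(\cE_0|_p,\cE_1|_p)$).

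The intersection count and the vanishing of the differential are immediate; the step that needs care is the equivariance bookkeeping in the last paragraph — confirming that the $G_\bv$-module structure on the Floer generator is precisely the defining representation of $GL(V_k)$, with no determinantal twist or contribution from the framing data, and tracking whether the $\Hom$-convention delivers $\mathcal{V}_k$ or its dual. I expect this to be the main (if minor) obstacle; it is settled by recalling that $\cE|_F=\underline{\C}_F$ has trivial gauge group (Definition \ref{def: flag}), that the trivializations of Remark \ref{rem:trivialization} are the same ones used to write the monad complex of \cite[Thm. 5.13]{HLT24}, and that the weights $T^A\,\mathrm{Hol}$ entering the $m_k$'s only ever insert the linear maps $B_a,i_v,j_v$ themselves, so no extra character can enter.
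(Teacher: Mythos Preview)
Your proposal is correct and follows the same approach as the paper: observe that $G_k$ meets $\bL^\fr$ transversally in a single point on $\bS^2_k$, so the Floer complex is concentrated in one degree and equals the fiber $E_k|_P$, which descends to the tautological bundle $\mathcal{V}_k$ under the $G_\bv$-quotient. You are considerably more thorough than the paper's three-line proof---in particular your explicit gauge-equivariance check and your worry about $\mathcal{V}_k$ versus $\mathcal{V}_k^\vee$ are points the paper glosses over---but the underlying argument is identical.
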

\begin{proof}
	Observe that the cotangent fiber $G_k$ intersects with $\bL^\fr$ transversally at a point $P$ at the $k$-th sphere. Hence, by definition, for any fixed $b_0 \in \cM(\bv,\bw)$, $\cF^{(\bL^\fr,\cE,b_0)}(G_k)= E_k\mid_P \langle P \rangle$. Therefore, $\cF^{(\bL^\fr,\cE,b_0)}(G_k)\cong \cE_k,$ where $\cE_k$ is precisely the tautological bundle associated with
	$k-$th vertex. In other words, the fiber $\cE_k$ at $b_0 \in \bMC(\bv,\bw)$ is given by the $GL_{\bv_k}$ representation $V_k$ assigned to the $k$-th vertex.
\end{proof}

\begin{rem}
	If we consider the noncommutative deformation space of $\bL^\fr$, then by definition, $\cF^{(\bL^\fr,\bb)}(G_k)$ is the projective module $\cA_{\bL^\fr} \cdot e_k$, where $e_k$ denotes the $k$-th idempotent.
\end{rem}

In particular, when $D$ is an affine ADE Dynkin diagram, we have the following description of the stability condition of the framed quiver representations, see Definition \ref{def: stab}. Here $\zeta^0_{\R}\in\mathbb{R}^{Q_0}$ denotes a generic stability parameter in the sense of Nakajima, while $\zeta_{\R}$ lies in the corresponding chamber (see Section 2.1 in \cite{Nak07} for details). This is not a new criterion, but rather the mirror translation of Nakajima’s notion of stability, providing a geometric realization of the stable moduli locus via the Maurer–Cartan deformation space.

\begin{lem}[Proposition 4.1 in \cite{Nak07}] \label{lem: stab}
	Let $D$ be an affine ADE Dynkin diagram, and $b_0=\sum B_{a}X_a + \sum i_{v} I_v + \sum j_{v} J_v$ be a solution of the Maurer-Cartan equation of $(\bL^\fr,E')$, i.e. $m_0^{b_0}=0$. Then 
	\begin{enumerate}
		\item $b_0$ is $\zeta_\R$-stable as a quiver representation if and only if $\HF^0((L,E,b),(\bL^\fr,E',b_0))$ vanishes except finitely many $b$ and $\HF^2((L,E,b),(\bL^\fr,E',b_0))=0$ for all $b \in \bMC(L,E)$.
		\item $b_0$ is $\zeta^0_\R$-stable as a quiver representation if and only if $$\HF^0((L,E,b),(\bL^\fr,E',b_0))=\HF^2((L,E,b),(\bL^\fr,E',b_0))=0$$ for all but finitely many $b\in \bMC(L,E)$.
		\item $b_0$ is $\zeta^0_\R$-stable as a quiver representation if and only if $$\HF^0((L,E,b),(\bL^\fr,E',b_0))=\HF^2((L,E,b),(\bL^\fr,E',b_0))=0$$ for all $b\in \bMC(L,E)$.
	\end{enumerate}
\end{lem}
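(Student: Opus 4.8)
The plan is to recognize the statement as Nakajima's \cite[Prop.~4.1]{Nak07}, transported through the localized mirror dictionary: $\HF^0$ and $\HF^2$ should become $\Hom$ and (dually) $\mathrm{Ext}^2$ of framed quiver representations, and the two inequalities of Definition~\ref{def: stab} should become the non-vanishing criteria for these groups. First I would compute the Floer complex $\CF^*((L,E,b),(\bL^\fr,E',b_0))$ with the Morse model, exactly as in \cite[Thm.~5.13]{HLT24} and in the proof of Theorem~\ref{thm:H}. Since $L$ is the un-framed immersed Lagrangian, $b$ is a representation of the preprojective algebra, i.e.\ a framed representation $(V^b,B^b,0,0)$ with zero framing; after the coordinate change furnished by the local involution symmetry \cite[Lem.~3.4]{HKL23} the complex takes the monad form
$$\bigoplus_v\Hom(V^b_v,V^{b_0}_v)\;\xrightarrow{\sigma}\;E(V^b,V^{b_0})\oplus\bigoplus_v\Hom(V^b_v,W_v)\;\xrightarrow{\tau}\;\bigoplus_v\Hom(V^b_v,V^{b_0}_v),$$
with $\sigma(\xi)=(B^{b_0}\xi-\xi B^b)\oplus(j_{b_0}\xi)$, where $W$ is the framing of $b_0$. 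As in Proposition~\ref{prop: Hom-Ext}, $\HF^0((L,E,b),(\bL^\fr,E',b_0))=\Ker\sigma$ is the space of framed-representation homomorphisms $b\to b_0$, i.e.\ intertwiners $\xi\colon V^b\to V^{b_0}$ with $j_{b_0}\xi=0$. Using the fiberwise Poincaré duality supplied by the perfect Morse functions, $\HF^2((L,E,b),(\bL^\fr,E',b_0))\cong\HF^0((\bL^\fr,E',b_0),(L,E,b))^\vee$, and the latter is by the same computation the space of intertwiners $\eta\colon V^{b_0}\to V^b$ with $\eta\,i_{b_0}=0$; this realizes in Floer theory the $\mathrm{Ext}^2\cong\Hom^\vee$ duality that Nakajima uses on the algebraic side.

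With this dictionary, the second step is to read off the stability conditions. There is a nonzero class in $\HF^0((L,E,b),(\bL^\fr,E',b_0))$ for some $b\in\bMC(L,E)$ precisely when $\Ker j_{b_0}$ contains a nonzero $B^{b_0}$-invariant $I$-graded subspace — take $b$ to be such a subspace $S=\Im\xi$ — which is exactly the failure of condition~(1) of Definition~\ref{def: stab}. Dually, there is a nonzero class in $\HF^2((L,E,b),(\bL^\fr,E',b_0))$ for some $b$ precisely when $\Im i_{b_0}$ is contained in a proper $B^{b_0}$-invariant subspace $T=\Ker\eta$, i.e.\ the failure of condition~(2). The \emph{quantitative} form of these inequalities — $\zeta_\R\cdot\dim S\le 0$ versus the strict version, and likewise for $T$ — is what separates the chamber of $\zeta_\R$ from Nakajima's distinguished chamber containing $\zeta^0_\R$; on the Floer side, replacing the strict inequality by the non-strict one is precisely the passage from ``the sheaf $b\mapsto\HF^0((L,E,b),(\bL^\fr,E',b_0))$ over $\bMC(L,E)$ vanishes identically'' to ``it has $0$-dimensional (finite) support'', because the destabilizing subspaces permitted by $\zeta_\R$ but forbidden by $\zeta^0_\R$ form only finitely many isomorphism classes inside the fixed module $b_0$.

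The third step assembles the equivalences. Part~(3) is the cleanest: $b_0$ is $\zeta^0_\R$-stable iff both strict inequalities hold, iff there is no destabilizing $S\subseteq\Ker j_{b_0}$ and no destabilizing $T\supseteq\Im i_{b_0}$ at all, iff $\HF^0$ and $\HF^2$ between $(L,E,b)$ and $(\bL^\fr,E',b_0)$ vanish for every $b\in\bMC(L,E)$. Part~(1) is the analogue in the $\zeta_\R$-chamber: condition~(1) in its weaker $\zeta_\R$-form still rules out all but finitely many destabilizing $S$, giving the ``all but finitely many $b$'' clause for $\HF^0$, while condition~(2) is still strict in this chamber, so $\HF^2$ vanishes for all $b$. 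Part~(2) then follows from (3) together with openness of $\zeta^0_\R$-stability: a $\zeta^0_\R$-stable $b_0$ admits no destabilizing sub- or quotient module, so none of the finitely many exceptional $b$ actually occur; conversely finiteness of the exceptional locus suffices to exclude any destabilizing $S$ or $T$ of positive $\zeta^0_\R$-weight, by a semicontinuity and dimension argument. At this point the statement coincides with \cite[Prop.~4.1]{Nak07}, which I would invoke for the remaining bookkeeping.

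I expect the real obstacle to be making the duality identification genuinely precise: one must check that the fiberwise Poincaré duality on $\CF^*$ — built from the perfect Morse functions and compatible with the local anti-symplectic involutions used to put $m_0^b$ into preprojective form — is honestly intertwined, under the monad identification, with the $2$-Calabi--Yau/Serre duality $\mathrm{Ext}^2_{\cA}\cong\Hom_{\cA}^\vee$ on the mirror side, including the twist by the framing data, so that the \emph{top}-degree Floer group detects condition~(2) and nothing shifted. The secondary difficulty is the chamber bookkeeping that separates ``vanishes'' from ``has finite support'' in moving between $\zeta_\R$ and $\zeta^0_\R$: this is a purely representation-theoretic computation with the dimension vectors in $\bMC(L,E)$ held fixed, exactly as in Nakajima's original proof.
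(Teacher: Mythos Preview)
Your proposal is correct and follows essentially the same approach as the paper: identify the Floer complex $\CF^*((L,E,b),(\bL^\fr,E',b_0))$ with Nakajima's monadic complex via \cite[Thm.~5.13]{HLT24} and the coordinate change from the local involution symmetry, then invoke \cite[Prop.~4.1]{Nak07}. The paper's own proof is in fact shorter than yours---it simply notes that the Floer complex equals the monadic complex (which is quasi-isomorphic to the framed torsion-free sheaf over the ALE space) and that under this identification $\HF^0$ and $\HF^2$ become exactly the obstruction groups appearing in Nakajima's stability criterion, so the statement \emph{is} \cite[Prop.~4.1]{Nak07}; your steps unpacking $\Ker\sigma$, the Poincar\'e/Serre duality, and the chamber bookkeeping between $\zeta_\R$ and $\zeta^0_\R$ are all details that the paper leaves entirely to that citation.
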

\begin{proof}
	By Theorem 5.13 of \cite{HLT24}, the Floer complex 
	\(\left(\CF^*((L,E,b),(\bL^{\fr},E',b_0)), m_1^{b,b_0}\right)\)
	is identified with the monadic complex, which is quasi-isomorphic to the (framed) torsion-free sheaf over the (compactification of) ALE space.
	Under this identification, the Lagrangian Floer cohomology $\HF^0$ and $\HF^2$ are the obstructions to be stable, which follows from Proposition 4.1 in \cite{Nak07}. 
\end{proof}

\begin{rem}
	Recall that if $D$ is an affine ADE Dynkin diagram, the derived McKay correspondence \cite{KV00, BKR01} shows that
	$$\mathbb{R}\mathrm{Hom}(\mathcal{T},-):D^b\mathrm{Coh}(\widetilde{\C^2/\Gamma}) \to D^b(\mathrm{mod}-\cA_\bL)$$ induces an equivalence between the derived categories, where $\Gamma$ is the finite subgroup of $SL_2(\C)$ corresponding to the graph $D$, $\widetilde{\C^2/\Gamma}$ is the crepant resolution of $\C^2/\Gamma$ and $\mathcal{T}$ is the tilting bundle.
	
	Under this functor, one can show that similar results in Lemma \ref{lem: stab} hold by replacing the Lagrangian brane $(L,E,b)$ by the universal brane $(\bL,\bb).$ 
\end{rem}

\subsection{Local Homological Mirror Symmetry}

Homological Mirror Symmetry proposed by Kontsevich \cite{Kon95} conjectures a derived equivalence 
$$\Fuk(M) \simeq D^b\mathrm{Coh}(\check{M})$$
associated to a mirror pair $(M, \check{M})$. 
In this subsection, we consider a localized version of this equivalence. On the symplectic side, we take the Fukaya subcategory generated by $\bL$. On the  mirror side, we consider the Maurer-Cartan algebra $\cA_\bL$. Using the localized mirror functor \cite{CHL17}, we would like to show that it gives a quasi-isomorphism of derived categories. As an application, this framework allows us to establish the mirror correspondence of the stablity condition for ADE surfaces (Corollary \ref{cor: T}). 

\begin{thm}\label{thm:qiso}
	Let $M$ be the plumbing of cotangent bundles $T^*\bS^2$ according to a non-Dynkin diagram $D$, and $\bL \subset M$ be the zero section. Then $\cF^{(\bL,\bb)}(\bL,\bb)$ gives a projective resolution of the preprojective algebra $\cA_\bL$ as an $\cA_\bL$-bimodule. Furthermore, $\cF^{(\bL,\bb)}$ induces a quasi-isomorphism $$\cF^{(\bL,\bb)}:\CF^*((\bL,\bb),U) \to \Hom_{\cA_\bL}(\cF^{(\bL,\bb)}(\bL,\bb), \cF^{(\bL,\bb)}(U)).$$ 
	
	In particular, the localized mirror functor $$\cF^{(\bL,\bb)}:\Fuk^{sub}(M) \to \mathrm{Perf}(\cA_\bL)$$ is a quasi-equivalence, where $\Fuk^{sub}(M)$ denotes the Fukaya subcategory split generated by $(\bL,\bb)$ and $\mathrm{Perf}(\cA_\bL)$ is the dg category of perfect $\cA_\bL$-modules.
\end{thm}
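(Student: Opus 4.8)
The plan is to derive the quasi-equivalence from three ingredients, carried out in order. (A) The complex $\cF^{(\bL,\bb)}(\bL,\bb)=\bigl(\CF^*((\bL,\bb),(\bL,\bb)),m_1^{\bb,\bb}\bigr)$, viewed as a complex of $\cA_\bL$-bimodules, is (up to the cohomological-degree convention coming from Poincar\'e duality on the spheres) the canonical self-dual length-two projective bimodule resolution of the preprojective algebra $\cA_\bL$. (B) For every object $U$ the chain map induced by the localized mirror functor $\cF^{(\bL,\bb)}\colon\CF^*((\bL,\bb),U)\to\Hom_{\cA_\bL}\bigl(\cF^{(\bL,\bb)}(\bL,\bb),\cF^{(\bL,\bb)}(U)\bigr)$ is a quasi-isomorphism. (C) The essential image of $\Fuk^{sub}(M)$ is the thick subcategory of $\mathrm{Perf}(\cA_\bL)$ generated by $\cF^{(\bL,\bb)}(\bL,\bb)\simeq\cA_\bL$, which is all of $\mathrm{Perf}(\cA_\bL)$. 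Throughout we work in the $\Z$-graded setting: since $\bL$ is a graded Lagrangian immersion in the plumbing of $T^*\bS^2$ and $D$ is non-Dynkin, the potential $W$ (defined by $m_0^{\bb}=W\cdot\one_\bL$) vanishes, so that $m_0^{\bb}$ is purely the sum, one per vertex, of the preprojective relations and $\cF^{(\bL,\bb)}$ takes values in complexes of $\cA_\bL$-modules rather than matrix factorizations.

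For (A): by the theorem of \cite{HLT24} quoted above, $\cA_\bL$ is the preprojective algebra of $D$, which for non-Dynkin $D$ is a $2$-Calabi--Yau algebra; its canonical bimodule resolution is the truncation, at the relations (indexed by the vertices), of the resolution of an admissible path algebra recalled in the proof of Proposition~\ref{prop: Hom-Ext}, namely the exact palindromic complex $0\to\bigoplus_i P_{ii}\xrightarrow{g}\bigoplus_a P_{h(a)t(a)}\xrightarrow{f}\bigoplus_i P_{ii}\to\cA_\bL\to 0$. On the Floer side the Morse model gives $\CF^0((\bL,\bb),(\bL,\bb))=\bigoplus_i P_{ii}$ (spanned by the maxima $M_i$), $\CF^1=\bigoplus_a P_{h(a)t(a)}$ (spanned by the immersed sectors $X_a$) and $\CF^2=\bigoplus_i P_{ii}$ (spanned by the minima $P_i$), which matches this shape term by term. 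One then identifies the differentials: the unital-element computation of Proposition~\ref{prop: Hom-Ext} shows that $m_1^{\bb,\bb}$ on the degree-zero part is the map $f$, while the identity $m_1^{\bb,\bb}(\eta X_a)=\iota^{\bb,\bb}_\eta\,\partial_{x_a}m_0^{\bb}$ together with the involution-induced change of coordinates of \cite{HKL23,HLT24} (which replaces the corrected obstruction by the honest preprojective relation at each vertex) shows that $m_1^{\bb,\bb}$ on $\CF^1$ is the map $g$. Hence the complex of bimodules $\cF^{(\bL,\bb)}(\bL,\bb)$ is the stated resolution; its exactness off one degree is precisely the known fact that this complex resolves the $2$-Calabi--Yau algebra $\cA_\bL$, and the self-duality is the palindromic symmetry of this resolution (equivalently, Poincar\'e duality on the spheres together with the symmetry of the two differentials). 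Restricting the left action, $\cF^{(\bL,\bb)}(\bL,\bb)$ becomes a bounded complex of finitely generated projective left $\cA_\bL$-modules quasi-isomorphic, as a left module, to $\cA_\bL$, hence a perfect $\cA_\bL$-module.

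For (B) and (C): since $\cF^{(\bL,\bb)}(\bL,\bb)$ is a bounded complex of projective left $\cA_\bL$-modules and is quasi-isomorphic, as a left module, to $\cA_\bL$, for any $U$ one has
$$\Hom_{\cA_\bL}\bigl(\cF^{(\bL,\bb)}(\bL,\bb),\cF^{(\bL,\bb)}(U)\bigr)\;\simeq\;\R\Hom_{\cA_\bL}\bigl(\cA_\bL,\cF^{(\bL,\bb)}(U)\bigr)\;=\;\cF^{(\bL,\bb)}(U)\;=\;\CF^*((\bL,\bb),U).$$
It remains to check that the chain map produced by $\cF^{(\bL,\bb)}$ realizes this identification. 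That map sends $x\in\CF^*((\bL,\bb),U)$ to the module homomorphism $\cF^{(\bL,\bb)}(\bL,\bb)\to\cF^{(\bL,\bb)}(U)$ given by the $A_\infty$ operations $m_k^{\bb,\dots,\bb,\,-,\,x}$ (``composition with $x$''); evaluated on the degree-zero generators $M_i$ it gives $m_2^{\bb,\bb,-}(M_i,x)=e_i\cdot x$ by unitality, which corresponds to $1\mapsto x$ under the explicit quasi-isomorphism $\cF^{(\bL,\bb)}(\bL,\bb)\to\cA_\bL$ sending $M_i\mapsto e_i$ and $X_a,P_i\mapsto 0$. Thus $\cF^{(\bL,\bb)}$ is a quasi-isomorphism on all Hom-complexes with source $(\bL,\bb)$. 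The class of objects $X$ for which $\cF^{(\bL,\bb)}$ is a quasi-isomorphism on $\CF^*(X,U)$ for every $U$ is closed under shifts, cones and direct summands and contains $(\bL,\bb)$, hence is all of $\Fuk^{sub}(M)$; so $\cF^{(\bL,\bb)}$ is quasi-fully-faithful. Finally $\cF^{(\bL,\bb)}$ is an exact functor with $\cF^{(\bL,\bb)}(\bL,\bb)\simeq\cA_\bL$, so its essential image is the idempotent-closed pre-triangulated subcategory generated by $\cA_\bL$, i.e.\ $\mathrm{Perf}(\cA_\bL)$; together with quasi-full-faithfulness this yields the quasi-equivalence.

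I expect the main obstacle to be step (A): verifying that the Floer differential $m_1^{\bb,\bb}$ and the obstruction $m_0^{\bb}$ reproduce the maps $f$, $g$ and the preprojective relations after the change of coordinates, and, crucially, that the resulting two-term complex is \emph{exact} (i.e.\ that $\cA_\bL$ has global dimension two with this resolution). The first point relies on the holomorphic-polygon count on the plumbing, where all contributing polygons are constant at the immersed points because $\bL$ is exact with constant primitives, together with the local anti-symplectic involution symmetry of \cite{HKL23} that forces the higher corrections to cancel into the honest preprojective relation. The second point is exactly where the non-Dynkin hypothesis is essential: for Dynkin $D$ the algebra $\cA_\bL$ is finite-dimensional and self-injective, the same two-term complex fails to resolve $\cA_\bL$ (and $W$ need not vanish), and neither the bimodule-resolution statement nor the quasi-equivalence holds.
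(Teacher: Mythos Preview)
Your proposal is correct and follows essentially the same approach as the paper: identify the Floer complex $\cF^{(\bL,\bb)}(\bL,\bb)$ with the known self-dual projective bimodule resolution of the preprojective algebra (the paper cites \cite{CBK22} and matches the two differentials explicitly to $g^*$ and $f^*$ there), then deduce the Hom-level quasi-isomorphism from $\R\Hom_{\cA_\bL}(\cA_\bL,-)=\mathrm{id}$, verified against the functor via unitality $m_2^{\bb,\bb,0}(1_\bL,\alpha)=\alpha$. The only cosmetic differences are that the paper packages your step (B) as the degenerating spectral sequence of the double complex $K^{i,j}=\Hom(A^{-i},B^j)$, and does not spell out essential surjectivity as you do in (C).
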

\begin{proof}
	For simplicity, we will omit the sign, which depends on the choice of the spin structure and orientations of $\bL$.
	
	As in Equation \ref{eq:monad}, up to coordinate-change, the Morse model produces the following complex: \begin{equation}\label{eq:resolution}
		(\cF^{(\bL,\bb)}(\bL,\bb), m_1^{\bb,\bb}):
		L((\bL,\bb),(\bL,\bb))  \xrightarrow{\sigma} E((\bL,\bb),(\bL,\bb))  \xrightarrow{\tau} L((\bL,\bb),(\bL,\bb)).
	\end{equation} In the above, $\sigma(\xi_v)=m_1^{\bb,\bb}(\xi_v M_v)=\sum_{ h(a)=v} \left( m_2(x_aX_a,\xi_v M_v) + m_2(\xi_v M_v,x_{\bar{a}}X_{\bar{a}}) \right) =\bigoplus_{h(a)=v} \left(  \xi_v x_a \oplus x_{\bar{a}}\xi_v \right) $, since $M_v$ is the fundamental class of $S^2_v$. Besides, $\tau(\eta_a)= m_1^{\bb,\bb}(\eta_a X_a)= x_{\bar{a}} \eta_a \oplus \eta_a x_{\bar{a}}$. In particular, $\sigma$ corresponds to pre-compose (resp. post-compose) with the arrows whose heads (resp. tail) are at the vertex $v$. 

    Firstly, we show that $\cF^{(\bL,\bb)}(\bL,\bb)$ gives a projective bimodule resolution of $\cA_\bL$, by identifying it with the complex introduced in Proposition 2.5 in \cite{CBK22}. The key step is to identify the differential. 
    
    Let's take $P_0=\oplus_i \cA_\bL e_i \otimes e_i \cA_\bL$ and $P_1= \oplus_a \cA_\bL e_{h(a)}\otimes e_{t(a)}\cA_\bL.$ Recall that in Theorem 2.7 of \cite{CBK22}, the author proved that the following complex is a projective resolution of $\cA_\bL$ as an $\cA_\bL$ bimodule if $D$ is a non-Dynkin diagram:
    $$0 \to \Hom_{\cA_\bL-\cA_\bL}(P_0,\cA_\bL \otimes \cA_\bL) \xrightarrow{-g^*} \Hom_{\cA_\bL-\cA_\bL}(P_1,\cA_\bL \otimes \cA_\bL) \xrightarrow{f^*} \Hom_{\cA_\bL-\cA_\bL}(P_0,\cA_\bL \otimes \cA_\bL) \to 0,$$ where $f(e_i \otimes e_i)= \sum_{h(a)=i} (e_{h(a)} \otimes \bar{a} + a \otimes e_{h(a)})$ and $g(e_{h(a)} \otimes e_{t(a)})=a \otimes e_{t(a)} -e_{h(a)}\otimes a$.

    Let $\varphi_{e_i \otimes e_i} \in \Hom_{\cA_\bL-\cA_\bL}(P_0, \cA_\bL \otimes \cA_\bL)$ be the morphism corresponding to $e_i \otimes e_i \in e_i \cA_\bL \otimes \cA_\bL e_i$. Then $g^*\varphi_{e_i \otimes e_i} (e_{h(a)}\otimes e_{t(a)})= \varphi_{e_i \otimes e_i}(g(e_{h(a)}\otimes e_{t(a)}))=\varphi_{e_i \otimes e_i}(a\otimes e_{t(a)}-e_{h(a)}\otimes a)=ae_i \otimes e_i e_{t(a)}- e_{h(a)}e_i \otimes e_i a$, where $e_i e_j = \delta_{ij}$. In particular, if $t(a)=i$ and $h(a)\neq i$, we have $g^*\varphi_{e_i \otimes e_i} (e_{h(a)}\otimes e_{t(a)})=a \otimes e_i$. If $h(a)=i$ and $t(a)\neq i$, we have $g^*\varphi_{e_i \otimes e_i} (e_{h(a)}\otimes e_{t(a)})=- e_i \otimes a$. Hence, $g^*$ is pre-composing (resp. post-composing)  the arrows heading (resp. tail) at $i$-th vertex, which coincides with $\sigma$. 
    
    Similarly, we can show $f^*$ is the same as $\tau$. Let $\phi_{x}$ be the morphism in $\Hom_{\cA_\bL-\cA_\bL}(P_1, \cA_\bL \otimes \cA_\bL)$ corresponding to $e_{h(x)} \otimes e_{t(x)} \in \bigoplus_{a \in Q^\fr} e_{h(a)} \cA_\bL \otimes \cA_\bL e_{t(a)}$. In particular, the restriction of $\phi_x$ to the component $ \cA_\bL e_{h(a)} \otimes e_{t(a)}  \cA_\bL$ equals zero if $x \neq a$. As before, we can compute $f^*\phi_x(e_i \otimes e_i)=\phi_x( \sum_{h(a)=i} (e_h(a) \otimes \bar{a}+ a \otimes e_{h(a)}) )= e_ie_{h(x)} \otimes \bar{x} + \bar{x} \otimes e_i e_{t(x)}, $ which coincides with $\tau$. Therefore, $\cF^{(\bL,\bb)}(\bL,\bb)$ is a projective resolution of $\cA_\bL$ by Proposition 2.4 and Theorem 2.7 in \cite{CBK22}.
    
    In the end, we will show $$\cF^{(\bL,\bb)}:\CF^*((\bL,\bb),U):=\cA_\bL \otimes_{\mathbb{K}}\CF^*(\bL,U) \to \Hom_{\cA_\bL}(\cF^{(\bL,\bb)}(\bL,\bb), \cF^{(\bL,\bb)}(U))$$ is a quasi-isomorphism, where $\mathbb{K}$ is the semisimple ring formed by the trivial paths. To do so, we use the spectral sequence of the double complex to compute $H^k(\Hom_{\cA_\bL}(\cF^{(\bL,\bb)}(\bL,\bb), \cF^{(\bL,\bb)}(U))).$ For simplicity, let's denote $\cF^{(\bL,\bb)}(\bL,\bb)$ by $A^\bullet$ and $\cF^{(\bL,\bb)}(U)$ by $B^\bullet$. Notice that, the complex $\Hom_{\cA_\bL}^{\bullet}(A^\bullet, B^\bullet)$ is the total complex of the double complex $K^{i,j}:=\Hom(A^{-i},B^j)$ endowed with the two differentials $d_I= \pm d_A$ and $d_{II}=d_B$. In addition, there exists a spectral sequence
    $$E^{p,q}_2:= H^p_{II}H^q_I(K^{\bullet,\bullet}) \Rightarrow H^{p+q}(K^{\bullet,\bullet}).$$
    
    Notice that $E^{p,q}_2:= H^p_{II}H^q_I(K^{\bullet,\bullet}) = 0$ if $q \neq 0.$ If $q=0$, $$E_2^{p,0}=H^p (\Hom_{\cA_\bL}(H^0(A^\bullet), B^\bullet))=H^p(\Hom_{\cA_\bL}(\cA_\bL, B^\bullet))=H^p(\cF^{(\bL,\bb)}(U)).$$ Therefore, we know $$H^p(\cF^{(\bL,\bb)}(U)) \cong H^p (\Hom_{\cA_\bL}(\cF^{(\bL,\bb)}(\bL,\bb), \cF^{(\bL,\bb)}(U))).$$ Moreover, this can be induced by $\cF^{(\bL,\bb)}: \alpha \mapsto m_2^{\bb,\bb,0}(-,\alpha),$ since $m_2^{\bb,\bb,0}(1_{\bL},\alpha)= \alpha$ for $\forall \alpha \in \CF^*((\bL,\bb),U)$. Hence, the first statement holds.
    
    In particular, to obtain the fully-faithfulness $\cF^{(\bL,\bb)}$, it suffices to consider the split generators by Theorem 4.2 in \cite{CHL17}. Therefore, as $\cA_{\bL}$ is quasi-isomorphic to the $\cF^{(\bL,\bb)}(\bL,\bb)=\CF^*((\bL,\bb),(\bL,\bb))$, the last statement follows by taking $U=(\bL,\bb).$
\end{proof}

As a consequence, we have the following mirror symmetry correspondence of Lemma \ref{lem: stab}, the quiver stability condition.

\begin{cor}\label{cor: T}
	For the ADE surfaces, $\mathrm{Ext}^*(\mathcal{T},\cE[-1]) \cong \HF^*((\bL,\bb),(\bL^\fr,b_0)),$ where $\mathcal{T}$ is the tilting bundle and $\cE$ is the torsion-free sheaf over $\widetilde{\C^2/\Gamma}$. In particular, $\mathrm{Ext}^1(\mathcal{T},\cE)=0$ and $\mathrm{Ext}^2(\mathcal{T},\cE)=0$.
\end{cor}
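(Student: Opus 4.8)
The plan is to read off the isomorphism from the local homological mirror symmetry of Theorem~\ref{thm:qiso}, the monadic description of the mirror image of a framed Lagrangian brane, and the derived McKay correspondence, and then to deduce the vanishing from the stability criterion of Lemma~\ref{lem: stab}. Since an affine ADE diagram is non-Dynkin, Theorem~\ref{thm:qiso} applies: $\cF^{(\bL,\bb)}$ is fully faithful, $\cA_\bL$ is the preprojective algebra, and $\cF^{(\bL,\bb)}(\bL,\bb)$ is a projective $\cA_\bL$-bimodule resolution of $\cA_\bL$; as a complex of left modules it is quasi-isomorphic to $\cA_\bL$. Hence, for any object $U$ of $\Fuk^{sub}(M)$,
\[
\HF^*\big((\bL,\bb),U\big)\ \cong\ H^*\Hom_{\cA_\bL}\big(\cF^{(\bL,\bb)}(\bL,\bb),\,\cF^{(\bL,\bb)}(U)\big)\ \cong\ H^*\big(\cF^{(\bL,\bb)}(U)\big),
\]
the last step because $\Hom_{\cA_\bL}\big(\cF^{(\bL,\bb)}(\bL,\bb),-\big)$ computes $\R\Hom_{\cA_\bL}(\cA_\bL,-)=\mathrm{id}$. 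I would apply this with $U=(\bL^\fr,b_0)$, after noting that this framed brane lies in $\Fuk^{sub}(M)$ since the cotangent fibers entering its surgery construction are split-generated by the zero sections in a plumbing of $T^*\bS^2$.

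The second step is to identify $\cF^{(\bL,\bb)}(\bL^\fr,b_0)$ inside $D^b(\mathrm{mod}-\cA_\bL)$. By \cite[Thm.~5.13]{HLT24} it is the monadic complex of three $\cA_\bL$-modules in degrees $0,1,2$ whose cohomology is concentrated in degree $1$ and equals the $\cA_\bL$-module underlying the torsion-free sheaf $\cE$ on $\widetilde{\C^2/\Gamma}$. The derived McKay equivalence $\R\Hom(\mathcal{T},-)\colon D^b\mathrm{Coh}(\widetilde{\C^2/\Gamma})\xrightarrow{\sim}D^b(\mathrm{mod}-\cA_\bL)$ of \cite{KV00,BKR01}, together with the identification $\End(\mathcal{T})\cong\cA_\bL$ with the preprojective algebra, exhibits this monad as a representative of $\R\Hom(\mathcal{T},\cE[-1])$, the shift $[-1]$ recording that the cohomology sits in degree $1$. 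Transporting $\mathrm{Ext}$-groups across the equivalence gives $\mathrm{Ext}^k(\mathcal{T},\cE[-1])=\mathrm{Ext}^k_{\cA_\bL}\big(\cA_\bL,\R\Hom(\mathcal{T},\cE[-1])\big)=H^k\big(\cF^{(\bL,\bb)}(\bL^\fr,b_0)\big)$, which combined with the display above yields $\HF^k\big((\bL,\bb),(\bL^\fr,b_0)\big)\cong\mathrm{Ext}^k(\mathcal{T},\cE[-1])$ for every $k$.

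For the last assertions, since $b_0$ represents a point of the Nakajima quiver variety it is a stable representation, so Lemma~\ref{lem: stab}(3) in its universal-brane form (the Remark following that lemma) gives $\HF^0\big((\bL,\bb),(\bL^\fr,b_0)\big)=\HF^2\big((\bL,\bb),(\bL^\fr,b_0)\big)=0$. Using the identity $\mathrm{Ext}^k(\mathcal{T},\cE)=\mathrm{Ext}^{k+1}(\mathcal{T},\cE[-1])$ together with the isomorphism above, this reads $\mathrm{Ext}^1(\mathcal{T},\cE)=\HF^2=0$; moreover $\mathrm{Ext}^2(\mathcal{T},\cE)=\HF^3\big((\bL,\bb),(\bL^\fr,b_0)\big)=0$ because the monadic, hence Floer, complex is concentrated in degrees $0,1,2$. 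I expect the main obstacle to be the second step: fixing the grading conventions so that the three terms of the monad line up with the $\mathrm{Ext}$-degrees on the coherent side, and confirming that the endomorphism algebra of the tilting bundle is literally the algebra $\cA_\bL$ produced by the Floer construction and not merely a Morita-equivalent one. Once this bookkeeping is settled, the corollary follows formally from Theorem~\ref{thm:qiso}, Lemma~\ref{lem: stab}, and the McKay equivalence.
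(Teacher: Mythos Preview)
Your proof is correct and follows essentially the same route as the paper: apply Theorem~\ref{thm:qiso} with $U=(\bL^\fr,b_0)$, identify the mirror images via the monadic complex and the derived McKay correspondence so that $\cF^{(\bL,\bb)}(\bL^\fr,b_0)\simeq\cE[-1]$ and $\cF^{(\bL,\bb)}(\bL,\bb)\simeq\mathcal{T}$, and then read off the vanishing from Lemma~\ref{lem: stab} (in its universal-brane form). Your additional remarks on split-generation of $(\bL^\fr,b_0)$ and on the degree bookkeeping are more explicit than the paper's, but the argument is the same.
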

\begin{proof}
	By Theorem \ref{thm:qiso}, we know $\cF^{(\bL,\bb)}$ induces a quasi-isomorphism between $\CF^*((\bL,\bb),U)$ and $\Hom_{\cA_\bL}(\cF^{(\bL,\bb)}(\bL,\bb), \cF^{(\bL,\bb)}(U))$ for affine ADE Dynkin diagram and all Lagrangians $U$. In particular, we can take $U=(\bL^\fr,b_0)$. Under the derived McKay correspondence, $\cF^{(\bL,\bb)}(\bL,\bb)$ corresponds to the tilting bundle $\mathcal{T}$, while $\cF^{(\bL,\bb)}(\bL^\fr,b_0)$ is quasi-isomorphic to the torsion-free sheaf $\cE[-1]$. Hence, we have $$\mathrm{Ext}^*(\mathcal{T},\cE[-1]) \cong \HF^*((\bL,\bb),(\bL^\fr,b_0)).$$
	
	Notice that only $\HF^1((\bL,\bb),(\bL^\fr,b_0))\neq 0$ if $b_0$ is a stable representation of $\cA_\bL^\fr$, see for example Proposition 4.1 in \cite{Nak07} or Lemma \ref{lem: stab}. Hence, the statement follows. 
\end{proof}

As one may observe, the proof of the fully-faithfulness of $\cF^{(\bL,\bb)}$ only relies on the fact that $\cF^{(\bL,\bb)}(\bL,\bb)$ provides a projective bimodule resolution of $\cA_\bL$. This observation immediately leads to the following corollary:

\begin{cor}\label{cor: res}
	Let $\bL$ be a Lagrangian immersion with formal deformation space $\cA_{\bL}$. If $\cF^{(\bL,\bb)}(\bL,\bb)$ gives a projective bimodule resolution of $\cA_\bL$, then the localized mirror functor $$\cF^{(\bL,\bb)}:\Fuk^{sub}(M) \to \mathrm{Perf}(\cA_\bL)$$ is a quasi-equivalence, where $\Fuk^{sub}(M)$ denotes the Fukaya subcategory split generated by $(\bL,\bb)$.
\end{cor}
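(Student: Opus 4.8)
The plan is to observe that the proof of Theorem~\ref{thm:qiso} invoked the non-Dynkin hypothesis on $D$ at exactly one point: to guarantee, via \cite{CBK22}, that $\cF^{(\bL,\bb)}(\bL,\bb)$ is a projective $\cA_\bL$-bimodule resolution of $\cA_\bL$. Once this is taken as a hypothesis, the rest of that argument goes through verbatim, so I would simply reorganize it. Concretely, I would first record the hypothesis in the usable form: $A^\bullet := \cF^{(\bL,\bb)}(\bL,\bb)$ is a bounded complex of projective $\cA_\bL$-bimodules with $H^0(A^\bullet)\cong \cA_\bL$ and $H^{k}(A^\bullet)=0$ for $k\neq 0$.

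Next I would reduce the statement to a quasi-isomorphism on morphism complexes. For a Lagrangian $U$ set $B^\bullet := \cF^{(\bL,\bb)}(U)$ and form the double complex $K^{i,j} := \Hom_{\cA_\bL}(A^{-i},B^{j})$ with differentials $d_I=\pm d_{A^\bullet}$, $d_{II}=d_{B^\bullet}$, whose total complex is $\Hom_{\cA_\bL}^\bullet(A^\bullet,B^\bullet)$. Since $A^\bullet$ is bounded, the column filtration produces a convergent spectral sequence $E_2^{p,q}=H^p_{II}H^q_I(K^{\bullet,\bullet})\Rightarrow H^{p+q}(\Hom^\bullet_{\cA_\bL}(A^\bullet,B^\bullet))$; because $A^\bullet\twoheadrightarrow\cA_\bL$ is a projective resolution, $H^q_I=0$ for $q\neq 0$ and $H^0_I(K^{\bullet,j})=\Hom_{\cA_\bL}(\cA_\bL,B^j)\cong B^j$, so the sequence degenerates and $H^p(\Hom^\bullet_{\cA_\bL}(A^\bullet,B^\bullet))\cong H^p(\cF^{(\bL,\bb)}(U))$. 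As in the proof of Theorem~\ref{thm:qiso}, I would check that this isomorphism is the one induced by the $A_\infty$-functor $\cF^{(\bL,\bb)}\colon \alpha\mapsto m_2^{\bb,\bb,0}(-,\alpha)$, using $m_2^{\bb,\bb,0}(1_{\bL},\alpha)=\alpha$. Hence $\cF^{(\bL,\bb)}\colon \CF^*((\bL,\bb),U)\to \Hom_{\cA_\bL}(\cF^{(\bL,\bb)}(\bL,\bb),\cF^{(\bL,\bb)}(U))$ is a quasi-isomorphism for all $U$.

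Taking $U=(\bL,\bb)$ then gives cohomological full-faithfulness of $\cF^{(\bL,\bb)}$ on the object $(\bL,\bb)$, and by the split-generation criterion (Theorem~4.2 in \cite{CHL17}) this propagates to all of $\Fuk^{sub}(M)$, the subcategory split generated by $(\bL,\bb)$. For essential surjectivity I would note that $\mathrm{Perf}(\cA_\bL)$ is by definition split generated by the free module $\cA_\bL\simeq \cF^{(\bL,\bb)}(\bL,\bb)$, so the essential image of the cohomologically full and faithful functor $\cF^{(\bL,\bb)}$ is a thick (triangulated, idempotent-complete) subcategory containing $\cF^{(\bL,\bb)}(\bL,\bb)$, hence all of $\mathrm{Perf}(\cA_\bL)$. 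That $\cF^{(\bL,\bb)}$ actually lands in $\mathrm{Perf}(\cA_\bL)$ and not merely in $D(\mathrm{Mod}\,\cA_\bL)$ is automatic, since every object of $\Fuk^{sub}(M)$ is obtained from $(\bL,\bb)$ by finitely many cones and retracts, so its image is built the same way from the perfect complex $\cF^{(\bL,\bb)}(\bL,\bb)$. Combining the two statements yields the claimed quasi-equivalence.

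I do not expect a genuine obstacle here, since the content is a repackaging of Theorem~\ref{thm:qiso}; the points needing care are purely bookkeeping: convergence of the column spectral sequence (immediate because $A^\bullet$ is a \emph{bounded} complex of projectives, so the double complex has finitely many nonzero rows), and confirming that the degeneration isomorphism coincides with the map induced by $\cF^{(\bL,\bb)}$ rather than an abstract one—handled exactly as in Theorem~\ref{thm:qiso} by tracking the unit $1_{\bL}$ through the $m_2$-composition.
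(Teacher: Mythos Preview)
Your proposal is correct and follows exactly the paper's approach: the paper states the corollary immediately after observing that the proof of Theorem~\ref{thm:qiso} used the non-Dynkin hypothesis only to establish (via \cite{CBK22}) that $\cF^{(\bL,\bb)}(\bL,\bb)$ is a projective bimodule resolution, so taking this as a hypothesis the remaining spectral-sequence and split-generation arguments go through unchanged. Your write-up is in fact more detailed than the paper, which simply records the observation and states the corollary without further proof.
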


It would be interesting to extend this local equivalence to global mirrors.



Moreover, the preceding statements appear to be closely intertwined with the theory of bimodule resolutions of Calabi-Yau algebras, when the immersed Lagrangian $\bL$ is compact. 
To make this connection precise, we recall the definitions of homological smoothness and the Calabi–Yau property for associative algebras. See for instance \cite{Gin07,Boc08,VdB15,LamPhD} for the theory of Calabi-Yau algebras.
\begin{defn}
	An associative algebra $A$ is homologically smooth if $A$ is a perfect $A$-bimodule. In other words, $A$, viewed as a bimodule over itself, admits a finite-length resolution by finitely generated projective bimodules.
	
	An associative algebra $A$ is Calabi-Yau of dimension $d$ if $A$ is homologically smooth and there exists an isomorphism $\eta: \mathbb{R}\Hom_{A\otimes A^{op}}(A,A\otimes A^{op}) \to A[-d]$ in the derived category of $A$-bimodule.
\end{defn}

In general, if we takes degree-one boundary deformations in place of flat connections, we have the following proposition concerning the Calabi-Yauness of the formal deformation space $\cA_\bL$ and the localized mirror functor.

\begin{lem}
	Let $\bL$ be a compact, relatively spin, oriented immersed Lagrangian in a symplectic manifold $M$. If $\cF^{(\bL,\bb)}(\bL,\bb)$ provides a projective resolution of $\cA_\bL$ as an $\cA_\bL$-bimodule, then $\cA_\bL$ is a Calabi–Yau algebra.
	
	Moreover, the localized mirror functor $\cF^{(\bL,\bb)}: \CF^*((\bL,\bb),U)  \to \Hom_{\cA_\bL}(\cF^{(\bL,\bb)}(\bL,\bb), \cF^{(\bL,\bb)}(U))$ induces a quasi-isomorphism. When restricted to the subcategory split-generated by $(\bL,\bb)$, the localized mirror functor $\cF^{(\bL,\bb)}$ induces a quasi-equivalence.
\end{lem}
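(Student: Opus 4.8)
The statement has two pieces: the Calabi-Yau property of $\cA_\bL$, and the quasi-isomorphism (hence quasi-equivalence) for the localized mirror functor. The second piece is a word-for-word repetition of the final part of the proof of Theorem~\ref{thm:qiso}, packaged as Corollary~\ref{cor: res}: that argument uses only the hypothesis that $P_\bullet := \cF^{(\bL,\bb)}(\bL,\bb) = \CF^*((\bL,\bb),(\bL,\bb))$ is a projective $\cA_\bL$-bimodule resolution of $\cA_\bL$, never the preprojective structure. So the plan is to record that part briefly and to concentrate on the Calabi-Yau claim. I would first dispose of homological smoothness: because $\bL$ is compact, the Morse model for $\CF((\bL,\bb),(\bL,\bb))$ is built from finitely many generators — the critical points of the Morse functions on the components of $\hat{\bL}$ and on the self clean intersections $S_a$ — so each term of $P_\bullet$ is a finite sum of the standard projective bimodules $\cA_\bL e_i \otimes e_j \cA_\bL$, and $P_\bullet$ is a bounded complex of finitely generated projective bimodules. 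Since $P_\bullet \simeq \cA_\bL$ by hypothesis, $\cA_\bL$ is a perfect bimodule over itself, i.e. homologically smooth.

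For the Calabi-Yau isomorphism I would use Poincar\'e-Lefschetz duality on the Floer complex. For a compact, oriented, relatively spin Lagrangian immersion, $\CF^*(\bL,\bL)$ carries a non-degenerate pairing of degree $n := \dim_\R \bL$ \cite{FOOO09,AJ10} which, in the Morse model, pairs a degree-$k$ critical point of a component with the degree-$(n-k)$ critical point of that component and pairs the immersed generator $X_a$ with $X_{\bar{a}}$ across the two local branches at the node $S_a$ (with the sign determined by the relative spin structure); after passing to a cyclically symmetric model \cite{Fuk10,FOOO-involution} it satisfies the cyclic identity $\langle m_N(x_1,\ldots,x_N),x_{N+1}\rangle = \pm\langle m_N(x_2,\ldots,x_{N+1}),x_1\rangle$, so in particular $\langle m_1^{\bb,\bb}(x),x'\rangle=\pm\langle x,m_1^{\bb,\bb}(x')\rangle$. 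Extending coefficients over $\cA_\bL$ as in Section~\ref{section:nc mirror}, this pairing identifies the summand of $P_\bullet$ indexed by a generator $p$ with the bimodule dual of the summand indexed by its Poincar\'e-dual generator, and the last identity shows the identification intertwines the differential of $P_\bullet$ with that of $\Hom_{\cA_\bL-\cA_\bL}(P_\bullet,\cA_\bL\otimes\cA_\bL)$ up to the shift $[-n]$. Hence $\Hom_{\cA_\bL-\cA_\bL}(P_\bullet,\cA_\bL\otimes\cA_\bL)\simeq P_\bullet[-n]$, and since $P_\bullet$ is a projective resolution of $\cA_\bL$ the left side is a model for $\mathbb{R}\Hom_{\cA_\bL\otimes\cA_\bL^{\op}}(\cA_\bL,\cA_\bL\otimes\cA_\bL^{\op})$ while the right side is quasi-isomorphic to $\cA_\bL[-n]$; combined with homological smoothness this is precisely the statement that $\cA_\bL$ is Calabi-Yau of dimension $n$.

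For the functor I would then rerun the spectral-sequence argument of Theorem~\ref{thm:qiso}. With $A^\bullet = P_\bullet$ and $B^\bullet = \cF^{(\bL,\bb)}(U)$, the double complex $K^{i,j}=\Hom_{\cA_\bL}(A^{-i},B^j)$ has total complex $\Hom_{\cA_\bL}(A^\bullet,B^\bullet)$; taking cohomology first in the $A$-direction and using that $A^\bullet$ is a projective resolution of $\cA_\bL$ kills all rows but $q=0$, where $E^{p,0}_2=H^p(\Hom_{\cA_\bL}(\cA_\bL,B^\bullet))=H^p(\cF^{(\bL,\bb)}(U))$. The resulting isomorphism is realized by $\cF^{(\bL,\bb)}: \alpha\mapsto m_2^{\bb,\bb,0}(-,\alpha)$, since $m_2^{\bb,\bb,0}(1_{\bL},\alpha)=\alpha$; this gives the asserted quasi-isomorphism $\CF^*((\bL,\bb),U)\to\Hom_{\cA_\bL}(\cF^{(\bL,\bb)}(\bL,\bb),\cF^{(\bL,\bb)}(U))$ for every $U$. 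Finally, $(\bL,\bb)$ split-generates $\Fuk^{sub}(M)$ and $\cF^{(\bL,\bb)}(\bL,\bb)\simeq\cA_\bL$ split-generates $\mathrm{Perf}(\cA_\bL)$, so full-faithfulness on the generator propagates to the split-closure by Theorem~4.2 of \cite{CHL17}, yielding the quasi-equivalence.

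The hard part is the second paragraph: verifying that the Floer Poincar\'e pairing transports to an honest chain map of bimodule complexes with the correct degree shift, that the relative-spin signs at the immersed sectors (where duality interchanges the two local branches at a node) work out, and that the resulting map $\Hom_{\cA_\bL-\cA_\bL}(P_\bullet,\cA_\bL\otimes\cA_\bL)\to P_\bullet[-n]$ is a quasi-isomorphism rather than merely a map — equivalently, that the cyclic $A_\infty$-structure on $\CF^*(\bL,\bL)$ descends to a Frobenius-type self-duality of $\cA_\bL$ compatible with the quiver-algebra extension of Section~\ref{section:nc mirror}. Once this self-duality of $P_\bullet$ is established, the homological-smoothness and spectral-sequence steps are formal.
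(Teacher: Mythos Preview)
Your proposal is correct and follows essentially the same route as the paper: both use Fukaya's cyclic structure on $\CF^*(\bL,\bL)$ \cite{Fuk10} to conclude that $\cF^{(\bL,\bb)}(\bL,\bb)$ is a self-dual complex of bimodules up to shift by $n$, hence Calabi--Yau once it is assumed to be a projective resolution, and both invoke the spectral-sequence argument from Theorem~\ref{thm:qiso} verbatim for the quasi-isomorphism. Your treatment is in fact more careful than the paper's (you spell out homological smoothness from compactness and finiteness of the Morse model, and you flag the sign/branch issues at the immersed sectors), whereas the paper records the cyclic identity and passes directly to ``finite-length, self-dual complex'' in one line; your worry in the last paragraph about whether the self-duality is an honest chain map is legitimate but is exactly what the cyclic identity $\langle m_1^{\bb,\bb}(x),x'\rangle=\pm\langle x,m_1^{\bb,\bb}(x')\rangle$ guarantees.
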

\begin{proof}
	Given a compact, relatively spin, oriented Lagrangian $\bL$ of dimensional $n$, Fukaya \cite{Fuk10} showed that the $A_\infty$-algebra $\CF^*(\bL,\bL)$ has a cyclic structure. In other words, $$\langle m_k(v_1,\ldots, v_k), v_{k+1} \rangle=  (-1)^{\mid v_1 \mid'(\mid v_2 \mid'+ \ldots +\mid v_{k+1} \mid')} \langle m_k(v_2,\ldots, v_{k+1}), v_{1} \rangle,$$ where $\langle \cdot ,\cdot \rangle$ is a graded commutative intersection paring on $\CF^*(\bL,\bL)$, $|v_1|'= \mathrm{deg}(v)-1$ and any $v_1,\ldots, v_{k+1}\in C^*(\bL \times_\iota \bL)$. Therefore, $\cF^{(\bL,\bb)}(\bL,\bb)$ gives a finite-length, self-dual complex of $\cA_\bL$-bimodule up to shift. If it is a projective resolution of $\cA_\bL$ as an $\cA_\bL$-bimodule, then $\cA_\bL$ is a $n-$Calabi-Yau algebra.
	
	For the second statement, one can argue as in the proof of Theorem \ref{thm:qiso}. The claim follows from the Spectral sequence's computation. In particular, if we restrict to $\cF^{(\bL,\bb)}$ the subcategory split generated by $(\bL,\bb)$, the localized mirror functor induces a quasi-equivalence $\cF^{(\bL,\bb)}:\Fuk^{sub}(M) \to \mathrm{Perf}(\cA_\bL)$.
\end{proof}

\begin{rem}
	If the formal deformation space $\cA_\bL$ is the coordinate ring of a nonsingular affine variety, the projective bimodule resolution $\cF^{(\bL,\bb)}(\bL,\bb)$ of $\cA_\bL$ provides the resolution of the diagonal. In particular, this occurs when $D$ is the affine $A_0$ graph. In this case, $\bL$ is the pinched torus, and $\cA_\bL \cong \C[x,y],$ see \cite[Lem. 3.6]{HKL23}.
	
	Moreover, this result generalizes Theorem 4.9 of \cite{CHL21}, 
	which shows that the localized mirror functor possesses a certain injectivity property.
\end{rem}

Recall that in \cite{Gin07}, Ginzburg proved that a quiver with superpotential is Calabi-Yau of dimension 3 if and only if the extended cotangent complex of the path algebra, a complex of bimodules, is a resolution. This algebraic criterion admits a mirror counterpart on the symplectic side.

Indeed, for a compact Lagrangian immersion $\bL$ of dimension 3, we observe that $\cF^{(\bL,\bb)}(\bL,\bb)$ coincides with the extended cotangent complex for the noncommutative deformation space $\cA_\bL$. To prove this, let's first define the spacetime superpotential of $\bL$, see Definition 3.6 in \cite{CHL21}, using the cyclic structure \cite{Fuk10}:
\begin{defn}
	The spacetime superpotential of $(\bL,\bb)$ is defined as
	$$\Phi_\bL:= \sum_k \frac{1}{k+1} \langle m_k(\bb,\ldots,\bb),\bb \rangle.$$
\end{defn}

The following proposition is the three-dimensional counterpart of Theorem \ref{thm:qiso}, extending the local Homological Mirror Symmetry equivalence to the setting in which the noncommutative deformation algebra $\cA_\bL$ is $3$-Calabi–Yau.
\begin{prop}[Three-dimensional analogue of Theorem \ref{thm:qiso}] \label{prop: 3d}
	Let $\bL$ be a compact, relatively spin, oriented Lagrangian immersion of dimension 3 and $\cA_\bL$ be its noncommutative deformation space. Then the following are equivalent:
	\begin{enumerate}
		\item The localized mirror functor $\cF^{(\bL,\bb)}(\bL,\bb)$
		provides a projective bimodule resolution of $\cA_\bL$.
		\item The algebra $\cA_\bL$ is $3$--Calabi--Yau.
	\end{enumerate}
\end{prop}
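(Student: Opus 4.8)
The plan is to reduce the statement to Ginzburg's characterization of $3$-Calabi--Yau Jacobi algebras \cite{Gin07}, after identifying $\cF^{(\bL,\bb)}(\bL,\bb)$ with the extended cotangent complex of $\cA_\bL$. The implication $(1)\Rightarrow(2)$ I would obtain from the preceding Lemma almost verbatim: by Fukaya's cyclic structure \cite{Fuk10} the intersection pairing on $\CF^*(\bL,\bL)$ has degree $-3$, so after extending over $\cA_\bL$ as in Step~6 of Section~\ref{section:nc mirror}, $\cF^{(\bL,\bb)}(\bL,\bb)$ is a finite-length complex of finitely generated projective $\cA_\bL$-bimodules isomorphic to its own bimodule dual shifted by $[-3]$. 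If it is moreover a resolution of $\cA_\bL$, then $\cA_\bL$ is homologically smooth, and dualizing the resolution together with the self-duality yields the isomorphism $\R\Hom_{\cA_\bL\otimes\cA_\bL^{\op}}(\cA_\bL,\cA_\bL\otimes\cA_\bL^{\op})\cong\cA_\bL[-3]$, i.e.\ $\cA_\bL$ is $3$-Calabi--Yau.

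For $(2)\Rightarrow(1)$ I would proceed in three steps. First, make the quiver-with-potential structure explicit: by cyclicity $m_0^{\bb}=\partial\Phi_\bL$ is the cyclic derivative of the spacetime superpotential, so after the gauge-equivariant formal coordinate change of \cite[Lem.~3.4]{HKL23} and \cite[Thm.~5.2]{HLT24} the defining relations of $\cA_\bL$ (Step~5 of Section~\ref{section:nc mirror}) are exactly the Jacobi relations, giving $\cA_\bL\cong\Jac(Q,\Phi_\bL)$. Second, compute the differentials of $\cF^{(\bL,\bb)}(\bL,\bb)$ on the Morse generators exactly as in the proof of Theorem~\ref{thm:qiso}, now allowing three nonzero Floer degrees: the differential out of the degree-zero generators is $\xi\mapsto(\xi x_a)\oplus(x_{\bar a}\xi)$ by the unital property, the middle differential is $\eta X_a\mapsto\iota_\xi\partial_{x_a}m_0^{\bb}=\iota_\xi\partial_{x_a}\partial\Phi_\bL$ as in Proposition~\ref{prop: Hom-Ext} (the Hessian of $\Phi_\bL$), and the top differential is the bimodule dual of the first by cyclicity. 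These are precisely the maps in the extended cotangent complex of $(Q,\Phi_\bL)$, and the canonical augmentation $\cF^{(\bL,\bb)}(\bL,\bb)\to\cA_\bL$, $\alpha\mapsto m_2^{\bb,\bb,\bullet}(-,\alpha)$, sends the degree-zero fundamental classes $M_v$ to the idempotents $e_v$ and hence agrees with Ginzburg's multiplication map; so $\cF^{(\bL,\bb)}(\bL,\bb)$ is isomorphic, as an augmented complex of $\cA_\bL$-bimodules, to that complex. Third, invoke Ginzburg's theorem \cite{Gin07}: $\Jac(Q,\Phi_\bL)$ is $3$-Calabi--Yau if and only if its extended cotangent complex is a resolution of $\Jac(Q,\Phi_\bL)$; hence (2) forces (1). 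Since Ginzburg's statement is an equivalence, this argument in fact reproves $(1)\Rightarrow(2)$ as well; here Ginzburg's theorem plays the role that \cite{CBK22} played in the two-dimensional Theorem~\ref{thm:qiso}.

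The main obstacle is the explicit identification of the middle differential. A priori $\partial_{x_a}m_0^{\bb}$ packages the contributions of all the $m_k$ with bounding-cochain inputs, i.e.\ holomorphic polygons of every size, and one must check that the local anti-symplectic involution symmetry together with the unital and cyclic normalizations puts it in the form of a genuine Hessian of $\Phi_\bL$, with the coordinate change being a \emph{formal} and gauge-equivariant substitution of matrices; the sign bookkeeping from the relative spin structure and the chosen orientation of $\bL$ must also be carried through so that the self-duality of $\cF^{(\bL,\bb)}(\bL,\bb)$ is exactly the bimodule pairing underlying Ginzburg's complex rather than a mere match of ranks. As elsewhere in the paper, the identification is first obtained over the Novikov coefficients and then specialized to $\C$ using the polynomial $T$-dependence, so that Ginzburg's graded (or complete) setup applies.
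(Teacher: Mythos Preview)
Your proposal is essentially the same as the paper's proof: identify $\cF^{(\bL,\bb)}(\bL,\bb)$ with the extended cotangent complex of the Jacobi algebra $\cA_\bL=\Jac(Q,\Phi_\bL)$ and then invoke Ginzburg's equivalence (the paper cites \cite{Gin07} and \cite{Boc08}). The computation of $d_1$ via the unit property, $d_2$ as $\partial_{x_a}m_0^{\bb}$, and $d_3$ via cyclicity all match.

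One minor over-complication: you invoke the coordinate changes of \cite[Lem.~3.4]{HKL23} and \cite[Thm.~5.2]{HLT24} to put $\cA_\bL$ in Jacobi form, but those are dimension-two tools (local anti-symplectic involution at nodes of plumbed spheres) and are neither available nor needed here. In the paper's argument, the identity $m_0^{\bb}=\partial\Phi_\bL$ holds on the nose from the definition of $\Phi_\bL$ via the cyclic pairing (citing \cite{CHL21}), so $\cA_\bL$ is already a Jacobi algebra without any coordinate change. Correspondingly, the ``main obstacle'' you flag for the middle differential largely evaporates: $d_2(X_a)=\partial_{x_a}m_0^{\bb}=\partial_{x_a}\partial\Phi_\bL$ is the Hessian by construction, and the paper simply omits signs rather than tracking them. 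For $d_3$ the paper does the explicit cyclic computation $\langle m_k(\bb,\ldots,\bar X_a,\ldots,\bb),M_i\rangle=\pm\langle m_k(M_i,\bb,\ldots,\bar X_a,\ldots),\bb\rangle$, which reduces to $k=2$ by unitality and gives exactly the dual of $d_1$, as you anticipated.
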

\begin{proof}
	As shown in \cite{CHL21}, the noncommutative deformation space $\cA_\bL$ of $\bL$ is a quiver path algebra with spacetime superpotential $\Phi_\bL$. It suffices to show $\cF^{(\bL,\bb)}(\bL,\bb)$ coincides with the extended cotangent complex of $\cA_\bL$. The remaining statements follow from Theorem 4.3 of \cite{Boc08} or Proposition 5.1.9 and Theorem 5.3.1 in \cite{Gin07}.
	
	By definition, $\cF^{(\bL,\bb)}(\bL,\bb)= (\oplus_Y \cA_\bL \otimes_{\mathbb{K}} \langle Y \rangle \otimes_{\mathbb{K}} \cA_\bL,m_1^{\bb,\bb}(\cdot))$, which is of the following form: 
	\begin{equation}
		\bigoplus_{M_i} \cA_\bL e_i  \langle M_i \rangle  e_i \cA_\bL \xrightarrow{d_1} \bigoplus_{X_a} \cA_\bL e_{t(X_a)}  \langle X_a \rangle  e_{h(X_a)} \cA_\bL \xrightarrow{d_2} \bigoplus_{\bar{X_a}} \cA_\bL e_{h(X_a)}  \langle \bar{X_a} \rangle  e_{t(X_a)} \cA_\bL \xrightarrow{d_3} \bigoplus_{P_i} \cA_\bL e_{i}  \langle P_i \rangle  e_{i} \cA_\bL,
	\end{equation} where $M_i$ (resp. $P_i$) is the maximum (resp. minimum) point of $i$-th connected component in the normalization, $X_a$ is the degree 1 immersed sector, and $\bar{X_a}$ is its complement. 
	
	For simplicity, we will omit the sign of the differentials as before. The first differential can be computed using the fact that the maximum point is the unit element. Thus, $d_1(M_i)=m_1^{\bb,\bb}(M_i)= \sum_{h(X_a)=i} x_a X_a -\sum_{t(Y_a)=i} y_a Y_a.$ Moreover, observe that $$d_2(X_a)= \partial_{x_a} m_0^{\bb}= \partial_{x_a} \left ( \sum_{x_b} \partial_{x_b} \Phi_\bL \right ).$$ In other words, $d_2$ can be computed by the partial derivative of the obstruction equation $m_0^{\bb}$, which is the relations of the quiver with spacetime superpotential $\cA_\bL$. It remains to compute $d_3$. Notice that $m_1^{\bb}(\bar{X_a})= \sum_k m_k (\bb, \ldots, \bb, \bar{X_a},\bb,\ldots,\bb)$, which has degree 3. Thus $m_k (\bb, \ldots, \bb, \bar{X_a},\bb,\ldots,\bb)$ is a linear combination of $P_i$. Therefore, the coefficient of $P_i$ equals to $\langle m_k (\bb, \ldots, \bb, \bar{X_a},\bb,\ldots,\bb), M_i \rangle = \pm \langle m_k (M_i, \bb, \ldots, \bb, \bar{X_a},\bb,\ldots,\bb), \bb \rangle$ due to the cyclic structure. Since $M_i$ is the unit, this is zero unless $k=2$ and $i= t(X_a)$ or $i=h(X_a)$. Furthermore, $\langle m_2(\bb, \bar{X_a}), M_i \rangle = -\langle m_2(M_i,\bb), \bar{X_a} \rangle$, which is nonzero if and only if $m_2(M_i,\bb)=m_2(M_i,x_aX_a)=x_aX_a$. Therefore, $$d_3(\bar{X_a})= m_2(x_aX_a, \bar{X_a}) \pm m_2( \bar{X_a},x_a X_a)= x_a P_{t(X_a)} \pm x_a P_{h(X_a)}.$$ In summary, $\cF^{(\bL,\bb)}(\bL,\bb)$ is a complex of $\cA_\bL$-bimodules, which coincides with the extended cotangent complex. Hence, $\cF^{(\bL,\bb)}(\bL,\bb)$ gives a bimodule resolution of $\cA_\bL$ if and only if $\cA_\bL$ is a Calabi-Yau algebra. 
\end{proof}

\begin{rem}
	In particular, by Corollary \ref{cor: res}, the above proposition implies if $\cA_\bL$ is a 3-Calabi-Yau algebra, then the localized mirror functor $\cF^{(\bL,\bb)}$ induces a quasi-equivalence. Typical examples include the noncommutative crepant resolutions of conifold \cite{FHLY17} and $\C^3/\Z_3$ \cite{LNT23}.
\end{rem}

In addition, Bocklandt \cite{Boc08} introduced another definition of Calabi-Yau algebras: an algebra $A$ is a Calabi-Yau algebra if the derived category of finite-dimensional representations $D^b(\mathrm{Rep} \,A)$ is Calabi-Yau. Moreover, he classified such Calabi-Yau algebras of dimension less than and equal to 3. In these cases, the self-dual bimodule resolutions also coincide with the Lagrangian Floer complex $\cF^{(\bL,\bb)}(\bL,\bb)$.

Moreover, Keller \cite{Kel08} also showed for a minimal $A_\infty$-category $\cA$ with cyclic structure, the perfect derived category $\mathrm{perf}(\cA)$ is Calabi-Yau.
   
Motivated by the cyclic structure of the $A_\infty$-algebra, together with the existing results concerning Calabi-Yau algebras or Calabi-Yau categories, see for example \cite{Gin07,Boc08,Kel08,VdB15}, it is natural to expect the following:
\begin{Conjecture}
	Let $\bL$ be a compact, relatively spin, graded oriented immersed Lagrangian of dimension $n$ in a symplectic manifold $M$. Denote its Maurer-Cartan algebra (formed by degree-one elements) by $\cA_\bL$. Then the followings are equivalent:
	\begin{enumerate}
		\item $\cF^{(\bL,\bb)}(\bL,\bb)$ provides a projective resolution of $\cA_\bL$ as an $\cA_\bL$-bimodule.
		\item $\cA_\bL$ is an $n-$Calabi-Yau algebra.
	\end{enumerate}
\end{Conjecture}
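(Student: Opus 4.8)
The plan is to prove the two implications separately; (1)$\Rightarrow$(2) is formal and (2)$\Rightarrow$(1) is the substantial part. Since $\bL$ is graded we work $\Z$-gradedly, so the potential $W$ vanishes, $\CF^*((\bL,\bb),(\bL,\bb))$ is an honest (non-curved) dg object over $\cA_\bL$, and
\[
\cF^{(\bL,\bb)}(\bL,\bb)=\bigoplus_{Y}\cA_\bL\otimes_{\mathbb K}\langle Y\rangle\otimes_{\mathbb K}\cA_\bL,\qquad Y\text{ a homogeneous basis of }\CF^*(\bL,\bL),
\]
is a finite complex of finitely generated projective $\cA_\bL$-bimodules, with its part in Floer degree $k$ free on the degree-$k$ generators and differential $m_1^{\bb,\bb}$. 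It carries a canonical augmentation $\epsilon$, built from the units $M_i\in\CF^0(\bL,\bL)$ (sending $e_i\otimes e_i\mapsto e_i$), which is a chain map onto $\cA_\bL$; statement (1) is precisely that $\epsilon$ is a quasi-isomorphism.

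\emph{Self-duality and (1)$\Rightarrow$(2).} By Fukaya's cyclic symmetry \cite{Fuk10}, $\CF^*(\bL,\bL)$ carries a nondegenerate pairing of degree $-n$ (the Poincar\'e pairing) for which the $m_k$ are cyclic. Extending it $\cA_\bL$-linearly identifies, degree by degree, the free bimodule on a degree-$k$ generator with the bimodule dual of the free bimodule on the Poincar\'e-dual degree-$(n-k)$ generator, and cyclicity of the $m_k$ makes the differentials intertwine these identifications (up to a sign depending on the spin structure and orientation, which we suppress). This gives an isomorphism of complexes $\Hom_{\cA_\bL-\cA_\bL}(\cF^{(\bL,\bb)}(\bL,\bb),\cA_\bL\otimes\cA_\bL)\cong\cF^{(\bL,\bb)}(\bL,\bb)[-n]$, together with a co-augmentation dual to $\epsilon$ hitting the top-degree generators $P_i$ (genuine because $\langle M_i,P_i\rangle$ is a unit). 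This is the content of the Lemma stated just before Proposition \ref{prop: 3d}, now for arbitrary $n$; the argument is formal and degreewise. Given this, (1)$\Rightarrow$(2) is immediate: a resolution of $\cA_\bL$ by finitely many f.g.\ projective bimodules makes $\cA_\bL$ homologically smooth, and applying $\Hom_{\cA_\bL-\cA_\bL}(-,\cA_\bL\otimes\cA_\bL)$ yields, via the self-duality, $\R\Hom_{\cA_\bL-\cA_\bL}(\cA_\bL,\cA_\bL\otimes\cA_\bL)\simeq\cA_\bL[-n]$, i.e.\ $\cA_\bL$ is $n$-Calabi--Yau.

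\emph{(2)$\Rightarrow$(1).} Assume $\cA_\bL$ is $n$-Calabi--Yau (so in particular homologically smooth). The strategy is to recognize $\cF^{(\bL,\bb)}(\bL,\bb)$ as \emph{the} canonical self-dual bimodule resolution attached to a Calabi--Yau algebra. First, arguing as in the proof of Proposition \ref{prop: 3d} (where for $n=3$ only the products $m_k(\bb,\dots,\bb)$ enter), one shows that the differentials of $\cF^{(\bL,\bb)}(\bL,\bb)$ are the noncommutative/cyclic derivatives of the spacetime superpotential $\Phi_\bL=\sum_k\frac1{k+1}\langle m_k(\bb,\dots,\bb),\bb\rangle$ and of its higher-arity refinements; equivalently, the cyclic $A_\infty$-structure on the minimal model of $\CF^*(\bL,\bL)$ is a pre-Calabi--Yau structure of dimension $n$ on $\cA_\bL$, with $\cF^{(\bL,\bb)}(\bL,\bb)$ the associated bimodule complex. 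Second, one invokes a Van den Bergh--type characterization \cite{VdB15} — in the complete/pseudocompact form, applicable since $\cA_\bL$ is complete for the Novikov/arrow filtration — to the effect that a homologically smooth algebra is $n$-Calabi--Yau if and only if it is a superpotential algebra, i.e.\ admits a self-dual bimodule resolution of exactly this shape, such a resolution being essentially unique. Matching this resolution with $\cF^{(\bL,\bb)}(\bL,\bb)$ via the cyclic structure then forces $\epsilon$ to be a quasi-isomorphism, which is (1). For $n\le 3$ this is already known (Theorem \ref{thm:qiso}, Proposition \ref{prop: 3d}).

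The main obstacle is precisely the identification in the last step when $n\ge 4$. One must (i) upgrade the ``superpotential $\Rightarrow$ resolution'' dictionary — available in clean form only for $n\le 3$ (Ginzburg \cite{Gin07}, Bocklandt \cite{Boc08}) — to one that accommodates a genuine cyclic $A_\infty$-structure (or pre-Calabi--Yau structure) rather than a single superpotential polynomial; and (ii) bridge the gap that $\cA_\bL$ is the degree-zero Maurer--Cartan quotient of a path algebra, not the $H^0$ of an $A_\infty$-algebra, so that the possible non-minimality of $\cF^{(\bL,\bb)}(\bL,\bb)$ and the higher $m_k$'s are correctly reflected algebraically. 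Controlling this — and verifying that the Floer input meets the algebraic hypotheses, namely honest nondegeneracy of the cyclic pairing over $\cA_\bL$ (requiring a cyclically minimal model) and sufficiency of completeness — is where the work lies, and at present it may well require establishing the needed pre-Calabi--Yau/Van den Bergh statement in the precise form demanded here.
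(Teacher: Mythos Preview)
The statement you are trying to prove is a \emph{Conjecture} in the paper, not a theorem: the paper does not give a proof of the equivalence for general $n$. What the paper does establish is the implication (1)$\Rightarrow$(2) for arbitrary $n$ (the unnamed Lemma immediately preceding Proposition~\ref{prop: 3d}), via exactly the self-duality argument you give: cyclic symmetry of $\CF^*(\bL,\bL)$ makes $\cF^{(\bL,\bb)}(\bL,\bb)$ a finite self-dual complex of projective bimodules, so if it resolves $\cA_\bL$ then $\cA_\bL$ is $n$-Calabi--Yau. Your treatment of this direction matches the paper's and is correct.

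For (2)$\Rightarrow$(1), there is no proof in the paper to compare against; the paper only records it for $n\le 3$ (Theorem~\ref{thm:qiso} and Proposition~\ref{prop: 3d}, together with Bocklandt's results) and explicitly leaves the general case open, noting in the subsequent Remark that one should pass to extended mirror functors with dg targets and that this is ongoing work. Your outline is a reasonable research program, and you are candid that the crux---a Van den Bergh--type theorem identifying $n$-Calabi--Yau algebras with algebras admitting a self-dual resolution of precisely the shape $\cF^{(\bL,\bb)}(\bL,\bb)$, in a form compatible with a cyclic $A_\infty$ (pre-Calabi--Yau) structure and with the Novikov/arrow-filtration completeness---is not available in the literature in the required generality. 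That is exactly the gap: the step ``matching this resolution with $\cF^{(\bL,\bb)}(\bL,\bb)$ via the cyclic structure then forces $\epsilon$ to be a quasi-isomorphism'' presupposes both a uniqueness statement for such self-dual resolutions and a dictionary between pre-CY structures and bimodule resolutions for $n\ge 4$, neither of which you supply. So your proposal does not close the conjecture; it accurately isolates where the difficulty lies, which is also where the paper leaves it.
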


If this conjecture holds, the localized mirror functor $\cF^{(\bL,\bb)}$ inherits fully-faithfulness properties whenever the localized mirror is a Calabi–Yau algebra.

\begin{rem}
	One may observe that Theorem \ref{thm:qiso} provides further evidence in support of the conjecture. In fact, under the definition introduced in \cite{Boc08}, the conjecture holds in dimensions less than or equal to 3, according to Theorem \ref{thm:qiso} above and Theorems 3.2 and 4.3 in \cite{Boc08}.
	
	However, not all noncommutative deformation spaces arising from Lagrangians are $n-$Calabi-Yau algebras. For example, consider the plumbing space of $T^*\bS^2$ according to the $ADE$ Dynkin diagram. Then the noncommutative deformation space of the zero section is not $2$-Calabi-Yau.
	
	In general, one shall consider the extended mirror functors \cite{CHL21, Hon23} to dg modules over a differential graded algebra. This is a part of an ongoing joint work with Hansol Hong.
\end{rem}

	\bibliography{mybib}{}
	\bibliographystyle{alpha} 
\end{document}